\theoremstyle{plain}
\newtheorem{teo}{Theorem}[section]
\newtheorem{lem}[teo]{Lemma}
\newtheorem{cor}[teo]{Corollary}
\newtheorem{prop}[teo]{Proposition}
\newtheorem*{teo*}{Theorem}
\theoremstyle{remark}
\newtheorem{oss}[teo]{Remark}
\theoremstyle{definition}
\newtheorem{defi}[teo]{Definition}
\numberwithin{equation}{section}
\newcommand{\N}{\mathbb{N}}
\newcommand{\R}{\mathbb{R}}
\newcommand{\C}{\mathbb{C}}
\newcommand{\Sf}{\mathbb{S}}
\renewcommand{\a}{\alpha}
\renewcommand{\b}{\beta}
\renewcommand{\l}{\lambda}
\renewcommand{\L}{\Lambda}
\renewcommand{\d}{\delta}
\renewcommand{\c}{\gamma}
\newcommand{\s}{\sigma}
\newcommand{\A}{\mathbb{A}}
\newcommand{\LL}{\mathbb{L}}
\newcommand{\Om}{\Omega}
\newcommand{\eps}{\varepsilon}
\newcommand{\diw}{\mathrm{div}}
\renewcommand{\phi}{\varphi}
\title{The Classical Obstacle problem with coefficients in fractional Sobolev spaces}
\author{Francesco Geraci}
\date{}
\begin{document}
\maketitle

\begin{abstract}
We prove quasi-monotonicity formulae for classical obstacle-type problems with
quadratic energies with coefficients in fractional Sobolev spaces, and a linear term
with a Dini-type continuity property. 
These formulae are used to obtain the regularity of free-boundary points following the approaches by
Caffarelli, Monneau and Weiss.
\end{abstract}

\section{Introduction}
The motivation for studying obstacle problems has roots in many applications. There are examples in physics and in mechanics,
and many prime examples can be found in \cite{Chipot,Fried,KO,KS,Rod}.
The classical obstacle problem consists in finding the minimizer of Dirichlet energy
in a domain $\Om$, among all functions $v$, with fixed boundary data, constrained to lie above a given obstacle $\psi$. 
Active areas of research related to this problem include both studying properties of minimizers and analyzing 
the regularity of the boundary of the coincidence set between minimizer and obstacle.

In the '60s the obstacle problem was introduced within the study of variational inequalities. 
In the last fifty years much effort has been made to understand of the problem, a wide variety of issues has been 
analyzed and new mathematical ideas have been introduced. Caffarelli in \cite{Caf80} introduced the so-called method of 
\emph{blow-up}, imported from geometric measure theory for the study of minimal surfaces, to prove some local properties of 
solutions. Alt-Caffarelli-Friedman \cite{ACF}, Weiss \cite{Weiss} and Monneau \cite{Monneau} introduced monotonicity formulae 
to show the blow-up property and to obtain the free-boundary regularity in various problems. 
See \cite{Caf77,CS05,Fried,KS,PSU,Rod} for more detailed references and historical developments.

Recently many authors have improved classical results replacing the Dirichlet energy by a more general variational
functional and weakening the regularity of the obstacle (we can see  \cite{CFS04,FS07,Foc10,Foc12,FGS,MP11,Mon09,Wang00,Wang02}). 
In this context, we aim to minimize the following energy
\begin{equation}
 \mathcal{E}(v):=\int_\Om \big(\langle \A(x)\nabla v(x),\nabla v(x)\rangle +2f(x)v(x)\big)\,dx,
\end{equation}
among all positive functions with fixed boundary data, where
$\Omega\subset \R^n$ is a smooth, bounded and open set, $n\geq 2$, $\mathbb{A}:\Omega\to \R^{n\times n}$ is 
a matrix-valued field and $f:\Omega\to \R$ is a function satisfying:
\begin{itemize}
\item[$(H1)$]$\mathbb{A}\in W^{1+s,p}(\Omega;\R^{n\times n})$ with $s>\frac{1}{p}\,$ and $\,p>\frac{n^2}{n(1+s)-1} \wedge n$;
where the symbol $\wedge$ indicates the minimum of the surrounding quantities;
\item[$(H2)$] $\mathbb{A}(x)=\left(a_{ij}(x)\right)_{i,j=1,\dots,n}$ symmetric, continuous and coercive, 
  that is $a_{ij}=a_{ji}$ $\mathcal{L}^n$ a.e. $\Om$  and for some $\L\geq 1$ i.e.
  \begin{equation}\label{A coerc cont}
   \Lambda^{-1}|\xi|^2\leq \langle \mathbb{A}(x)\xi, \xi\rangle \leq \Lambda|\xi|^2 \qquad\qquad \mathcal{L}^n \,\,\textrm{a.e.}\,\,\Om, \,\,\forall \xi\in \R^n;
  \end{equation}
 \item[$(H3)$] $f$ Dini-continuous, that is $\omega(t)=\sup_{|x-y|\leq t} |f(x)-f(y)|$ modulus of continuity $f$ satisfying the following
 integrability condition
 \begin{equation}\label{e:Dini continuity}
  \int_0^1 \frac{\omega(t)}{t}\,dt < \infty,
 \end{equation}
and exists $c_0>0$ such that $f\geq c_0$.
 \item[$(H4)$] $f$ satisfies a double Dini-type condition:  
 let $\omega(t)=\sup_{|x-y|\leq t} |f(x)-f(y)|$ be the modulus of continuity of $f$ and set $a\geq 1$ it holds 
 the following condition of integrability
  \begin{equation}\label{H3'}
    \int_0^1 \frac{\omega(r)}{r}\,|\log r|^a\, dr < \infty,
  \end{equation}
  and exists $c_0>0$ such that $f\geq c_0$.
\end{itemize}

In Remark \ref{choice (H1)} we will justify the choice of $p$ in hypothesis $(H1)$.
We note that we are reduced to the $0$ obstacle case, so $f=-\diw(\A\nabla \psi)$.

In the paper we prove that the unique minimizer, which we will indicate as $u$, 
is the solution of an elliptic differential equation in divergence form. With classical PDE regularity theory 
we deduce that $u$ and $\nabla u$ are H\"{o}lder continuous and $\nabla^2 u$ is integrable.
To prove the regularity of the free-boundary $\Gamma_u=\partial \{u=0\}\cap \Omega$, we apply the method of blow-up 
introduced by Caffarelli \cite{Caf80}.
For all $x_0$ points of free-boundary $\Gamma_u=\partial \{u=0\}\cap \Omega$ we introduce a sequence of rescaled functions and, 
through a $C^{1,\gamma}$ estimate of rescaled functions (for a suitable $\gamma\in (0,1)$), 
we prove the existence of sequence limits; these limits are called blow-ups. 
To classify the blow-ups and to prove the uniqueness of the sequence limit, for all points of $\Gamma_u$, we introduce a technical tool: the 
quasi-monotonicity formulae. 
To simplify the notation we introduce, for all $x_0\in\Gamma_u$ an opportune change of variable for which, 
without loss of generality, we can suppose:
\begin{equation}\label{abuso_notazione}
  x_0=\underline{0}\in \Gamma_u,\qquad \A(\underline{0})=I_n,\qquad f(\underline{0})=1.
\end{equation}
As in \cite{FGS} we introduce the auxiliary energy ``à la Weiss'' 
\begin{equation*}
 \Phi(r):=\int_{B_1}\big(\langle \A(rx)\nabla u_r(x),\nabla u_r(x)\rangle +2f(rx)u_r(x)\big)\,dx + 2\int_{\partial B_1}\left\langle \A(rx)\frac{x}{|x|},\frac{x}{|x|}\right\rangle u_r^2(x) \,d\mathcal{H}^{n-1}(x),
\end{equation*}
and prove the main results of the paper:

\begin{teo}[Weiss' quasi-monotonicity formula]\label{Weiss}
 Assume that $(H1)$-$(H3)$ and \eqref{abuso_notazione} hold. There exist nonnegative constants $\bar{C_3}$ and $C_4$ independent
 from $r$ such that the function
 \begin{equation*}
  r\mapsto \Phi(r)\,e^{\bar{C_3}r^{1-\frac{n}{\Theta}}} + C_4\,\int_0^r \left(t^{-\frac{n}{\Theta}}+\frac{\omega(t)}{t}\right)e^{\bar{C_3}t^{1-\frac{n}{\Theta}}}\,dt
 \end{equation*}
with the constant $\Theta$ given in equation \eqref{B}, is nondecreasing on the interval $(0,\frac{1}{2}\mathrm{dist}(\underline{0},\partial\Om)\wedge 1)$.

More precisely, the following estimate holds true for $\mathcal{L}^1$-a.e. $r$ in such an interval:
\begin{equation}\label{dis monotonia Weiss}
 \begin{split}
 \frac{d}{dr}\bigg(\Phi(r)\,e^{\bar{C_3}r^{1-\frac{n}{\Theta}}} &+ C_4\,\int_0^r \left(t^{-\frac{n}{\Theta}}+\frac{\omega(t)}{t}\right)e^{\bar{C_3}t^{1-\frac{n}{\Theta}}}\,dt\bigg)\\
 &\geq \frac{2e^{\bar{C_3}r^{1-\frac{n}{\Theta}}}}{r^{n+2}}\int_{\partial B_r}\mu \Big(\langle \mu^{-1} \A\nu, \nabla u \rangle - 2\frac{u}{r} \Big)^2\,d\mathcal{H}^{n-1}.
 \end{split}
\end{equation}
In particular, the limit $\Phi(0^+):=\lim_{r\to 0^+}\Phi(r)$  exists and it is finite and there exists a constant $c>0$ such that
\begin{equation}\label{weiss stima Phir}
 \begin{split}
 &\Phi(r)-\Phi(0^+)\\
 &\phantom{A}\geq \Phi(r)\,e^{\bar{C_3}r^{1-\frac{n}{\Theta}}} + C_4\,\int_0^r \left(t^{-\frac{n}{\Theta}}+\frac{\omega(t)}{t}\right)e^{\bar{C_3}t^{1-\frac{n}{\Theta}}}\,dt -\Phi(0^+) - c\,\left(r^{1-\frac{n}{\Theta}}+\int_0^r\frac{\omega(t)}{t}\,dt\right).
 \end{split}
\end{equation}
\end{teo}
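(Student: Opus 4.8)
The plan is to compute $\frac{d}{dr}\Phi(r)$ explicitly, exhibit a perfect-square positive term plus error terms controlled by $r^{-n/\Theta}+\omega(r)/r$, and then absorb the errors via the integrating factor $e^{\bar C_3 r^{1-n/\Theta}}$ together with the additive correction. First I would record the scaling identities for $u_r(x)=u(rx)/r^2$ (the natural $2$-homogeneous rescaling for the obstacle problem): $\nabla u_r(x)=\nabla u(rx)/r$, and the Euler–Lagrange equation $\operatorname{div}(\A(rx)\nabla u_r)=f(rx)$ holds on $\{u_r>0\}$ in the rescaled domain, with $u_r\ge 0$ and $u_r\,f(rx)=u_r\,\operatorname{div}(\A(rx)\nabla u_r)$ a.e. Differentiating the bulk term of $\Phi$ in $r$ produces three kinds of contributions: (i) the derivative hitting the explicit $r$-dependence inside $\A(rx)$ and $f(rx)$, which is where $(H1)$ enters — by Sobolev embedding $\A\in W^{1+s,p}$ with the stated range of $p$ gives $\nabla\A\in L^q$ for a $q$ large enough that $\int_{B_r}|x||\nabla\A(rx)||\nabla u_r|^2$ and analogous terms are $O(r^{-n/\Theta})$ after Hölder, with $\Theta$ the exponent fixed in \eqref{B}; (ii) the derivative hitting $u_r$, handled through $\frac{d}{dr}u_r(x)=\frac{1}{r}\big(x\cdot\nabla u_r(x)-2u_r(x)\big)$ and the equation, which after integrating by parts and using the boundary term in $\Phi$ is precisely what generates the square $\big(\langle \mu^{-1}\A\nu,\nabla u\rangle - 2u/r\big)^2$ on $\partial B_r$; and (iii) the linear term $2\int f(rx)u_r$, where I would replace $f(rx)$ by $f(\underline 0)=1$ at the cost of $\omega(r)\,\|u_r\|_{L^1(B_1)}$, and the remaining constant-coefficient linear term is handled exactly as in the Dirichlet case.

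The cleanest route for step (ii) is to pass to the $\partial B_r$ formulation directly: write $\Phi(r)$ after unscaling as $r^{-n}\int_{B_r}(\langle\A\nabla u,\nabla u\rangle+2fu) + 2r^{-n-1}\int_{\partial B_r}\langle\A\tfrac{x}{|x|},\tfrac{x}{|x|}\rangle u^2$, differentiate using the coarea formula and the divergence theorem, and group terms. The interior integration by parts $\int_{B_r}\langle\A\nabla u,\nabla u\rangle = \int_{\partial B_r}u\langle\A\nabla u,\nu\rangle - \int_{B_r}u\operatorname{div}(\A\nabla u) = \int_{\partial B_r}u\langle\A\nabla u,\nu\rangle - \int_{B_r}fu$ is the key simplification; the factor $\mu$ in \eqref{dis monotonia Weiss} is $\langle\A\tfrac{x}{|x|},\tfrac{x}{|x|}\rangle$ and appears from completing the square in the $\partial B_r$ terms. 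After this, $\frac{d}{dr}\Phi(r)\ge \frac{2}{r^{n+2}}\int_{\partial B_r}\mu(\langle\mu^{-1}\A\nu,\nabla u\rangle-2u/r)^2 - C\big(r^{-n/\Theta-1}\,?\big)$ — more precisely the error is bounded by $C(r^{-n/\Theta}+\omega(r)/r)\,$ times bounded quantities, using that $\|u_r\|_{C^{1,\gamma}(B_1)}$ is uniformly bounded by the $C^{1,\gamma}$ estimate invoked earlier, and that $\Phi(r)$ itself is bounded (which one gets a priori from the same estimate, or bootstraps).

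Once the differential inequality $\frac{d}{dr}\Phi(r)\ge (\text{square}) - C_4(r^{-n/\Theta}+\omega(r)/r)$ is in hand, the integrating-factor step is routine: set $g(r)=\Phi(r)e^{\bar C_3 r^{1-n/\Theta}}+C_4\int_0^r(t^{-n/\Theta}+\omega(t)/t)e^{\bar C_3 t^{1-n/\Theta}}\,dt$, compute $g'(r)=e^{\bar C_3 r^{1-n/\Theta}}\big(\Phi'(r)+\bar C_3(1-\tfrac n\Theta)r^{-n/\Theta}\Phi(r)+C_4(r^{-n/\Theta}+\omega(r)/r)\big)$, and choose $\bar C_3$ so that $\bar C_3(1-\tfrac n\Theta)r^{-n/\Theta}\Phi(r)$ dominates any residual negative term proportional to $r^{-n/\Theta}$ — here one uses $\Phi(r)\ge 0$, which follows from coercivity $(H2)$ since both the bulk quadratic part and the boundary part are nonnegative and $f\ge c_0>0$ makes the linear term nonnegative on $\{u\ge 0\}$. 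Then $g$ is nondecreasing, so $g(0^+)$ exists; since $\int_0^1\omega(t)/t\,dt<\infty$ by $(H3)$ and $1-n/\Theta>0$, the additive integral term is $O(r^{1-n/\Theta}+\int_0^r\omega(t)/t\,dt)$, and $e^{\bar C_3 r^{1-n/\Theta}}=1+O(r^{1-n/\Theta})$ with $\Phi$ bounded, which yields both the existence and finiteness of $\Phi(0^+)$ and the estimate \eqref{weiss stima Phir} after rearranging $\Phi(r)-\Phi(0^+)\ge g(r)-g(0^+) - [\text{the }O(\cdot)\text{ discrepancies}]\ge -c(r^{1-n/\Theta}+\int_0^r\omega(t)/t\,dt)$, using monotonicity $g(r)\ge g(0^+)$.

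The main obstacle is step (i): making the error from the $r$-dependence of $\A$ genuinely integrable with the exponent $n/\Theta<1$, i.e. verifying that the range of $p$ in $(H1)$ is exactly what is needed so that the fractional Sobolev regularity of $\A$, combined with the $C^{1,\gamma}$ bound on $u_r$ and the fractional Poincaré/Hölder embedding, produces a bound of the form $C r^{-n/\Theta}$ rather than something non-integrable near $0$; the bookkeeping of which norm of $\nabla\A$ on the shrinking ball $B_r$ scales how, and the precise definition of $\Theta$ in \eqref{B}, is the delicate point and the reason the hypotheses are stated the way they are. Everything else — the square, the integrating factor, the passage to $\Phi(0^+)$ — is the standard Weiss/Monneau machinery adapted to variable coefficients, following \cite{FGS}.
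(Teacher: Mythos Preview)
Your overall strategy---differentiate $\Phi$, extract the perfect square on $\partial B_r$, control the remaining terms by $C(r^{-n/\Theta}+\omega(r)/r)$ using the uniform $C^{1,\gamma}$ bound on $u_r$, and close with an integrating factor---is the paper's approach. But your handling of the integrating factor contains a real error. The Weiss energy is $\Phi(r)=r^{-n-2}\mathcal{E}(r)-2r^{-n-3}\mathscr{H}(r)$ with a \emph{minus} in front of the boundary piece (your formula and the display in the introduction carry a sign typo); with the correct sign $\Phi(r)\ge 0$ is simply false, so the step ``choose $\bar C_3$ so that $\bar C_3(1-\tfrac n\Theta)r^{-n/\Theta}\Phi(r)$ dominates residual negatives --- here one uses $\Phi(r)\ge 0$'' does not work. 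The correct mechanism is different: the errors $\eps(r)$ and $h(r)$ in Propositions~\ref{E'} and~\ref{H'} are bounded by $C\mathcal{E}(r)r^{-n/p^*}$ and $C\mathscr{H}(r)r^{-n/\Theta}$, which (after using $\mathscr{H}(r)/r^{n+3}\le C$) combine to give
\[
\Phi'(r)+C_3\,\Phi(r)\,r^{-n/\Theta}+C_4\Big(r^{-n/\Theta}+\tfrac{\omega(r)}{r}\Big)\ \ge\ \text{(square)},
\]
and the factor $e^{\bar C_3 r^{1-n/\Theta}}$ with $\bar C_3(1-\tfrac n\Theta)=C_3$ absorbs the $\Phi$-proportional term \emph{identically}, with no sign hypothesis on $\Phi$. (Your alternative of bounding $\Phi$ itself by a constant is also legitimate and would yield the statement with $\bar C_3=0$, but then the integrating factor plays no role and you should not reinsert it the way you do.)

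A second gap: you locate the delicate estimate entirely in volume integrals of $|\nabla\A|$. In fact the sharper constraint, and the reason $\Theta$ is defined as in \eqref{B}, comes from the \emph{boundary} term. Differentiating $\mathscr{H}(r)$ (after rewriting it as a volume integral via the divergence theorem and then using the coarea formula) produces $\tfrac{1}{r}\int_{\partial B_r}u^2\,\gamma_0(\operatorname{div}\A)\cdot x\,d\mathcal{H}^{n-1}$, which requires the trace of $\operatorname{div}\A\in W^{s,p}$ on spheres. The trace theorem for fractional Sobolev spaces (needing $s>1/p$) together with the embedding estimate \eqref{stima imm q} is exactly what yields $|h(r)|\le C\,\mathscr{H}(r)\,r^{-n/\Theta}$ with $\Theta>n$ under $(H1)$; see Proposition~\ref{H'} and Remark~\ref{choice (H1)}. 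Your sketch does not account for this step, and without it the exponent $\Theta$ has no provenance.
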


\begin{teo}[Monneau's quasi-monotonicity formula]\label{Monneau}
 Assume $(H1)$, $(H2)$ and $(H4)$ with $a\geq 1$ and \eqref{abuso_notazione}. 
 Let $u$ be the minimizer of $\mathcal{E}$ on $K$, with
 $\underline{0}\in Sing(u)$ (i.e. \eqref{pto sing1} holds), and $v$ be a $2$-homogeneous, positive, polynomial function, 
 solution of $\Delta v=1$ on $\R^n$. Then, there exists a positive constant 
 $C_5=C_5(\l,\|\A\|_{W^{s,p}})$  such that
 \begin{equation}\label{funz monotona Monneau}
  r \longmapsto \int_{\partial B_1} (u_r-v)^2\,d\mathcal{H}^{n-1} + C_5\,\left(r^{1-\frac{n}{\Theta}}+\int_0^r\frac{\omega(t)}{t}\,dt + \int_0^r\frac{dt}{t}\int_0^t\frac{\omega(s)}{s}\,ds\right)
 \end{equation}
is nondecreasing on  $(0,\frac{1}{2}\mathrm{dist}(\underline{0},\partial\Om)\wedge 1)$. More precisely, $\mathcal{L}^1$-a.e. 
on such an interval
\begin{equation}\label{dis monotonia Monneau}
\begin{split}
 \frac{d}{dr}\bigg(\int_{\partial B_1}(u_r-v)^2\,&d\mathcal{H}^{n-1} + C_5\,\left(r^{1-\frac{n}{\Theta}}+\int_0^r\frac{\omega(t)}{t}\,dt + \int_0^r\frac{dt}{t}\int_0^t\frac{\omega(s)}{s}\,ds \right)\bigg)\\
 &\geq \frac{2}{r}\bigg(e^{C_3\,r^{1-\frac{n}{\Theta}}}\Phi(r) + C_4 \int_0^r e^{c_3t^{1-\frac{n}{\Theta}}}\left(t^{-\frac{n}{\Theta}}+\frac{\omega(t)}{t}\right)\,dt-\Psi_v(1)\bigg).
 \end{split}
\end{equation}
where $\Psi_v(1)=\int_{B_1}\big(|\nabla v|^2 + 2v\big)\,dx -2\int_{\partial B_1} v^2\,d\mathcal{H}^{n-1}$
\end{teo}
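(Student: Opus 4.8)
The plan is to establish first the pointwise inequality \eqref{dis monotonia Monneau}; the monotonicity then follows, since the function in \eqref{funz monotona Monneau} is absolutely continuous on compact subintervals of $(0,\tfrac12\mathrm{dist}(\underline{0},\partial\Om)\wedge1)$ and the right-hand side of \eqref{dis monotonia Monneau} is nonnegative. For the latter: by Theorem \ref{Weiss} the function $Q(r):=e^{\bar{C_3}r^{1-n/\Theta}}\Phi(r)+C_4\int_0^r e^{\bar{C_3}t^{1-n/\Theta}}\big(t^{-n/\Theta}+\tfrac{\omega(t)}{t}\big)\,dt$ is nondecreasing with $\lim_{r\to0^+}Q(r)=\Phi(0^+)$, hence $Q(r)\ge\Phi(0^+)$; and because $\underline{0}\in Sing(u)$, every blow-up $w_0$ of $u$ at $\underline{0}$ is a nonzero $2$-homogeneous solution of $\Delta w_0=1$ on $\R^n$ (the homogeneity being the equality case in Theorem \ref{Weiss}, and the equation being $\Delta w_0=1$ rather than $\chi_{\{w_0>0\}}$ because the coincidence set has vanishing Lebesgue density at a singular point), so $\Phi(0^+)=\Psi_{w_0}(1)$. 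An integration by parts using only $2$-homogeneity and $\Delta v=1$ gives $\Psi_v(1)=\int_{B_1}v=\tfrac{|B_1|}{2(n+2)}$ for any admissible $v$; therefore $\Phi(0^+)=\Psi_v(1)$ and $Q(r)-\Psi_v(1)\ge0$.

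To prove \eqref{dis monotonia Monneau}, set $w:=u_r-v$ and $\A_r(x):=\A(rx)$, $f_r(x):=f(rx)$. Since $v$ is independent of $r$ and $r\mapsto u_r$ is locally Lipschitz into $C^1(\overline{B_1})$ with $\partial_r u_r=\tfrac1r(x\cdot\nabla u_r-2u_r)$, differentiating \eqref{funz monotona Monneau} under the integral sign and using that $\partial_\nu v=2v$ on $\partial B_1$ ($2$-homogeneity of $v$) yields, for a.e.\ $r$,
\[
 \frac r2\,\frac{d}{dr}\int_{\partial B_1}(u_r-v)^2\,d\mathcal H^{n-1}=\int_{\partial B_1}w\,\partial_\nu w\,d\mathcal H^{n-1}-2\int_{\partial B_1}w^2\,d\mathcal H^{n-1}.
\]
I then integrate by parts with the weight $\A_r$: splitting $\partial_\nu w=\langle\A_r\nabla w,\nu\rangle+\langle(I-\A_r)\nabla w,\nu\rangle$, applying the divergence theorem (legitimate because $\nabla^2 u$ is integrable, so $\diw(\A_r\nabla w)$ is an $L^1$ function), and inserting the rescaled Euler--Lagrange equation $\diw(\A_r\nabla u_r)=f_r\,\chi_{\{u_r>0\}}$ in $B_1$ together with $\diw(\A_r\nabla v)=1+\diw((\A_r-I)\nabla v)$, one obtains — via a computation that in the model case $\A_r\equiv I$, $f_r\equiv 1$ reduces to the classical Monneau identity $\int_{\partial B_1}w\,\partial_\nu w-2\int_{\partial B_1}w^2=\Phi(r)-\Psi_v(1)+\int_{B_1\cap\{u_r=0\}}v$ —
\[
 \frac r2\,\frac{d}{dr}\int_{\partial B_1}(u_r-v)^2\,d\mathcal H^{n-1}=\Phi(r)-\Psi_v(1)+\int_{B_1\cap\{u_r=0\}}v\,f_r+\mathcal R(r),
\]
where $\mathcal R(r)$ gathers all terms carrying a factor $\A_r-I$ or $f_r-1$. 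Each such term is a bounded quantity (the rescalings $u_r$ are bounded in $C^{1,\gamma}(\overline{B_1})$ uniformly in $r$ by the rescaled estimate, $v$ has universally bounded coefficients since $0\le v$, $\Delta v=1$, and $\Phi$ is bounded) multiplied by $\|\A_r-I\|_{L^\infty(B_1)}$ or $\|f_r-1\|_{L^\infty(B_1)}$; since $(H1)$ forces $\A\in C^{0,1-n/\Theta}$ (cf.\ Remark \ref{choice (H1)}) and $\A(\underline{0})=I$, one has $\|\A_r-I\|_{L^\infty(B_1)}\le C\,r^{1-n/\Theta}$, while $(H4)$ and $f(\underline{0})=1$ give $\|f_r-1\|_{L^\infty(B_1)}\le\omega(r)$; hence $|\mathcal R(r)|\le C(r^{1-n/\Theta}+\omega(r))$. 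As $v\ge0$ and $f_r\ge c_0>0$, the term $\int_{B_1\cap\{u_r=0\}}v\,f_r$ is nonnegative and is discarded, giving $\tfrac r2\tfrac{d}{dr}\int_{\partial B_1}(u_r-v)^2\ge\Phi(r)-\Psi_v(1)-C(r^{1-n/\Theta}+\omega(r))$.

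It remains to replace $\Phi(r)$ by $Q(r)$. Since $0<n/\Theta<1$, $\Phi$ is bounded, $\omega$ is Dini, and $\int_0^r t^{-n/\Theta}\,dt\le C\,r^{1-n/\Theta}$, one has $0\le Q(r)-\Phi(r)\le C\big(r^{1-n/\Theta}+\int_0^r\tfrac{\omega(t)}{t}\,dt\big)$, so, dividing by $r/2$,
\[
 \frac{d}{dr}\int_{\partial B_1}(u_r-v)^2\,d\mathcal H^{n-1}+\frac Cr\Big(r^{1-n/\Theta}+\omega(r)+\int_0^r\tfrac{\omega(t)}{t}\,dt\Big)\ \ge\ \frac2r\big(Q(r)-\Psi_v(1)\big).
\]
Now $\tfrac1r r^{1-n/\Theta}=(1-n/\Theta)^{-1}\tfrac{d}{dr}r^{1-n/\Theta}$, $\ \tfrac{\omega(r)}{r}=\tfrac{d}{dr}\int_0^r\tfrac{\omega(t)}{t}\,dt$, and $\ \tfrac1r\int_0^r\tfrac{\omega(t)}{t}\,dt=\tfrac{d}{dr}\int_0^r\tfrac{dt}{t}\int_0^t\tfrac{\omega(s)}{s}\,ds$, so the left-hand correction is the derivative of a constant multiple of $r^{1-n/\Theta}+\int_0^r\tfrac{\omega(t)}{t}\,dt+\int_0^r\tfrac{dt}{t}\int_0^t\tfrac{\omega(s)}{s}\,ds$; taking $C_5$ larger than all the constants above turns the inequality into \eqref{dis monotonia Monneau}. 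The last primitive is finite on $(0,1)$ precisely because, by Fubini, $\int_0^1\tfrac{dt}{t}\int_0^t\tfrac{\omega(s)}{s}\,ds=\int_0^1\tfrac{\omega(s)}{s}|\log s|\,ds<\infty$ by $(H4)$ with $a\ge1$ — this is exactly where the double Dini condition enters, and why $(H3)$, which suffices for Theorem \ref{Weiss}, is not enough here.

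The delicate step is the weighted integration by parts: one must keep track of every interior and boundary term generated by passing from $I$ to $\A_r$ and from $1$ to $f_r$, verify that each is genuinely $O(r^{1-n/\Theta})$ or $O(\omega(r))$ — this relies on the uniform rescaled $C^{1,\gamma}$ estimate, on the integrability of $\nabla^2 u$ (so the divergence theorem applies to $\A_r\nabla w$), and on the Hölder continuity of $\A$ coming from $(H1)$ — and check that the principal part reassembles exactly into $\Phi(r)-\Psi_v(1)$, i.e.\ into the classical Monneau identity. The remaining ingredients (local Lipschitz dependence of $r\mapsto u_r$, boundedness of $\Phi$, $2$-homogeneity of blow-ups, the elementary identity $\Psi_v(1)=\tfrac{|B_1|}{2(n+2)}$) are standard or already available from Theorem \ref{Weiss}.
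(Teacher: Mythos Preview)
Your proposal is correct and follows essentially the same route as the paper's proof: differentiate the boundary $L^2$-distance, use the $2$-homogeneity of $v$ to rewrite the derivative as $\tfrac{2}{r}\int_{\partial B_1}w_r(\partial_\nu u_r-2u_r)$, integrate by parts with the variable-coefficient operator to extract $\Phi(r)-\Psi_v(1)$ plus freezing errors of order $r^{1-n/p^*}+\omega(r)$, discard the nonnegative coincidence-set term, and finally replace $\Phi(r)$ by the Weiss-corrected quantity via \eqref{weiss stima Phir}. The paper carries out the integration by parts explicitly (see \eqref{monneau1}--\eqref{monneau2}), while you sketch it and flag it as the delicate step; your identification of where each error term comes from and why $(H4)$ with $a\ge1$ is needed (the Fubini computation for the iterated Dini integral) is accurate. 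Two minor remarks: the H\"older exponent for $\A$ is actually $1-n/p^*\ge 1-n/\Theta$ (it comes from Sobolev embedding, not from Remark~\ref{choice (H1)}), so your bound $\|\A_r-I\|_{L^\infty}\le Cr^{1-n/\Theta}$ is valid but not sharp; and the identity $\Phi(0^+)=\Psi_v(1)$ is in fact part of the hypothesis via \eqref{pto sing1}, so your blow-up argument, while correct, is not strictly required.
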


These theorems
generalize the results of Weiss \cite{Weiss} and Monneau \cite{Monneau}. 
The Weiss monotonicity formula was proven by Weiss within \cite{Weiss} for the case where $\A\equiv I_n$ and $f\equiv 1$; 
in the same paper he proved the celebrated epiperimetric inequality (see Theorem \ref{epip Weiss}) and gave a new way 
of approaching the problem of the regularity for the free-boundary. 
In \cite{PS07} Petrosyan and Shahgholian proved the monotonicity formula for $\A\equiv I_n$ and $f$ with a double Dini 
modulus of continuity (but for obstacle problems with no sign condition on the solution). 
Lederman and Wolanski \cite{LW07} provided a local monotonicity formula for the perturbated problem to achieve the regularity of
Bernoulli and Stefan free-boundary problems, while Ma, Song and Zhao \cite{MSZ10} showed the formula for elliptic and parabolic 
systems in the case in which $\A\equiv I_n$ and the equations present a first order nonlinear term. 
Garofalo and Petrosyan in \cite{GP09} proved the formula for the thin obstacle problem with a smooth obstacle. 

Garofalo, Petrosyan and Smit Vega Garcia in \cite{GPS16} proved the result for Signorini's problem under the 
hypotheses $\A\in W^{1,\infty}$ and $f\in L^\infty$.
Focardi, Gelli and Spadaro in \cite{FGS} proved the formula for the classical obstacle problem for $\A\in W^{1,\infty}$ and $f\in C^{0,\alpha}$
for $\alpha\in (0,1)$. In the same paper (under the same hypotheses of coefficients) the three authors proved a generalization 
of the monotonicity formula introduced by Monneau \cite{Monneau} to analyze the behaviour near the singular points (see Definition \ref{reg sing}).
In \cite{Mon09} he improved his result; he showed that his monotonicity formula holds
under the hypotheses that $\A\equiv I_n$ and $f$ with a Dini modulus of continuity in an $L^p$ sense. 
In \cite{GP09} Garofalo and Petrosyan showed the formula of Monneau for the thin obstacle with a regular obstacle.\\

In our work (inspired by \cite{FGS}) we prove the quasi-monotonicity formulae under the hypotheses, $(H1)$-$(H4)$ 
improving the results with respect to current literature.
As we will see in Corollary \ref{cor Morrey fraz} if $ps>n$ the embedding $W^{1+s,p}\hookrightarrow W^{1,\infty}$ holds true.
Consequently, we assume $sp\leq n$ and we obtain an original result not covered by \cite{FGS} if $p>\frac{n^2}{n(1+s)-1}\wedge n$. 
(We can observe that $(\frac{n^2}{n(1+s)-1}\wedge n )< \frac{n}{s}$ for all $s\in \R$.)\\ 

Weiss' quasi-monotonicity formula allows us first to deduce that blow-ups are homogeneous of degree $2$, and second 
(using also the nondegeneracy of the solution proven in an even more general setting by Blank and Hao \cite{Blank})
to show that the blow-ups are nonzero.
Thanks to a $\Gamma$-convergence argument and according to Caffarelli's classification of blow-ups, in the classical case
(see \cite{Caf77,Caf80,Caf98}), we can classify the blow-up types and so distinguish the points in $\Gamma_u$ 
as regular and singular (respectively $Reg(u)$ and $Sing(u)$, see Definition \ref{reg sing}).

Following the energetic approach by Focardi, Gelli and Spadaro \cite{FGS}
we prove the uniqueness of blow-ups for both the regular and the singular cases.
In the classical framework, the uniqueness of the blow-ups can be derived, a posteriori,
from the regularity properties of the free-boundary (see Caffarelli \cite{Caf80}).
In our setting we distinguish two cases: $x_0\in Sing(u)$ and $x_0\in Reg(u)$.
In the first case, through the two quasi-monotonicity formulae and an ``absurdum'' argument, we prove the uniqueness of blow-ups 
providing a uniform decay estimate for all points in a compact subset of $Sing(u)$.
In the second case, we need to introduce an assumption, probably of a technical nature, on the
modulus of continuity of $f$: $(H4)$ with $a>2$ 
(more restrictive than double Dini continuity which is equivalent to $(H4)$ with $a=1$, see \cite[Definition 1.1]{Mon09}).

So, thanks to the epiperimetric inequality of Weiss \cite{Weiss} we obtain a uniform decay estimate for the convergence
of the rescaled functions with respect to their blow-up limits.
We recall that Weiss \cite{Weiss} proved the uniqueness for regular points in $\A\equiv I_n$ and $f\equiv 1$.
Focardi, Gelli and Spadaro \cite{FGS} also had proved our same result for $\A$ Lipschitz continuous and $f$ H\"{o}lder continuous.
Monneau \cite{Mon09} proved the uniqueness of blow-ups both for regular points and for singular points with $\A\equiv I_n$ and $f$ with Dini continuous 
modulus of mean oscillation in $L^p$. Therefore, without further hypotheses, in the regular case and adding
double Dini continuity condition on the modulus of the mean oscillation, Monneau gave a very accurate pointwise decay estimate, 
providing an explicit modulus of continuity for the solution.

These results allow us to prove the regularity of free-boundary:
\begin{teo} 
We assume the hypothesis $(H1)$-$(H3)$. The free-boundary decomposes as $\Gamma_u=Reg(u)\cup Sing(u)$ with $Reg(u)\cap Sing(u)=\emptyset$.  
\begin{itemize}
 \item[(i)] Assume $(H4)$ with $a>2$. $Reg(u)$ is relatively open in $\partial \{u=0\}$ and for every point $x_0\in Reg(u)$. 
	    there exists $r=r(x_0)>0$ such that $\Gamma_u\cap B_r(x_0)$ 
	    is a $C^1$ hypersurface with normal vector $\varsigma$ is absolutely continuous with a modulus of continuity depending 
	    on $\rho$ defined in \eqref{rho}.
	    
	    In particular if $f$ is H\"{o}lder continuous there exists $r=r(x_0)>0$ such that $\Gamma_u\cap B_r(x)$ 
	    is $C^{1,\b}$ hypersurface for some universal exponent $\b \in (0,1)$.
 \item[(ii)] Assume $(H4)$ with $a\geq 1$. $Sing(u)=\cup_{k=0}^{n-1} S_k$ (see Definition \ref{d:singular stratum})	     
	     and for all $x\in S_k$ there 
	     exists $r$ such that $S_k\cap B_r(x)$ is contained in a regular $k$-dimensional submanifold of $\R^n$.
\end{itemize}
\end{teo}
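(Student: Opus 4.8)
The plan is to run the classical Caffarelli--Weiss--Monneau program, now powered by the two quasi-monotonicity formulae (Theorems~\ref{Weiss} and~\ref{Monneau}) and the epiperimetric inequality (Theorem~\ref{epip Weiss}), while carrying along the non-power error terms produced by the fractional-Sobolev regularity of $\A$ and the Dini regularity of $f$.

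\emph{Step 1 (classification and decomposition).} Fix $x_0\in\Gamma_u$ and, after the normalisation~\eqref{abuso_notazione}, consider the rescalings $u_{x_0,r}(x)=r^{-2}u(x_0+rx)$. The $C^{1,\c}$ estimate for the rescalings gives, along any sequence $r_j\downarrow 0$, a subsequential limit $u_0$ in $C^1_{\mathrm{loc}}(\R^n)$. Theorem~\ref{Weiss} ensures that $\Phi(0^+)$ exists and is finite, and the derivative estimate~\eqref{dis monotonia Weiss} forces every such $u_0$ to be $2$-homogeneous, while the nondegeneracy of $u$ (Blank--Hao, \cite{Blank}) gives $u_0\not\equiv 0$. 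A $\Gamma$-convergence argument reduces the Euler--Lagrange equation of $u_0$ to $\Delta u_0=1$ on $\{u_0>0\}$, so Caffarelli's classification (\cite{Caf80}) applies: either $u_0(x)=\tfrac12\big(\langle x,\nu\rangle_+\big)^2$ for a unit vector $\nu$ (regular case) or $u_0(x)=\tfrac12\langle \mathbb{A}_0 x,x\rangle$ with $\mathbb{A}_0\geq 0$, $\tr\mathbb{A}_0=1$ (singular case). Since $\Phi(0^+)$ equals a fixed constant $\a_n$ in the first case and is strictly larger in the second, the alternative is independent of the chosen sequence, yielding the disjoint decomposition $\Gamma_u=Reg(u)\cup Sing(u)$ of Definition~\ref{reg sing}.

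\emph{Step 2 (uniqueness of blow-ups and decay estimates).} At a singular point $x_0$, let $v$ be a subsequential polynomial blow-up; Monneau's formula (Theorem~\ref{Monneau}) shows that $r\mapsto \int_{\partial B_1}(u_{x_0,r}-v)^2\,d\mathcal H^{n-1}$ plus a correction vanishing as $r\to0^+$ is nondecreasing — here $(H4)$ with $a\geq1$ is precisely what makes the iterated integral $\int_0^1 t^{-1}\!\int_0^t \omega(s)s^{-1}\,ds\,dt$ finite — so the boundary $L^2$-distance has a limit as $r\to 0^+$, which must be $0$ since it is $0$ along the chosen subsequence; hence $u_{x_0,r}\to v$ in $L^2(\partial B_1)$ and $v=p_{x_0}$ is unique. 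A compactness-and-contradiction argument upgrades this to a decay $\int_{\partial B_1}(u_{x_0,r}-p_{x_0})^2\,d\mathcal H^{n-1}\leq \s(r)$ uniform on compact subsets of $Sing(u)$, with $\s$ controlled by those correction terms. At a regular point $x_0$, inserting the epiperimetric inequality into~\eqref{dis monotonia Weiss} converts the energy gap into $\Phi(r)-\Phi(0^+)\lesssim \rho(r)$ and then into $\|u_{x_0,r}-u_0\|_{W^{1,2}(B_1)}\lesssim \rho(r)^{1/2}$ with $\rho$ as in~\eqref{rho}; this is the step requiring $(H4)$ with $a>2$, needed to absorb the $|\log|$-weighted integrals in the error terms and still obtain an admissible Dini-integrable modulus $\rho$.

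\emph{Step 3 (regularity of $Reg(u)$ and stratification of $Sing(u)$).} The quantitative convergence at $x_0\in Reg(u)$ traps $\{u=0\}$ near $x_0$ inside a cone around the hyperplane $\{\langle x-x_0,\varsigma(x_0)\rangle=0\}$ with opening governed by $\rho$; standard arguments then show that $Reg(u)$ is relatively open in $\partial\{u=0\}$, that $\Gamma_u\cap B_r(x_0)$ is a $C^1$ hypersurface, and that $x_0\mapsto\varsigma(x_0)$ has a modulus of continuity controlled by $\rho$ — a power, hence $C^{1,\b}$, when $f$ is H\"older. For the singular set, stratify $Sing(u)=\cup_{k=0}^{n-1}S_k$ as in Definition~\ref{d:singular stratum} by $k=\dim\ker D^2p_{x_0}$; the uniform Monneau decay shows $x_0\mapsto p_{x_0}$ is continuous and in fact a Whitney field of the appropriate order on $S_k$, so Whitney's extension theorem together with the implicit function theorem (Caffarelli's argument) places $S_k$ inside a $C^1$ $k$-dimensional submanifold of $\R^n$. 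The crux of the whole proof, absent in the Lipschitz/H\"older literature, is exactly that the error terms in~\eqref{dis monotonia Weiss} and~\eqref{dis monotonia Monneau} are not powers of $r$ but mixtures of $r^{1-n/\Theta}$ and iterated integrals of $\omega(t)/t$, each iteration costing one $|\log|$ power; showing these integrate to a finite admissible modulus is what dictates the weights $|\log r|^a$ in $(H4)$, with $a\geq1$ enough for the singular stratification and $a>2$ required to close the regular case.
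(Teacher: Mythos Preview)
Your proposal is correct and follows essentially the same route as the paper: Weiss' quasi-monotonicity (Theorem~\ref{Weiss}) yields $2$-homogeneity and the energy dichotomy giving the disjoint decomposition; Monneau's formula (Theorem~\ref{Monneau}) under $(H4)$ with $a\geq 1$ gives uniqueness and uniform decay at singular points; the epiperimetric inequality combined with the derivative estimate for $\Phi$ gives the decay at regular points under $(H4)$ with $a>2$; and the final regularity statements are then obtained by the standard Caffarelli--Weiss arguments (flatness improvement for $Reg(u)$, Whitney extension for the strata $S_k$). The paper itself does not spell out Step~3 but simply invokes \cite[Theorems~4.12 and~4.14]{FGS}, whose proofs are exactly the arguments you outline; your identification of where each threshold on $a$ enters --- $a\geq 1$ for the finiteness of the iterated Dini integral in \eqref{funz monotona Monneau}, $a>2$ for the summability of $\sum_j j^{-a/2}$ in the dyadic argument defining $\rho$ in \eqref{rho} --- matches the paper precisely.
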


In order to justify the choice of regularity of the coefficients of $\A$ and $f$ we discuss the hypotheses $(H1)$ and $(H3)$.

The hypothesis $(H3)$ turns out to be the best condition to obtain the uniqueness of blow-up 
(we need of $(H4)$ with $a\geq 1$ in the case of singular points and $a>2$ in the case of regular points). 
In fact when condition \eqref{e:Dini continuity} is not satisfied, 
Blank gave in \cite{Bl01} an example of nonuniqueness of the blow-up limit at a
regular point. Monneau observed in \cite{Monneau} that using the symmetry $x \mapsto - x$, 
it is easy to transform the result of Blank into an example
of nonuniqueness of the blow-up limit at a singular point when condition \eqref{e:Dini continuity} is not satisfied.

Before taking into account hypothesis $(H1)$ we need to clarify the relationship between the regularity of coefficients $\A$, $f$ and 
the regularity of the free-boundary. 
Caffarelli \cite{Caf77} and Kinderlehrer and Nirenberg \cite{KN77} proved that for smooth coefficients
of $\A$ and for $f\in C^1$ the regular points are a $C^{1,\alpha}$-manifold for all $\a\in (0,1)$, for $f\in C^{m,\alpha}$ 
$Reg(u)$ is a $C^{m+1,\alpha}$-manifold with $\alpha\in (0,1)$ and if $f$ is analytic so is $Reg(u)$.
In \cite{Bl01} Blank proved that, in the Laplacian case with $f$ Dini continuous, the set of regular points is a $C^1$-manifold, 
but if $f$ is $C^0$, but is not Dini continuous, then $Reg(u)$ is Reifenberg vanishing, but not necessarily smooth. 
In \cite{FGS} Focardi, Gelli and Spadaro proved that if $\A\in W^{1,\infty}$ and $f\in C^{0,\alpha}$ with $\alpha\in (0,1)$,
then $Reg(u)$ is a $C^{1,\beta}$-manifold with $\beta\in (0,\alpha)$. 
A careful inspection of the proof of \cite[Theorem 4.12]{FGS} shows that in the case of $\A\in W^{1,\infty}$ and $f\equiv 1$ the 
regular set turns out to be a $C^{1,\beta'}$-manifold with $\beta'\in (0,\frac{1}{2})$, so, despite the linear term being constant,
the regularity improves slightly but remains in the same class. 
Blank and Hao in \cite{BH15} proved that if $a_{i,j},f\in VMO$, any compact set $K\subset\subset Reg(u)\cap B_\frac{1}{2}$
is relatively Reifenberg vanishing with respect to $Reg(u)\cap B_\frac{1}{2}$.
So the regularity of the regular part of the free-boundary turns out to be strictly related to regularity of coefficients 
of matrix $\A$ and the linear term $f$. 
Under the hypotheses $(H1)$ and $(H2)$ we prove that 
if $f$ is H\"{o}lder continuous we obtain that the 
regular part of the free-boundary is a $C^{1,\beta}$-manifold for some $\beta$, while if $f$ satisfies hypothesis $(H4)$ with $a>2$ 
we prove that $Reg(u)$ is a $C^1$-manifold.

So the process of weakening the regularity of coefficients goes along two directions: to obtain a strong  
or a weak regularity of the regular part of the free-boundary. 
Our work forms part of the first way and with the technical hypothesis $(H4)$ wiht $a>2$ for $f$, 
which is better than the H\"{o}lder continuity, 
and by hypothesis $(H1)$ of matrix $\A$, we improve the current literature. 
The best regularity for $\A$  that allows us to have a strong regularity of $Reg(u)$ still remains, 
to our knowledge, an open problem.
Regarding the best regularity for $f$, from \cite{Bl01}  we know that it is the Dini continuity; we do not reach it but we improve
the already investigated condition of H\"{o}lder continuity.

The natural sequel of these results is the study of obstacle problems for nonlinear energies. The future developments 
aim at the same direction as \cite{FGerS} where the author, Focardi and Spadaro prove an exhaustive analysis 
of the free-boundary for nonlinear variational energies as the outcome of analogous results for the classical obstacle problem 
for quadratic energies with Lipschitz coefficients.\\

To conclude, the paper is organized as follows. In Section $2$ we fix the notation of fractional Sobolev spaces. 
In Section $3$ we prove the existence, the uniqueness and the regularity of the minimizer $u$.
In Section $4$ we introduce the sequence of rescaled functions, prove the existence of blow-ups and state a property of 
nondegeneracy of the solution of the obstacle problem. 
In Section $5$ and $7$ we respectively prove the quasi-monotonicity formulae of Weiss and Monneau.
In Section $6$ we prove the $2$-homogeneity and the nonzero value property of blow-ups, 
classify blow-ups and distinguish the point of the free-boundary in regular and singular.
In Section $8$ we deduce the uniqueness of blow-ups in the case of regular and singular points.
In Section $9$ we state the properties of the regularity of the free-boundary.

\section{Preliminaries: Fractional Sobolev spaces}
In order to fix the notation, we report the definition of the fractional Sobolev spaces. 
See \cite{NPV,LM} for more detailed references.
\begin{defi}
 For any real $\l\in (0,1)$ and for all $p\in (0,\infty)$ we define the space 
 \begin{equation}
   W^{\l,p}(\Om):=\left\{v\in L^p(\Om) : \frac{|v(x)-v(y)|}{|x-y|^{\frac{n}{p}+\l}}\in L^p(\Om\times\Om) \right\},  
 \end{equation}
 i.e, an intermediary Banach space between $L^p(\Om)$ and $W^{1,p}(\Om)$, endowed with the norm
\begin{equation*}
 \|v\|_{W^{\l,p}(\Om)}=\left(\int_\Om |v|^p\,dx +\iint_{\Om\times\Om} \frac{|v(x)-v(y)|^p}{|x-y|^{n+\l p}}\,dx\,dy\right)^\frac{1}{p}.
\end{equation*}
If $\l>1$ and not integer we indicate with $\lfloor \l \rfloor$ its integer part and with $\sigma=\l - \lfloor \l \rfloor$  its fractional part. In this case the space $W^{\l,p}$ consists 
of functions $u\in W^{\lfloor \l \rfloor,p}$ such that the distributional derivatives $D^\a v\in W^{\s,p}$ with $|\a|=\lfloor \l \rfloor$
\begin{equation*}
  W^{\l,p}(\Om):=\left\{v\in W^{\lfloor \l \rfloor,p}(\Om) : \frac{|D^\a v(x)-D^\a v(y)|}{|x-y|^{\frac{n}{p}+\s}}\in L^p(\Om\times\Om),\quad \forall\a \,\,\textit{such that}\,\, |\a|=\lfloor \l \rfloor  \right\}.
\end{equation*}
$W^{\l,p}(\Om)$ is a Banach space with the norm
\begin{equation*}
 \|v\|_{W^{\l,p}(\Om)}=\left(\|v\|^p_{W^{\lfloor \l \rfloor,p}(\Om)} +\sum_{|\a|=\lfloor \l \rfloor}\|D^\a v\|^p_{W^{\s,p}(\Om)}\right)^\frac{1}{p}.
\end{equation*}
\end{defi}

We state three results on fractional Sobolev spaces useful for the follows. Theorems \ref{cor imm fraz} and \ref{toerema-traccia-fraz}
are proved, respectively in \cite{Mu} and \cite{Sch}, for Besov spaces; thanks to \cite[Remark 3.6]{Sch} 
and \cite[Theorem 14.40]{Leoni} we can reformulate these results in our notation.
Theorem \ref{cor Morrey fraz} is obtained combining classical Morrey theorem, \cite[Theore 8.3]{NPV} and Theorem \ref{cor imm fraz}.

\begin{teo}[Embedding Theorem {\cite[Theorem 9]{Mu} and \cite[Theorem 6.5]{NPV}}]\label{cor imm fraz}
Let $v\in W^{\l,p}(\Om)$ with $\l>0$, $1 <p<\infty$, $p\l<n$ and $\Om \subset \R^n$ be a bounded open set of class $C^{0,1}$.   
Then for all $0<t<\l$ 
there exists a constant $C=C(n,\l,p,t,\Om)$ for which 
\begin{equation*}
 \|v\|_{W^{t,\frac{np}{n-(\l-t)p}}(\Om)} \leq C\, \|v\|_{W^{\l,p}(\Om)}.
\end{equation*}
If $t=0$ there exists a constant $C=C(n,\l,p,\Om)$ for which 
\begin{equation*}
 \|v\|_{L^{\frac{np}{n-\l p}}(\Om)} \leq C\, \|v\|_{W^{\l,p}(\Om)}.
\end{equation*}
\end{teo}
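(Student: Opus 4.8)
The statement is a Sobolev-type embedding within the fractional scale, and since the paper adopts the Gagliardo (Aronszajn--Slobodeckij) definition, the natural strategy is to transfer the corresponding classical embeddings for Besov spaces. My plan is: first, identify $W^{\l,p}$ with the Besov space $B^{\l}_{p,p}$ when $\l\notin\N$; second, reduce from $\Om$ to $\R^n$ by extension; third, prove the Besov embedding on $\R^n$ by a Littlewood--Paley argument; fourth, descend back to $\Om$ and treat separately the endpoint $t=0$ and the case $\l>1$.

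For the first step I would use that, for non-integer $\l$, the Gagliardo seminorm $[v]_{W^{\l,p}}$ is equivalent to the Littlewood--Paley seminorm of $B^{\l}_{p,p}$, with equivalence of the full norms; this is exactly the reformulation recorded in \cite[Remark 3.6]{Sch} and \cite[Theorem 14.40]{Leoni}. For the second step, since $\Om$ is bounded and of class $C^{0,1}$ there is a bounded linear extension operator $E\colon W^{\l,p}(\Om)\to W^{\l,p}(\R^n)$, so it suffices to prove the inequalities on all of $\R^n$ and then restrict. (When $\l\in\N$ the space $W^{\l,p}$ is the classical Sobolev space and the statement is the ordinary Sobolev inequality, so that case is set aside.)

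For the third step with $0<t<\l$, I must show $B^{\l}_{p,p}(\R^n)\hookrightarrow B^{t}_{q,q}(\R^n)$ for $q=\frac{np}{n-(\l-t)p}$; note that $(\l-t)p<\l p<n$, so $q$ is finite, and the exponents are tuned so that $t-\frac nq=\l-\frac np$, i.e. $t+n(\tfrac1p-\tfrac1q)=\l$. Writing a Littlewood--Paley decomposition $f=\sum_j\Delta_j f$, Bernstein's inequality gives $\|\Delta_j f\|_{L^q}\lesssim 2^{jn(1/p-1/q)}\|\Delta_j f\|_{L^p}$, hence $2^{jt}\|\Delta_j f\|_{L^q}\lesssim 2^{j\l}\|\Delta_j f\|_{L^p}$; summing the $q$-th powers over $j$ and using $\ell^p\hookrightarrow\ell^q$ (valid since $q\ge p$) yields $\|f\|_{B^t_{q,q}}\lesssim\|f\|_{B^\l_{p,p}}$. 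This is precisely the Besov embedding of \cite[Theorem 9]{Mu}. One could instead estimate $[v]_{W^{t,q}}$ directly from $[v]_{W^{\l,p}}$ and $\|v\|_{L^p}$, but the frequency-side computation is shorter.

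For the endpoint $t=0$ the natural Besov target is $B^0_{q,p}$, which is not $L^q$, so I would argue slightly differently: for $\l\in(0,1)$ the inequality $\|v\|_{L^{np/(n-\l p)}(\Om)}\lesssim\|v\|_{W^{\l,p}(\Om)}$ is \cite[Theorem 6.5]{NPV} (proved there by an elementary dyadic argument), and for non-integer $\l>1$ I would write $\l=m+\sigma$ with $m=\lfloor\l\rfloor$, apply the $\sigma$-case to the top-order derivatives $D^{\a}v$ with $|\a|=m$ and the classical Morrey/Sobolev inequalities to the lower-order terms, reassembling through the definition of $\|v\|_{W^{\l,p}}$. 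The hard part is really the second step: producing a bounded extension operator for the full, possibly high and non-integer, smoothness $\l$ on a merely Lipschitz domain, together with keeping track of the integer exponents where the $W$- and $B$-scales do not coincide; once the Besov embedding and the norm equivalence are available, the rest is bookkeeping of indices.
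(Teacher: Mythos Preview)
The paper does not give its own proof of this statement: it records it as a preliminary result, citing \cite[Theorem~9]{Mu} and \cite[Theorem~6.5]{NPV}, and notes (just before the statement) that the Besov-space versions transfer to the Gagliardo $W^{\lambda,p}$ scale via the identifications in \cite[Remark~3.6]{Sch} and \cite[Theorem~14.40]{Leoni}. Your proposal is exactly a fleshing-out of that route---identify $W^{\lambda,p}$ with $B^{\lambda}_{p,p}$ for non-integer $\lambda$, extend from the Lipschitz domain to $\R^n$, run the Littlewood--Paley/Bernstein argument for the Besov embedding (which is Muramatu's theorem), and handle $t=0$ by the direct estimate of \cite{NPV}---so it is correct and coincides with the paper's (citation-level) approach; the caveats you flag about extension on Lipschitz domains and the mismatch of the $W$- and $B$-scales at integer exponents are precisely the points absorbed by those references.
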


\begin{teo}[Embedding Theorem]\label{cor Morrey fraz}
 Let $p\in [1,\infty)$ such that $sp>n$ and $\Om\subset\R^n$ be an extension domain 
 for $W^{\l,p}$. Then there exists a positive constant $C=C(n,p,\l,\Om)$, for which  
 \begin{equation}
  \|v\|_{C^{h,\a}}\leq C\|v\|_{W^{\l,p}(\Om)},
 \end{equation}
for all $v\in L^p(\Om)$ for some $\a\in (0,1)$ and $h$ integer with $h\leq m$. 
\end{teo}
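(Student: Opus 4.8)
The plan is to deduce the claim by stacking three facts already at our disposal: the classical integer-order Morrey--Sobolev inequalities, the fractional Morrey embedding \cite[Theorem 8.3]{NPV} (which gives $W^{\sigma,p}(\Om)\hookrightarrow C^{0,\sigma-n/p}(\overline{\Om})$ whenever $\sigma\in(0,1)$, $\sigma p>n$ and $\Om$ is an extension domain), and the fractional Sobolev embedding of Theorem \ref{cor imm fraz}. Write $m=\lfloor\lambda\rfloor$ and $s=\lambda-m\in[0,1)$ for the integer and fractional parts of $\lambda$; the natural hypothesis under which the statement holds is $\lambda p>n$ (when $m=0$ this is the written condition $sp>n$). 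Two degenerate cases are immediate: if $m=0$ then $\lambda p>n$ reads $sp>n$ and \cite[Theorem 8.3]{NPV} gives $v\in C^{0,\lambda-n/p}(\overline{\Om})$ with the asserted bound (here $h=0$); if $s=0$ then $\lambda=m$ is an integer with $mp>n$ and one invokes directly the classical higher-order Morrey--Sobolev theorem for $W^{m,p}(\Om)$ (here the extension-domain hypothesis enters). The substance is therefore the case $m\geq1$, $s\in(0,1)$.

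In that case, by the very definition of $W^{\lambda,p}(\Om)$ every top-order derivative $D^\eta v$ with $|\eta|=m$ lies in $W^{s,p}(\Om)$ with $\|D^\eta v\|_{W^{s,p}(\Om)}\leq\|v\|_{W^{\lambda,p}(\Om)}$, and I split according to the sign of $sp-n$. If $sp>n$, I apply \cite[Theorem 8.3]{NPV} to each $D^\eta v$, $|\eta|=m$, getting $D^\eta v\in C^{0,s-n/p}(\overline{\Om})$ with norm controlled by $\|v\|_{W^{\lambda,p}(\Om)}$; since a function all of whose $m$-th order distributional derivatives are Hölder continuous of exponent $s-n/p$ is itself of class $C^{m,s-n/p}$, this propagates down to $v\in C^{m,s-n/p}(\overline{\Om})$, i.e.\ $h=m$ and $\a=s-n/p=\lambda-n/p-m\in(0,1)$.

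If instead $sp\leq n$, the fractional Morrey embedding is not available for the top-order derivatives, so I first trade differentiability for integrability: the endpoint ($t=0$) case of Theorem \ref{cor imm fraz} applied to $D^\eta v\in W^{s,p}(\Om)$ yields $D^\eta v\in L^{q}(\Om)$ with $q=\frac{np}{n-sp}$ (any finite exponent if $sp=n$), still with norm controlled by $\|v\|_{W^{\lambda,p}(\Om)}$; combined with the classical Sobolev embeddings for the lower-order derivatives $D^\eta v\in W^{m-|\eta|,p}(\Om)$, $|\eta|<m$ (which apply since $m-|\eta|\geq1>s$), this gives $v\in W^{m,q}(\Om)$ with $\|v\|_{W^{m,q}(\Om)}\leq C\|v\|_{W^{\lambda,p}(\Om)}$. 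As $m-\frac nq=m+s-\frac np=\lambda-\frac np>0$, in particular $mq>n$, and the classical higher-order Morrey--Sobolev theorem gives $v\in C^{h,\a}(\overline{\Om})$ with $h=\lfloor\lambda-n/p\rfloor\leq m$ and $\a=\lambda-n/p-h$ when $\lambda-n/p\notin\Z$ (and $h=\lambda-n/p-1$, any $\a\in(0,1)$, in the borderline case); the final estimate $\|v\|_{C^{h,\a}}\leq C\|v\|_{W^{\lambda,p}(\Om)}$ follows by composing the bounded embeddings used along the way.

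The only genuine obstacle I anticipate is the regime $sp\leq n<\lambda p$: there the fractional Morrey inequality cannot be applied to the top-order derivatives, and one is forced to pass through Theorem \ref{cor imm fraz} and then fall back on the integer-order theory, so the exponent bookkeeping — together with the degenerate case $\lambda-n/p\in\Z$ — is where care is needed. The remaining points (continuity of the $m$-th derivatives forcing $v\in C^m$, and continuity of a finite composition of embeddings) are routine.
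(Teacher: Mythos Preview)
Your proposal is correct and follows exactly the route the paper indicates: the paper does not give a detailed proof but simply states that Theorem~\ref{cor Morrey fraz} ``is obtained combining classical Morrey theorem, \cite[Theorem 8.3]{NPV} and Theorem~\ref{cor imm fraz}'', and your argument is a faithful fleshing-out of precisely that combination, including the natural case split $sp>n$ versus $sp\le n<\lambda p$.
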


\begin{teo}[Trace Theorem {\cite[Theorem 3.16]{Sch}}]\label{toerema-traccia-fraz}
Let $n\geq 2$, $0<p<\infty$, $\l>\frac{1}{p}$ and $U$ a bounded $C^k$ domain, $k>\l$ in $\R^n$. 
Then there exists a bounded operator
 \begin{equation}
  \c_0:W^{\l,p}(U)\longrightarrow W^{\l-\frac{1}{p},p}(\partial U; \mathcal{H}^{n-1}),
 \end{equation}
such that $\c_0(v)=v_{|\partial U}$ for all functions $v\in W^{\l,p}(U)\cap C(\overline{U})$. $\c_0$ is called \emph{trace operator}.
\end{teo}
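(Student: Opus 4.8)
The plan is to obtain the statement as a transcription, into the Sobolev--Slobodeckij language, of the Besov-space trace theorem \cite[Theorem 3.16]{Sch}. The argument rests on two identifications of function spaces with equivalent (quasi-)norms. First, on the domain $U$: for $\l>0$ non-integer and $0<p<\infty$ one has $W^{\l,p}(U)=B^\l_{p,p}(U)$, which for a bounded $C^k$ domain with $k>\l$ is precisely \cite[Remark 3.6]{Sch}. Second, on the boundary: since $\partial U$ is a compact $C^k$ manifold and $k>\l>\l-\frac1p>0$, the space $W^{\l-\frac1p,p}(\partial U;\mathcal H^{n-1})$ is well defined through a finite atlas of local charts with a subordinate partition of unity, and it coincides with $B^{\l-\frac1p}_{p,p}(\partial U)$; this follows from \cite[Theorem 14.40]{Leoni}. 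The hypothesis $\l>\frac1p$ enters here exactly as the condition that makes $\l-\frac1p$ positive, hence the target a genuine function space rather than a space of distributions.

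With these two isomorphisms fixed, the core of the argument is \cite[Theorem 3.16]{Sch} itself, whose proof follows the classical scheme: localize near $\partial U$ with a finite partition of unity subordinate to a $C^k$-atlas, flatten each boundary patch to a piece of $\R^{n-1}\times\{0\}$ sitting in a half-space (the $C^k$-regularity with $k>\l$ is what keeps the smoothness index $\l$ available after this change of variables), apply the half-space trace and extension theorems on the Besov scale, and reassemble. The outcome is a bounded linear operator $B^\l_{p,p}(U)\to B^{\l-\frac1p}_{p,p}(\partial U)$ extending the restriction map on smooth functions, valid under exactly the assumptions $0<p<\infty$, $\l>\frac1p$, and $U\in C^k$ with $k>\l$. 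Conjugating this operator by the two norm-equivalences above produces the bounded $\c_0\colon W^{\l,p}(U)\to W^{\l-\frac1p,p}(\partial U;\mathcal H^{n-1})$ we want, with a norm bound depending only on $n$, $p$, $\l$ and the atlas and extension constants of $U$.

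It then remains to check the compatibility $\c_0(v)=v_{|\partial U}$ for $v\in W^{\l,p}(U)\cap C(\overline U)$, and I would argue by density. Functions in $C^\infty(\overline U)$ are dense in $W^{\l,p}(U)$ (again a consequence of the $C^k$-regularity of $U$, via flattening and mollification), for such functions $\c_0$ is by construction the literal trace, and any $v\in W^{\l,p}(U)\cap C(\overline U)$ can be approximated simultaneously in $W^{\l,p}(U)$ and in $C(\overline U)$ by smooth functions; passing to the limit, $\c_0(v)$ and the continuous restriction $v_{|\partial U}$ agree in $L^p(\partial U)$, hence as elements of the boundary space.

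The only point requiring care is the bookkeeping of exponent ranges and of the (quasi-)Banach structure: when $p<1$ both $W^{\l,p}$ and $B^\l_{p,p}$ are merely quasi-normed, so one must confirm that the statements invoked from \cite{Sch} and \cite{Leoni} are genuinely formulated for the full range $0<p<\infty$ and for the precise triple $(n,p,\l)$ in play, and that $k>\l$ is strong enough for both the flattening arguments behind \cite[Theorem 3.16]{Sch} and the chart definition of the boundary space. Once this is verified, the proof is simply the transfer of a known result across a change of notation.
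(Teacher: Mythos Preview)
Your proposal is correct and mirrors exactly what the paper does: the paper does not give an independent proof but simply cites \cite[Theorem~3.16]{Sch} for Besov spaces and invokes \cite[Remark~3.6]{Sch} and \cite[Theorem~14.40]{Leoni} to rephrase the result in the $W^{\l,p}$ notation. Your write-up is an explicit unpacking of that same transcription, so there is nothing to add.
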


\begin{oss}\label{oss stime imm e traccia}
 Let $p,\l$ be exponents as in theorem \ref{toerema-traccia-fraz}, $p\l<n$ and $\s:=\l-\lfloor \l \rfloor$. 
If $U=B_r$, we see how the constant of the trace operator changes  when the radius $r$ changes.
 
 By taking into account Theorem \ref{toerema-traccia-fraz} and \ref{cor imm fraz} we have the following embeddings
 \begin{equation*}
  W^{\l,p}(B_r)\hookrightarrow W^{\l-\frac{1}{p},p}(\partial B_r; \mathcal{H}^{n-1}) \hookrightarrow L^1(\partial B_r; \mathcal{H}^{n-1}).
 \end{equation*}
Then, setting $v_r(y)=v(ry)$
\begin{equation*}
\begin{split}
 \int_{\partial B_r}&|\c_0(v)(x)|\,d\mathcal{H}^{n-1} \stackrel{y=rx}{=}r^{n-1}\int_{\partial B_1}|\c_0(v_r)(y)|\,d\mathcal{H}^{n-1}\\
 &\leq c(1) r^{n-1}\bigg(\sum_{|\a|\leq \lfloor \l \rfloor}\int_{B_1}|D^\a v_r(y)|^p\,dx + \iint_{B_1\times B_1}\frac{|v_r(y)-v_r(z)|^p}{|y-z|^{n+\s p}}\,dy\,dz\bigg)^\frac{1}{p}\\
 &\stackrel{x=\frac{y}{r}}{=} c(1) r^{n-1}\,r^{-\frac{n}{p}} \bigg(\sum_{|\a|\leq \lfloor \l \rfloor}\int_{B_r}|D^\a v(x)|^p\,dx+r^{\s p}\iint_{B_r\times B_r}\frac{|v(x)-v(w)|^p}{|x-w|^{n+\s p}}\,dx\,dw\bigg)^\frac{1}{p}\\
 &\leq C r^{n-1}\,r^{-\frac{n}{p}} \|v\|_{W^{\l,p}(B_r)}.
\end{split}
\end{equation*}
For which
\begin{equation}\label{stima imm 1}
 \|\c_0(v)\|_{L^1(\partial B_r; H^{n-1})}\leq C\,r^{n-1}\,r^{-\frac{n}{p}} \|v\|_{W^{\l,p}(B_r)}.
\end{equation}

If $p\leq n$, let $\frac{n-(\l-t)p}{np}<t<\l$, i.e. $\frac{n-\l p}{(n-1)p}<t<\l$, of note that  $\l>\frac{n-\l p}{(n-1)p}$ if and only if $\l>\frac{1}{p}$.
We infer by Theorem \ref{cor imm fraz} and by Theorem \ref{toerema-traccia-fraz} the following
\begin{equation*}
  W^{\l,p}(B_r)\hookrightarrow W^{t,\frac{np}{n-(\l-t)p}}(B_r)\hookrightarrow W^{t-\frac{n-(\l-t)p}{np},\frac{np}{n-(\l-t)p}}(\partial B_r; \mathcal{H}^{n-1}) \hookrightarrow L^1(\partial B_r; \mathcal{H}^{n-1}).
 \end{equation*}
Applying the same reasoning to deduce \eqref{stima imm 1}, in particular we achieve
 \begin{equation}\label{stima imm q}
 \|\c_0(v)\|_{L^1(\partial B_r; H^{n-1})}\leq C\, r^{n-1}\,r^{-\frac{n-(\l-t)p}{p}} \|v\|_{W^{t,\frac{np}{n-(\l-t)p}}(B_r)}.
\end{equation}
\end{oss}

\section{The classical obstacle problem}
Let $\Omega\subset \R^n$ be a smooth, bounded and open set, $n\geq 2$, let $\mathbb{A}:\Omega\to \R^{n\times n}$ be a matrix-valued field 
and $f:\Omega\to \R$ be a function satisfying assumptions $(H1)$-$(H3)$ seen in the Introduction.

\begin{oss}
 By \eqref{A coerc cont}, we immediately deduce  that $\A$ is bounded. In particular, $\|\A\|_{L^\infty(\Om)}\leq \Lambda$.  
\end{oss} 

We define, for every open $A\subset \Omega$ and for each function $v\in H^1(\Omega)$, the following energy:
\begin{equation}
 \mathcal{E}[v,A]:=\int_A \left(\langle \A(x)\nabla v(x),\nabla v(x)\rangle +2f(x)u(x)\right)\,dx,
\end{equation}
with $\mathcal{E}[v,\Omega]:=\mathcal{E}[v]$.

\begin{prop}
We consider the following minimum problem with obstacle:  
\begin{equation}\label{problema ad ostacolo}
\inf_K \mathcal{E}[\cdot],
\end{equation}
where $K\subset H^1(\Omega)$ is the weakly closed convex given by
\begin{equation}
 K:=\{v\in H^1(\Omega)\, |\,  v\geq 0\, \mathcal{L}^n\textit{-a.e. on}\,\, \Omega,\, \c_0(v)=g\,\textit{on}\,\, \partial\Omega \},
\end{equation}
with $g\in H^\frac{1}{2}(\partial\Omega)$ being a nonnegative function.

Then there exists a unique solution for the minimum problem \eqref{problema ad ostacolo}.
\end{prop}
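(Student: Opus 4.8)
The plan is to apply the direct method of the calculus of variations. First I would check that the functional $\mathcal{E}$ is well defined and finite on the admissible class $K$: coercivity of $\A$ gives $\langle \A(x)\nabla v,\nabla v\rangle \geq \Lambda^{-1}|\nabla v|^2$, while $f$ is continuous (hence bounded on $\overline\Om$) and $v\in H^1(\Om)\hookrightarrow L^2(\Om)$, so the linear term $2\int_\Om f v$ is finite; moreover $K$ is nonempty since $g\in H^{1/2}(\partial\Om)$ admits a nonnegative $H^1$-extension (take the harmonic extension and use that $g\geq 0$, or truncate any extension at $0$). Next I would establish \emph{coercivity}: using $\Lambda^{-1}|\nabla v|^2 \leq \langle \A\nabla v,\nabla v\rangle$, the boundedness of $f$, Young's inequality on $2\int fv$, and the Poincar\'e inequality (valid on $K$ because the trace is fixed equal to $g$), one gets $\mathcal{E}[v]\geq c_1\|v\|_{H^1(\Om)}^2 - c_2$ for constants depending only on $\Lambda$, $\|f\|_\infty$, $\Om$ and $g$. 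Hence any minimizing sequence $(v_k)\subset K$ is bounded in $H^1(\Om)$.

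Then I would extract a subsequence $v_k \rightharpoonup v$ weakly in $H^1(\Om)$. The set $K$ is convex and strongly closed (the constraint $v\geq 0$ a.e.\ and the trace constraint $\c_0(v)=g$ both pass to strong $H^1$-limits, the latter by continuity of the trace operator), hence weakly closed, so $v\in K$. For lower semicontinuity: the linear term $v\mapsto 2\int_\Om f v$ is weakly continuous since $f\in L^2(\Om)$ (indeed $f\in L^\infty$) and $v_k\rightharpoonup v$ in $L^2$; the quadratic term $v\mapsto \int_\Om \langle \A(x)\nabla v,\nabla v\rangle\,dx$ is convex in $\nabla v$ (by $(H2)$, $\A(x)$ is symmetric and positive definite a.e.) and continuous in the strong $H^1$ topology, hence weakly lower semicontinuous. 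Therefore $\mathcal{E}[v]\leq \liminf_k \mathcal{E}[v_k] = \inf_K\mathcal{E}$, so $v$ is a minimizer.

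For uniqueness I would use strict convexity. The map $v\mapsto \mathcal{E}[v]$ is the sum of the strictly convex quadratic form $Q[v]:=\int_\Om\langle\A\nabla v,\nabla v\rangle\,dx$ — strict on the space of $H^1$ functions with zero boundary trace, by coercivity $Q[w]\geq \Lambda^{-1}\|\nabla w\|_2^2 > 0$ for $w\not\equiv 0$ with $\c_0(w)=0$ — plus an affine term. If $v_1,v_2$ are two minimizers, then $v_1-v_2$ has zero trace, and the parallelogram-type identity $\mathcal{E}\big[\tfrac{v_1+v_2}{2}\big] = \tfrac12\mathcal{E}[v_1]+\tfrac12\mathcal{E}[v_2] - \tfrac14 Q[v_1-v_2]$, together with $\tfrac{v_1+v_2}{2}\in K$ (convexity of $K$), forces $Q[v_1-v_2]=0$, hence $\nabla(v_1-v_2)=0$ a.e., hence $v_1=v_2$ since they agree on $\partial\Om$.

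The main obstacle here is essentially bookkeeping rather than conceptual: one must make sure the hypotheses $(H1)$–$(H3)$ are actually \emph{used correctly} and not over-used — only coercivity and boundedness of $\A$ (from $(H2)$) and boundedness of $f$ (from continuity in $(H3)$, or even just $f\in L^2$) are needed for existence and uniqueness of the minimizer; the finer fractional-Sobolev regularity $(H1)$ and the Dini condition are irrelevant at this stage and enter only later, for the Euler–Lagrange equation and the regularity of $u$. A minor technical point to handle with care is the nonemptiness of $K$ and the Poincar\'e inequality on the affine space $\{v: \c_0(v)=g\}$, which I would reduce to the standard zero-trace Poincar\'e inequality by subtracting a fixed extension of $g$.
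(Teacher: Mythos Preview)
Your proposal is correct and follows essentially the same approach as the paper, which simply states that $(H1)$--$(H3)$ imply $\mathcal{E}$ is coercive and strictly convex on $K$, hence weakly lower semicontinuous, and concludes existence and uniqueness of the minimizer; you have spelled out the direct-method details (nonemptiness of $K$, Poincar\'e via subtraction of an extension of $g$, the parallelogram identity for uniqueness) that the paper leaves implicit. Your observation that only $(H2)$ and the boundedness of $f$ are actually needed here, while $(H1)$ and the Dini condition enter only later, is accurate and well noted.
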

\begin{proof}
The hypotheses $(H1)$-$(H3)$ imply 
that the energy $\mathcal{E}$ is coercive and strictly convex in $K$. 
Thus, $\mathcal{E}$ is lower semicontinuous for the weak topology in $H^1(\Om)$, and so there exists a unique minimizer
that, as we stated in the introduction, we will indicate by $u$.
\end{proof}
Now, we can fix the notation for the \emph{coincidence set}, \emph{non-coincidence set} and the \emph{free-boundary} 
by defining the following:
\begin{equation}\label{notation free-boundary set}
 \Lambda_u:=\{u=0\},\qquad N_u:=\{u>0\},\qquad \Gamma_u=\partial\Lambda_u\cap\Om. 
\end{equation}

Actually, the minimum $u$ satisfies the partial differential equation both in the distributional sense and a.e. on $\Om$.
Therefore it shows good properties of regularity: 
\begin{prop}\label{prop PDE_u}
Let $u$ be the minimum of $\mathcal{E}$ in $K$. Then 
 \begin{equation}\label{PDE_u}
  \mathrm{div}(\A(x)\nabla u(x))=f(x)\chi_{\{u>0\}}(x) \qquad \textit{a.e. on $\Om$ and in $\mathcal{D}'(\Om)$}. 
 \end{equation}
Therefore,
\begin{itemize}
 \item[(i)] if $ps<n$, called  $p^*(s,p):=p^*=\frac{np}{n-sp}$, we have $u\in W^{2,p^*}\cap C^{1,1-\frac{n}{p^*}}(\Om)$;
 \item[(ii)] if $ps=n$ we have $u\in W^{2,q}\cap C^{1,1-\frac{n}{q}}(\Om)$ for all $1<q<\infty$.
\end{itemize}
\end{prop}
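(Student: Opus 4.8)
The plan is to proceed in two stages: first establish the Euler--Lagrange equation \eqref{PDE_u}, then bootstrap regularity from it. For the first stage, I would use the standard obstacle-problem variational argument. Since $u$ minimizes $\mathcal{E}$ over the convex set $K$, for any nonnegative $\phi\in C_c^\infty(\Om)$ the function $u+t\phi$ lies in $K$ for all $t\ge 0$, so $\frac{d}{dt}\mathcal{E}[u+t\phi]\big|_{t=0^+}\ge 0$, which by symmetry and coercivity of $\A$ (hypothesis $(H2)$) yields $\diw(\A\nabla u)\le f$ in $\mathcal{D}'(\Om)$. On the open set $N_u=\{u>0\}$, which is where the constraint is inactive, one can take variations of both signs (for $|t|$ small, $u+t\phi\ge 0$ on the support of $\phi$ when that support is compactly contained in $N_u$), giving $\diw(\A\nabla u)=f$ there. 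It remains to show $\diw(\A\nabla u)=0$ a.e.\ on $\Lambda_u=\{u=0\}$; this is the classical fact that for a Sobolev function, $\nabla u=0$ a.e.\ on the level set $\{u=0\}$ and, more delicately, that second-order quantities vanish a.e.\ there too. Once we know $u\in W^{2,p^*}$ (stage two), the identity $\diw(\A\nabla u)=\partial_i(a_{ij}\partial_j u)$ makes sense pointwise a.e., and on $\{u=0\}$ both $\nabla u$ and $\nabla^2 u$ vanish a.e., so the right-hand side is $f\chi_{\{u>0\}}$ a.e.; combined with the distributional inequality and the equation on $N_u$ this pins down \eqref{PDE_u}. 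I would note that $f\ge c_0>0$ forces $\Lambda_u$ to have empty interior intersected with... actually that is not needed here.

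For the second stage, I would invoke the classical $W^{2,p}$ theory for elliptic equations in divergence form together with the Sobolev regularity of the coefficients. The key point is that $(H1)$ gives $\A\in W^{1+s,p}(\Om;\R^{n\times n})$ with $sp\le n$ (the case $sp>n$ being covered by Corollary \ref{cor Morrey fraz}), and by the fractional embedding Theorem \ref{cor imm fraz}, $\A\in W^{1,q}$ for a suitable $q$; more importantly, the choice of $p$ in $(H1)$, namely $p>\frac{n^2}{n(1+s)-1}$, is precisely engineered so that $\nabla\A$ lies in a Lebesgue space good enough that the equation can be rewritten with a right-hand side in $L^{p^*}$. Concretely, write the equation as $a_{ij}\partial_{ij}u = f\chi_{\{u>0\}} - (\partial_i a_{ij})\partial_j u$; the coefficients $a_{ij}$ are continuous (hypothesis $(H2)$), so Calderón--Zygmund theory applies, and one needs $f\chi_{\{u>0\}}\in L^{p^*}$ (immediate, since $f$ is bounded, being Dini-continuous on a bounded domain) and $(\partial_i a_{ij})\partial_j u\in L^{p^*}$. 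Since $u$ is a priori Lipschitz (from De Giorgi--Nash--Moser applied to the bounded-RHS divergence-form equation, $u\in C^{1,\alpha}_{loc}$ for the continuous-coefficient case — or at minimum $\nabla u\in L^\infty_{loc}$), one has $\nabla u\in L^\infty$, so it suffices that $\nabla\A\in L^{p^*}$, which follows from $\A\in W^{1+s,p}$ via Theorem \ref{cor imm fraz} exactly when $p$ satisfies the stated lower bound. A bootstrap then gives $u\in W^{2,p^*}_{loc}$, and Morrey's embedding $W^{2,p^*}\hookrightarrow C^{1,1-n/p^*}$ (valid since $p^*>n$) gives the Hölder statement. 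Case (ii), $sp=n$, is handled identically: then $\nabla\A\in L^q$ for every $q<\infty$, so $u\in W^{2,q}\cap C^{1,1-n/q}$ for all $q<\infty$.

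The main obstacle I expect is the bookkeeping in stage two: verifying that the numerology of $(H1)$ — specifically $p>\frac{n^2}{n(1+s)-1}\wedge n$ with $s>\frac1p$ — is exactly what makes $\nabla\A\in L^{p^*}$ with $p^*=\frac{np}{n-sp}$, so that the Calderón--Zygmund estimate closes the loop rather than merely improving integrability incrementally. One must be careful that the first-order term $(\partial_i a_{ij})\partial_j u$ is treated as a genuine $L^{p^*}$ source (using the a priori Lipschitz bound on $u$) and not reabsorbed, and that continuity of $a_{ij}$ — not Hölder continuity — is enough for the $W^{2,p^*}$ estimate, which is the classical result of Chiarenza--Frasca--Longo type for VMO (here continuous) coefficients. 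I would also double-check that the a priori regularity needed to assert $\nabla u\in L^\infty$ is legitimately available before the bootstrap: this comes from the fact that $\diw(\A\nabla u)=f\chi_{\{u>0\}}\in L^\infty$ with $\A$ merely bounded measurable and elliptic, giving $u\in C^{0,\alpha}_{loc}$, and then with $\A$ continuous, $u\in C^{1,\alpha}_{loc}$ by Schauder-type perturbation, so $\nabla u\in L^\infty_{loc}$. Finally, Remark \ref{choice (H1)} (referenced in the introduction) presumably contains precisely this computation, so the proof can point there for the sharp exponent verification.
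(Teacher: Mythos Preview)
Your proposal is correct and follows essentially the same route as the paper: establish \eqref{PDE_u} by the standard variational argument (the paper simply cites \cite[Proposition~2.2]{FGS} and \cite[Proposition~3.2]{FGerS}), then use the fractional embedding $W^{1+s,p}\hookrightarrow W^{1,p^*}$ to get $\A\in C^{0,1-n/p^*}$ and $\nabla\A\in L^{p^*}$, obtain $u\in C^{1,\gamma}_{loc}$ first (the paper via \cite[Theorem~3.13]{HL}, you via De~Giorgi--Nash--Moser plus Schauder perturbation), and finally pass to the non-divergence form $\mathrm{Tr}(\A\nabla^2 u)=f\chi_{\{u>0\}}-\sum_j\diw(a^j)\partial_j u\in L^{p^*}_{loc}$ and apply $W^{2,p}$ theory (the paper via \cite[Corollary~9.18, Theorem~8.3]{GT}, you via Chiarenza--Frasca--Longo). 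One small correction: the specific lower bound $p>\frac{n^2}{n(1+s)-1}$ in $(H1)$ is \emph{not} what is needed here---for this proposition only $p^*>n$ matters, which is the weaker condition $p>\frac{n}{1+s}$; the sharper bound and Remark~\ref{choice (H1)} pertain to the exponent $\Theta$ in the quasi-monotonicity formula, not to this regularity step.
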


\begin{proof}
For the first part of proof we refer to \cite[Proposition 2.2]{FGS} and \cite[Proposition 3.2]{FGerS}.

Now, based on equation \eqref{PDE_u}, we can prove (i), the regularity of $u$ if $ps<n$.  
From Theorem \ref{cor imm fraz} $W^{1+s,p}(\Om) \hookrightarrow W^{1,p^*}(\Om)$ with $p^*=\frac{np}{n-sp}$. 
We also note that by the hypothesis $(H1)$ $p^*>n$, so by Morrey theorem $\A\in C^{0,1-\frac{n}{p^*}}(\Om)$.
 Since $u$ is the solution of \eqref{PDE_u}, and 
 thanks to \cite[Theorem 3.13]{HL}, $u\in C_{loc}^{1,1-\frac{n}{p^*}}(\Om)$. 
 We consider the equation
 \begin{align}\label{PDE u Miranda}
  \mathrm{Tr}(\A\nabla^2 v) = f\chi_{\{u>0\}}-\sum_{j}\mathrm{div}(a^j)\frac{\partial u}{\partial x_j}=:\varphi,
 \end{align}
 where the symbol $\mathrm{Tr}$ is the \emph{trace} of the matrix $\A\nabla^2 v$ and $a^j$ denotes the $j$-column of $\A$. 
 Since $\nabla u\in L^\infty_{loc}(\Om)$ and $\diw(a^j)\in L^{p^*}(\Om)$ for all $j\in\{1,\dots,n\}$ 
 then $\varphi\in L^{p^*}_{loc}(\Omega)$. 
 So, from \cite[Corollary 9.18]{GT} there exists a unique $v\in W_{loc}^{2,p^*}(\Om)$
 solution of \eqref{PDE u Miranda}. 
 We observe that the identity 
 $\mathrm{Tr}(\A\nabla^2 v)=\mathrm{div}(\A\nabla v)-\sum_{j}\mathrm{div}(a^j)\frac{\partial u}{\partial x_j}$ is verified.
 So, if we rewrite \eqref{PDE u Miranda} as follows
\begin{equation}
 \mathrm{div}(\A\nabla v)-\sum_{j}\mathrm{div}(a^j)\frac{\partial u}{\partial x_j} =\varphi,
\end{equation}
we have that $u$ and $v$ are two solutions. Then by \cite[Theorem 8.3]{GT} we obtain $u=v$ and the thesis follows.
Instead, if $ps=n$ from \cite[Theorem 6.10]{NPV}, 
$\A\in W^{1,q}$ and so $u\in W^{2,q}\cap C^{1,1-\frac{n}{q}}(\Om)$ for all $1<q<\infty$. 
Applying the same reasoning to deduce the item (i) we obtain the item (ii) of the thesis.
\end{proof}
We note that thanks to continuity of $u$ the sets defined in \eqref{notation free-boundary set} 
are pointwise defined and we can equivalently write $\Gamma_u=\partial N_u\cap\Om$.
\begin{oss}
 The assumption $f\geq c_0>0$ in $(H3)$ is not necessary in order to prove the regularity of $u$ and that the minimum $u$ satisfies 
 the equation~\eqref{PDE_u} (cf. \cite[Proposition~3.2, Theorem~3.4, Corollary~3.5]{FGerS}).
\end{oss}

\section{The blow-up method: Existence of blow-ups and nondegeneracy of the solution}
In this section we shall investigate the existence of blow-ups. 
In this connection, we need to introduce for any point $x_0\in \Gamma_u$ a sequence of rescaled functions: 
\begin{equation}\label{u_x_0 r}
u_{x_0,r}:=\frac{u(x_0+rx)}{r^2}.
\end{equation}
We want to prove the existence of limits (in a strong sense) of this sequence as $r\to 0^+$ and define these \emph{blow-ups}.
We start observing that the rescaled function satisfies an appropriate PDE and satisfies uniform $W^{2,p^*}$ estimates. 
We can prove this thanks to the regularity theory for elliptic equations.

\begin{prop}\label{prop u_r limitata W2p}
 Let $u$ be the solution to the obstacle problem \eqref{problema ad ostacolo} 
and $x_0\in \Gamma_u$.
 Then, for every $R>0$
 there exists a constant $C>0$ such that, for every $r\in (0,\frac{\mathrm{dist}(x_0, \partial\Om)}{4R})$
 \begin{equation}\label{u_r limitata W2p}
  \|u_{x_0,r}\|_{W^{2,p^*}(B_R(x_0))}\leq C.
 \end{equation}
In particular, the functions $u_{x_0,r}$ are equibounded in $C^{1,\c'}$ for $\c'\leq \c:=1-\frac{n}{p^*}$.
\end{prop}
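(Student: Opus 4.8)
The plan is to derive the PDE satisfied by the rescaled function $u_{x_0,r}$ from equation \eqref{PDE_u}, and then invoke the interior $W^{2,p^*}$ estimate for elliptic equations in nondivergence form, being careful to track the dependence of the constants on $r$. First I would observe that since $u$ solves $\diw(\A\nabla u)=f\chi_{\{u>0\}}$ a.e. on $\Om$, a direct change of variables shows that $u_{x_0,r}(x)=r^{-2}u(x_0+rx)$ satisfies
\begin{equation*}
  \diw\big(\A(x_0+rx)\nabla u_{x_0,r}(x)\big)=f(x_0+rx)\,\chi_{\{u_{x_0,r}>0\}}(x)\qquad\text{a.e. on }B_{\mathrm{dist}(x_0,\partial\Om)/r},
\end{equation*}
so that, writing $\A_r(x):=\A(x_0+rx)$ and $f_r(x):=f(x_0+rx)$, the rescaled coefficients satisfy the same ellipticity bounds \eqref{A coerc cont} with the same $\L$ (the coercivity constant is scale-invariant), and $\|f_r\|_{L^\infty}\le\|f\|_{L^\infty}$. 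The key point is that the ellipticity constant and the continuity are uniform in $r$: indeed $\A_r\in W^{1+s,p}$ with $\|\A_r\|_{W^{1+s,p}(B_R)}$ controlled by $\|\A\|_{W^{1+s,p}(\Om)}$ for $r$ small (with a favourable power of $r$ from the scaling of the fractional seminorm, so no blow-up), hence the modulus of continuity of $\A_r$ on $B_{2R}$ is dominated by that of $\A$.

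Next I would rewrite the equation in nondivergence form exactly as in the proof of Proposition \ref{prop PDE_u}: expanding the divergence gives
\begin{equation*}
  \mathrm{Tr}\big(\A_r\nabla^2 u_{x_0,r}\big)=f_r\,\chi_{\{u_{x_0,r}>0\}}-\sum_j \diw(a^j_r)\,\frac{\partial u_{x_0,r}}{\partial x_j}=:\varphi_r,
\end{equation*}
where $a^j_r$ is the $j$-th column of $\A_r$. Since $\A_r\in W^{1,p^*}(B_{2R})$ by the embedding $W^{1+s,p}\hookrightarrow W^{1,p^*}$ (Theorem \ref{cor imm fraz}, using $p^*>n$), and since a first $C^{1,\c}$ bound for $u_{x_0,r}$ on $B_{2R}$ follows from $\A_r\in C^{0,\c}$ via \cite[Theorem 3.13]{HL}, we get $\varphi_r\in L^{p^*}(B_{2R})$ with norm bounded uniformly in $r$. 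Then the interior Calderón–Zygmund estimate \cite[Corollary 9.18]{GT} applied on $B_{2R}\supset B_R$ yields
\begin{equation*}
  \|u_{x_0,r}\|_{W^{2,p^*}(B_R)}\le C\big(\|u_{x_0,r}\|_{L^{p^*}(B_{2R})}+\|\varphi_r\|_{L^{p^*}(B_{2R})}\big),
\end{equation*}
with $C$ depending only on $n,p^*,R,\L$ and the (uniform) modulus of continuity of $\A_r$.

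It remains to bound the right-hand side uniformly in $r$. For $\|u_{x_0,r}\|_{L^\infty(B_{2R})}$ one uses the nondegeneracy/quadratic growth of $u$ near the free-boundary point $x_0$: since $x_0\in\Gamma_u$, $u(x_0)=0=|\nabla u(x_0)|$ and $u\in C^{1,1}_{loc}$-type bounds (here $C^{1,\c}$ with $\nabla^2u\in L^{p^*}$, which still gives $\sup_{B_\rho(x_0)}u\le C\rho^2$ by the $W^{2,p^*}\hookrightarrow C^{1,\c}$ growth together with $u\ge0$), hence $|u_{x_0,r}(x)|=r^{-2}|u(x_0+rx)|\le C|x|^2\le CR^2$ on $B_{2R}$; the term $\diw(a^j_r)\,\partial_j u_{x_0,r}$ is then controlled by $\|\diw\A_r\|_{L^{p^*}(B_{2R})}\|\nabla u_{x_0,r}\|_{L^\infty(B_{2R})}$, both factors uniformly bounded. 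The main obstacle is precisely this uniform quadratic-growth bound on $u$ at $x_0$ — it is the only genuinely nontrivial input, and it has to be extracted from the $W^{2,p^*}$ regularity of $u$ established in Proposition \ref{prop PDE_u} together with the vanishing of $u$ and $\nabla u$ on $\Lambda_u$. Once \eqref{u_r limitata W2p} is established, the final assertion is immediate from the compact embedding $W^{2,p^*}(B_R)\hookrightarrow C^{1,\c'}(B_R)$ for $\c'\le\c=1-\frac{n}{p^*}$, which gives equiboundedness of $u_{x_0,r}$ in $C^{1,\c'}$.
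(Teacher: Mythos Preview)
Your overall architecture matches the paper's: derive the rescaled PDE, pass to nondivergence form, and apply interior Calder\'on--Zygmund. The gap is in the step you yourself flag as the ``main obstacle'', namely the uniform $L^\infty$ bound on $u_{x_0,r}$, which is needed both for the $C^{1,\c}$ estimate and for the control of $\|\varphi_r\|_{L^{p^*}}$. You propose to extract $\sup_{B_\rho(x_0)}u\le C\rho^2$ from $u\in W^{2,p^*}$, $u(x_0)=|\nabla u(x_0)|=0$ and $u\ge 0$. But $W^{2,p^*}\hookrightarrow C^{1,\c}$ with $\c=1-\frac{n}{p^*}<1$ only yields $|\nabla u(y)|\le C|y-x_0|^{\c}$, hence $u(y)\le C|y-x_0|^{1+\c}$, and therefore $\|u_{x_0,r}\|_{L^\infty(B_R)}\le C r^{\c-1}R^{1+\c}\to\infty$ as $r\downarrow 0$; nonnegativity does not upgrade the exponent $1+\c$ to $2$. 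In fact the argument is circular: in the paper the quadratic growth \eqref{u e grad u limitate} is a \emph{consequence} of this proposition, not an input to it.

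The paper breaks the circle by applying the De Giorgi--Nash--Moser theory \cite[Theorems 8.17 and 8.18]{GT} directly to the rescaled divergence-form equation, which has scale-invariant ellipticity and right-hand side bounded by $\|f\|_{L^\infty}$. Since $u_{x_0,r}\ge 0$ and $u_{x_0,r}(\underline{0})=0$, the weak Harnack inequality together with the local boundedness estimate gives $\|u_{x_0,r}\|_{L^\infty(B_{4R})}\le C(R,x_0)\|f\|_{L^\infty}$ with $C$ independent of $r$, without any a priori information on the growth of $u$. From there the proof proceeds essentially as you outline: \cite[Theorem 8.32]{GT} for the uniform $C^{1,\c}$ bound, then \cite[Theorem 9.11]{GT} (rather than Corollary 9.18, which is an existence statement) for the $W^{2,p^*}$ estimate, bounding $\|\varphi_r\|_{L^{p^*}(B_{2R})}$ via the change of variables in the term $r\,\diw\A(x_0+r\cdot)$.
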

\begin{proof}
 From \eqref{u_x_0 r} and Proposition \ref{prop PDE_u} it holds
 \begin{equation}\label{PDE_ur}
  \mathrm{div}(\A(x_0+rx)\nabla u_{x_0,r}(x))=f(x_0+rx)\chi_{\{u_{x_0,r}>0\}}(x) \quad \textit{a.e. on $B_{4R}(x_0)$ and on $\mathcal{D}'(B_{4R}(x_0))$},
 \end{equation}
 and $u_{x_0,r}\in W^{2,p^*}\cap C^{1,\c}(B_{4R}(x_0))$. 
 We have $x_0\in \Gamma_u$, then $u_{x_0,r}(\underline{0})=0$.
 Since $u_{x_0,r}\geq 0$, from \cite[Theorems $8.17$ and $8.18$]{GT} 
 we have 
\begin{equation}\label{u_r leq f}
 \|u_{x_0,r}\|_{L^\infty(B_{4R}(x_0))}\leq C(R,x_0)\|f\|_{L^\infty(B_{4R}(x_0))}. 
\end{equation}
Thanks to \cite[Theorem 8.32]{GT} and \eqref{u_r leq f} we obtain
\begin{equation}\label{u_r C1 hold <f}
 \|u_{x_0,r}\|_{C^{1,\c}(B_{2R}(x_0))}\leq C\, \big(\|u_{x_0,r}\|_{L^\infty(B_{4R}(x_0))} + \|f\|_{L^\infty(B_{4R}(x_0))}\big)\leq C' \|f\|_{L^\infty(B_{4R}(x_0))}.
\end{equation}
We observe that, as in Proposition \ref{prop PDE_u}, $u_{x_0,r}$ is solution to 
\begin{equation}\label{PDE u_r Miranda}
 \mathrm{Tr}\left(\A(x_0+rx)\nabla^2u_{x_0,r}(x)\right) = f(x_0+rx)\chi_{\{u_{x_0,r}>0\}} -  r \sum_{j}\diw\left(a^j(x_0+rx)\right)\frac{\partial u_{x_0,r}}{\partial x_j}(x)=:\varphi_r(x),
\end{equation}
with $\varphi_r\in L^{p^*}(B_{2R}(x_0))$. Then from \cite[Theorem 9.11]{GT}
\begin{equation}
 \|u_{x_0,r}\|_{W^{2,p^*}\left(B_R(x_0)\right)}\leq C \left(\|u_{x_0,r}\|_{L^{p^*}(B_{2R}(x_0))} + \|\varphi_r\|_{L^{p^*}(B_{2R}(x_0))}\right).
\end{equation}
We define $\diw(\A):=(\diw(a^j))_j$, namely the vector of divergence of the vector column of $\A$. Then by \eqref{u_r C1 hold <f}
\begin{equation*}
 \begin{split}
  \|\varphi_r\|^{p^*}_{L^{p^*}(B_{2R}(x_0))}&=\int_{B_{2R}(x_0)} |f(rx)\chi_{\{u_r>0\}} -  r\langle\diw\A(rx),\nabla u_r(x)\rangle|^{p^*}\,dx\\
  &\leq C\,\|f\|^{p^*}_{L^\infty(B_{4R}(x_0))}\bigg(1 + r^{p^*-n}\int_{B_{2rR}(x_0)} |\langle\diw\A(y)|^{p^*}\,dy\bigg)\\
  &\leq C\,\|f\|^{p^*}_{L^\infty(B_{4R}(x_0))}\bigg(1 + \Big(\frac{\mathrm{dist}\left(x_0, \partial\Om\right)}{4R}\Big)^{p^*-n}\|\diw\A(y)\|^{p^*}_{W^{1,p^*}(\Om)}\bigg).
 \end{split}
\end{equation*}
So $\|u_{x_0,r}\|_{W^{2,p^*}}(B_R(x_0))\leq C$, where $C$ does not depend on $r$.
\end{proof}

\begin{cor}[Existence of blow-ups]
 Let $x_0\in \Gamma_u$ with $u$ the solution of \eqref{problema ad ostacolo}. 
Then for every sequence $r_k\downarrow 0$ there exists a subsequence $(r_{k_j})_j\subset (r_k)_k$ such that the rescaled functions 
$(u_{x_0,r_{k_j}})_j$ converge in $C^{1,\gamma}$.
 We define these limits as \emph{blow-ups}.
\end{cor}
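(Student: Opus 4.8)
The plan is to derive this as an immediate consequence of the uniform estimate in Proposition \ref{prop u_r limitata W2p} together with a standard compactness-and-diagonalization argument. First I would fix the sequence $r_k\downarrow 0$ and choose $R>0$ so that the ball $B_R(x_0)$ is compactly contained in $\Om$; then for all $k$ large enough (so that $r_k<\frac{\mathrm{dist}(x_0,\partial\Om)}{4R}$) Proposition \ref{prop u_r limitata W2p} gives $\|u_{x_0,r_k}\|_{W^{2,p^*}(B_R(x_0))}\leq C$ with $C$ independent of $k$. Since $p^*>n$ (hypothesis $(H1)$), the embedding $W^{2,p^*}(B_R(x_0))\hookrightarrow C^{1,\gamma'}(\overline{B_{R'}(x_0)})$ is compact for every $R'<R$ and every $\gamma'<\gamma=1-\frac{n}{p^*}$ (Morrey together with Arzelà–Ascoli applied to $u_{x_0,r_k}$ and $\nabla u_{x_0,r_k}$). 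Hence on $B_{R'}(x_0)$ there is a subsequence converging in $C^{1,\gamma'}$.

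Next I would remove the dependence on $R'$ by a diagonal argument: take an increasing exhaustion $R'_m\uparrow \infty$ (or $R'_m\uparrow$ up to the relevant radius), extract nested subsequences converging in $C^{1,\gamma'}(\overline{B_{R'_m}(x_0)})$ for each $m$, and pass to the diagonal subsequence $(r_{k_j})_j$. This subsequence then converges in $C^{1,\gamma'}_{loc}(\R^n)$ (equivalently, in $C^{1,\gamma'}$ on every fixed compact set), which is exactly the claimed convergence "in $C^{1,\gamma}$" understood in the local sense; one notes that the limit is defined on all of $\R^n$ because for any fixed ball the rescaled functions are eventually defined on it as $r_k\to 0$. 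The limit function $u_0$ inherits $u_0\geq 0$, $u_0(\underline 0)=0$, and the local $W^{2,p^*}$ bound by lower semicontinuity, so it is a genuine solution candidate; this is what we name a blow-up.

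I do not expect a serious obstacle here: the entire content is packaged in Proposition \ref{prop u_r limitata W2p}, and the rest is the routine Rellich–Kondrachov/Arzelà–Ascoli plus diagonalization. The only mild point of care is the bookkeeping of radii — making sure that, for each fixed $R$, the bound \eqref{u_r limitata W2p} applies for all sufficiently large $k$ (true since $r_k\to 0$) and that the constant $C=C(R,x_0)$ does not depend on $k$, which is precisely the statement of the proposition. One should also be explicit that "$C^{1,\gamma}$ convergence" is meant locally (on compact subsets), since the blow-up limit lives on $\R^n$ while each rescaled function is only defined on a shrinking-in-$x_0$-coordinates but expanding-in-$x$-coordinates domain; phrasing the conclusion as convergence in $C^{1,\gamma'}_{loc}(\R^n)$ for every $\gamma'<\gamma$ resolves this cleanly.
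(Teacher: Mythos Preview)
Your proposal is correct and follows exactly the paper's approach: the paper's proof consists of the single sentence ``The proof is an easy consequence of Proposition \ref{prop u_r limitata W2p} and the Ascoli-Arzel\`a Theorem,'' and you have simply unpacked that sentence (compact Sobolev--Morrey embedding from the uniform $W^{2,p^*}$ bound, Arzel\`a--Ascoli, plus a diagonal extraction to get local $C^{1,\gamma'}$ convergence on all of $\R^n$). Your additional remarks about the local nature of the convergence and the bookkeeping of radii are appropriate clarifications, not a different route.
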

\begin{proof}
The proof is an easy consequence of Proposition \ref{prop u_r limitata W2p} and the Ascoli-Arzelà Theorem.
\end{proof}

\begin{oss}
Recalling $x_0\in \Gamma_u$ we have $u(x_0)=0$ and $\nabla u(x_0)=0$ so 
\begin{equation}\label{u e grad u limitate}
 \|u\|_{L^\infty(B_r)(x_0)}\leq C\, r^2 \qquad \mathrm{and} \qquad \|\nabla u\|_{L^\infty(B_r(x_0))}\leq C\, r.   
\end{equation}
We note that the constant in \eqref{u e grad u limitate} only depends on the constant $C$ in \eqref{u_r limitata W2p} 
and is therefore uniformly bounded for points $x_0\in \Gamma_u\cap K$ for each compact set $K\subset \Om$.
\end{oss}

As in classical case, the solution $u$ has a quadratic growth. 
The lacking regularity of the problem does not allow us to use the classic approach by Caffarelli \cite{Caf98} also used by
Focardi, Gelli and Spadaro in \cite[Lemma 4.3]{FGS}. 
The main problem is that $\diw(a^j)$, that is a $W^{1,p^*}$ function, 
is not a priori pointwise defined, so the classical argument fails. 
We use a more general result of Blank and Hao in \cite[Chapter 3]{Blank}.
\begin{prop}[{\cite[Theorem 3.9]{Blank}}]\label{prop crescita basso}
 Let $x_0\in \Gamma_u$, and $u$ be the minimum of \eqref{problema ad ostacolo}. Then, there exists a constant $\theta>0$ such that
 \begin{equation}
  \sup_{\partial B_r(x_0)} u \geq \theta\,r^2.
 \end{equation}
\end{prop}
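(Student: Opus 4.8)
Since the statement is precisely \cite[Theorem 3.9]{Blank}, my proof would amount to checking that our obstacle problem belongs to the class of problems treated by Blank and Hao and then invoking their result. The plan is as follows. First I would recall from Proposition \ref{prop PDE_u} that the minimizer $u$ is nonnegative and solves $\diw(\A\nabla u)=f\chi_{\{u>0\}}$ both $\mathcal L^n$-a.e. and in $\mathcal D'(\Om)$; second, I would note that by $(H2)$ the field $\A$ is continuous and uniformly elliptic with constant $\L$, and that by $(H3)$ one has $f\in L^\infty(\Om)$ together with $f\geq c_0>0$ — exactly the structural hypotheses under which \cite[Chapter 3]{Blank} develops its nondegeneracy theory. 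Applying \cite[Theorem 3.9]{Blank} at the free-boundary point $x_0\in\Gamma_u$ then produces a constant $\theta=\theta(n,\L,c_0)>0$ with $\sup_{\partial B_r(x_0)}u\geq\theta r^2$, which is the claim. Note that, since $f\geq c_0>0$, the equation gives $\diw(\A\nabla u)=f\chi_{\{u>0\}}\geq0$ on all of $\Om$, so $u$ is a global subsolution and $\sup_{B_r(x_0)}u=\sup_{\partial B_r(x_0)}u$ by the maximum principle; hence it is immaterial whether the supremum is taken over the ball or over the sphere.

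It is worth recalling why the classical comparison argument of Caffarelli \cite{Caf98}, used also in \cite[Lemma 4.3]{FGS}, is not available in our setting, since this is what forces the appeal to the external result. In the regular case one fixes $y\in N_u$ close to $x_0$ and argues that a quadratic correction $x\mapsto u(x)-c|x-y|^2$ (with $c$ comparable to $c_0/(n\L)$ and adapted to $\A$) is a subsolution of $\diw(\A\nabla\cdot)$ on $N_u\cap B_r(y)$; the maximum principle then places its supremum over that region on $\partial B_r(y)$, and evaluating at $y$ produces a point of $\partial B_r(y)$ — hence, letting $y\to x_0$, of $\partial B_r(x_0)$ — at which $u\gtrsim r^2$. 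Carrying this out requires the pointwise identity $\diw(\A\nabla|x-y|^2)=2\,\mathrm{Tr}\,\A+2\langle\diw\A,x-y\rangle$, in which $\diw a^j$ appears; but under $(H1)$ this is merely a $W^{1,p^\ast}$ function and is not pointwise defined, so the correction cannot be verified to be a subsolution by a pointwise computation. Blank and Hao circumvent this by running the comparison in an integrated form, using only the ellipticity and continuity of $\A$ and the lower bound $f\geq c_0$.

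Consequently the only genuine task is the verification of hypotheses, and the one mildly nontrivial point is to confirm that $(H1)$ does deliver the continuity of $\A$ required by \cite[Theorem 3.9]{Blank}: this is already contained in Proposition \ref{prop PDE_u}, where one uses $W^{1+s,p}\hookrightarrow W^{1,p^\ast}$ with $p^\ast=\frac{np}{n-sp}>n$ (by the choice of $p$ in $(H1)$) followed by Morrey's embedding to get $\A\in C^{0,1-n/p^\ast}(\Om)$, with the borderline case $ps=n$ handled instead via $\A\in W^{1,q}\hookrightarrow C^{0,1-n/q}$ for large $q$. With this, all the hypotheses of \cite[Theorem 3.9]{Blank} are in force and the proposition follows. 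The hard part will therefore not be the argument itself but rather recognizing that the low regularity of $\diw\A$ rules out the elementary proof and mandates the use of the Blank–Hao nondegeneracy machinery.
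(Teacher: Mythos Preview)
Your proposal is correct and matches the paper's own treatment: the paper does not give a proof but simply invokes \cite[Theorem 3.9]{Blank}, after remarking that the classical Caffarelli comparison argument breaks down because $\diw(a^j)$ is not pointwise defined under $(H1)$. Your verification of the Blank--Hao hypotheses and your explanation of why the elementary argument fails are exactly in this spirit (one small slip: $\diw(a^j)$ lies in $W^{s,p}$, not $W^{1,p^*}$, but this does not affect the argument).
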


To proceed in the analysis of the blow-ups we shall prove a monotonicity formula. This will be a key ingredient to prove the $2$-homogeneity
of blow-ups and that blow-ups are nonzero. Therefore it allows us to classify blow-ups. 
This result will be the focus of Section \ref{s:classification}, while the quasi-monotonicity formula will be 
the topic of Section \ref{s:Weiss}.

\section{Weiss’ quasi-monotonicity formula}\label{s:Weiss}
In this section we show that the monotonicity formulae established by Weiss \cite{Weiss} and Monneau \cite{Monneau} 
in the Laplace case ($\A\equiv I_n$) and by Focardi, Gelli and Spadaro \cite{FGS} in the $\A$ Lipschitz continuous 
and $f$ H\"{o}lder continuous case, hold in our case as well.

As in \cite{FGS} we proceed by fixing the coordinates system: let $x_0\in \Gamma_u$, be any point of free-boundary, then the affine change of variables
\begin{equation}
 x \longmapsto x_0 + f(x_0)^{-\frac{1}{2}}\A^{\frac{1}{2}}(x_0)x = x_0 + \LL(x_0)x
\end{equation}
leads to
\begin{equation*}
 \mathcal{E}[u,\Om]=f^{1-\frac{n}{2}}(x_0)\, \textrm{det}(\A^{\frac{1}{2}}(x_0))\,\, \mathcal{E}_{\LL(x_0)}[u_{\LL(x_0)},\Om_{\LL(x_0)}],
\end{equation*}
with the following notations:
\begin{equation}\label{notazioni trasformate}
 \begin{split}
 \mathcal{E}_{\LL(x_0)}[v,A]&:=\int_A \left(\langle \C_{x_0}\nabla v, \nabla v \rangle + 2\frac{f_{\LL(x_0)}}{f(x_0)}v\right)\,dx \qquad \forall\,A\subset \Om_{\LL(x_0)},\\
 \Om_{\LL(x_0)}&:=\LL(x_0)^{-1}(\Om - x_0),\\
 u_{\LL(x_0)}(x)&:=u(x_0+\LL(x_0)x),\\
 f_{\LL(x_0)}&:=f(x_0+\LL(x_0)x),\\
 \C_{x_0}&:=\A^{-\frac{1}{2}}(x_0) \A(x_0+\LL(x_0)x) \A^{-\frac{1}{2}}(x_0),\\
 u_{\LL(x_0),r}(y)&:=\frac{u(x_0+r\LL(x_0)y)}{r^2}.
 \end{split}
\end{equation}
We observe that the image of the free-boundary in the new coordinates is:
\begin{equation}
 \Gamma_{u_{\LL(x_0)}}=\LL(x_0)^{-1}(\Gamma_u - x_0)
\end{equation}
and we see how energy $\mathcal{E}$ is minimized by $u$, if and only if, the energy $\mathcal{E}_{\LL(x_0)}$ is minimized by $u_{\LL(x_0)}$.

Therefore, for a fixed base point $x_0\in \Gamma_u$, we change the coordinates system and as we stated before
\begin{equation*}
 \underline{0}\in \Gamma_{u_{\LL(x_0)}} \qquad\qquad \C_{x_0}(\underline{0})=I_n\qquad\qquad f_{\LL(x_0)}(\underline{0})=f(x_0). 
\end{equation*}
The point of the choice of this change of variable is that, in a neighborhood of $\underline{0}$, 
the functional $\mathcal{E}_{\LL(x_0)}[v,\Om]$ is a perturbation of $\int_{\Om}(|\nabla v|^2 +2v)\,dx$, 
which is the functional associated with the classical Laplacian case.
We identify the two spaces in this section to simplify the ensuing calculations,
then with a slight abuse of notation we reduce to \eqref{abuso_notazione}:
\begin{equation*}
 x_0=\underline{0}\in \Gamma_u,\qquad \A(\underline{0})=I_n,\qquad f(\underline{0})=1.
\end{equation*}
We note that with this convention $\underline{0}\in\Om$. 
In the new coordinates system we define 
\begin{equation}\label{def mu}
 \nu(x):=\frac{x}{|x|}\quad \textit{for}\,\, x\neq\underline{0}, \qquad\qquad\qquad  
	    \mu(x):=\left\{  \begin{array}{ll}
			  \langle \A(x)\nu(x), \nu(x)\rangle 		&	\quad \textrm{if} \,\,	x\neq\underline{0}\\
			  1						& 	\quad \textrm{otherwise}.
			\end{array} \right.
\end{equation}
We note that $\mu\in C^0(\Om)$ by $(H1)$ and \eqref{abuso_notazione}. We prove the following result:

\begin{lem}\label{lem reg mu}
 Let $\A$ be a matrix-valued field. Assume that $(H1)$, $(H2)$ and \eqref{abuso_notazione} hold, then
 \begin{equation}
  \mu\in W^{1,q}\cap C^{0,1-\frac{n}{p^*}}(\Om) \qquad\qquad \forall q<p^*,
 \end{equation}
and
\begin{equation}\label{mu limitata}
 \Lambda^{-1}\leq \mu(x)\leq \Lambda \qquad\qquad \forall x\in\Om.
\end{equation}
\end{lem}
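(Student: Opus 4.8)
The plan is to derive the regularity of $\mu(x)=\langle \A(x)\nu(x),\nu(x)\rangle$ (for $x\neq\underline 0$, and $\mu(\underline 0)=1$) from that of $\A$, by treating $\mu$ as a composition/product of $\A$ with the fixed smooth-away-from-the-origin field $\nu(x)=x/|x|$. The bounds \eqref{mu limitata} are immediate: by $(H2)$ and \eqref{A coerc cont}, for every $x\neq\underline 0$ we have $\Lambda^{-1}=\Lambda^{-1}|\nu|^2\le \langle\A(x)\nu,\nu\rangle\le\Lambda|\nu|^2=\Lambda$ since $|\nu(x)|=1$, and at $x=\underline 0$ the value $1$ lies in $[\Lambda^{-1},\Lambda]$ because $\Lambda\ge 1$. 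So the content is the Sobolev/Hölder regularity.

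For the Hölder part, first recall that by $(H1)$ and Theorem \ref{cor imm fraz} (as used in Proposition \ref{prop PDE_u}) we have $\A\in W^{1,p^*}(\Om;\R^{n\times n})\hookrightarrow C^{0,1-\frac{n}{p^*}}(\Om;\R^{n\times n})$ via Morrey, since $p^*>n$ by hypothesis $(H1)$. Away from the origin $\nu$ is smooth, so on any $B_R(x)$ with $\underline 0\notin B_{2R}(x)$ the map $x\mapsto\langle\A(x)\nu(x),\nu(x)\rangle$ is a product of $C^{0,1-\frac n{p^*}}$ functions and is therefore $C^{0,1-\frac n{p^*}}$ there with a local estimate. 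The only delicate point is continuity (and the Hölder estimate) at and near $\underline 0$: here I would use \eqref{abuso_notazione}, i.e. $\A(\underline 0)=I_n$, so that
\[
|\mu(x)-\mu(\underline 0)| = \big|\langle(\A(x)-I_n)\nu(x),\nu(x)\rangle\big|\le \|\A(x)-\A(\underline 0)\|\le [\A]_{C^{0,1-\frac n{p^*}}}|x|^{1-\frac n{p^*}},
\]
using $|\nu(x)|=1$. This shows $\mu$ is $C^{0,1-\frac n{p^*}}$ at the origin, and a standard patching argument (splitting $|x-y|$ according to whether both points are comparably far from $0$ or one is within distance $2|x-y|$ of $0$) upgrades this to a genuine global Hölder seminorm bound on $\Om$. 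Hence $\mu\in C^{0,1-\frac n{p^*}}(\Om)$.

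For the $W^{1,q}$ part with $q<p^*$, I would differentiate formally: for $x\neq\underline 0$,
\[
\nabla\mu(x) = \big(\nabla\A(x)\big)[\nu(x),\nu(x)] + 2\,\A(x)\big[\nu(x),\nabla\nu(x)\big],
\]
where $\nabla\nu(x)=\tfrac{1}{|x|}\big(I_n-\nu(x)\otimes\nu(x)\big)$, so $|\nabla\nu(x)|\le C/|x|$. The first term is bounded by $|\nabla\A(x)|\in L^{p^*}(\Om)$; the second is bounded by $C\,\|\A\|_{L^\infty}|x|^{-1}\le C\Lambda|x|^{-1}$, and $|x|^{-1}\in L^q_{\mathrm{loc}}(\R^n)$ precisely for $q<n$, which is implied by $q<p^*$ only when $p^*\le n$ — so here I must instead note that $p^*>n$, hence $|x|^{-1}\in L^q$ near $0$ fails for $q\ge n$; the correct observation is that $|x|^{-1}\in L^q(B_1)$ for all $q<n$, and since we only claim $q<p^*$ we actually need the sharper remark that $\mu$, being Hölder continuous of exponent $1-\frac n{p^*}$ with the radial cancellation above, has $|\nabla\mu(x)|\lesssim |x|^{-\frac n{p^*}}$ near $0$ (the $|x|^{-1}$ blow-up is killed by the factor $\A(x)-\A(\underline 0)=O(|x|^{1-\frac n{p^*}})$ once one writes $\A(x)[\nu,\nabla\nu]=(\A(x)-I_n)[\nu,\nabla\nu]$ and uses that $I_n[\nu,\nabla\nu]=\nu\cdot\nabla\nu=0$). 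Thus $|\nabla\mu(x)|\le |\nabla\A(x)| + C|x|^{-\frac n{p^*}}$, and since $\frac n{p^*}<1$ we get $|x|^{-\frac n{p^*}}\in L^{q}(\Om)$ for every $q<\frac{p^*}{?}$ — in fact for every $q<\infty$ with $q\cdot\frac n{p^*}<n$, i.e. $q<p^*$. Together with $\nabla\A\in L^{p^*}\subset L^q$ this gives $\nabla\mu\in L^q(\Om)$ for all $q<p^*$. Finally I would check that this distributional gradient computation is legitimate up to the origin: since $\{\underline 0\}$ has zero capacity for $W^{1,q}$ with $q<n$ (and $\mu\in L^\infty\cap W^{1,q}_{\mathrm{loc}}(\Om\setminus\{\underline 0\})$ with $L^q$ gradient), a removable-singularity/cutoff argument (test against $\eta_\eps$ vanishing near $0$ and let $\eps\to 0$, controlling $\int|\mu|\,|\nabla\eta_\eps|\lesssim \eps^{n-1}\to 0$) shows the formula for $\nabla\mu$ holds across $\underline 0$, completing the proof.

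The main obstacle is precisely this behaviour at the origin: one must exploit the normalization $\A(\underline 0)=I_n$ so that the singular term $\A(x)\nabla\nu(x)\sim |x|^{-1}$ is tamed to $O(|x|^{-n/p^*})$ by the Hölder decay of $\A(x)-I_n$ together with the algebraic identity $\nu\cdot\nabla\nu=0$; without this cancellation $\nabla\mu$ would only be in $L^q$ for $q<n$, which may be weaker than the claimed $q<p^*$ when $p^*>n$. Everything else (the product rule away from $0$, the Morrey embedding, the capacity argument for removing the point) is routine.
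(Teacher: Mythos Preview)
Your argument is correct and rests on the same core cancellation as the paper --- namely that the normalization $\A(\underline 0)=I_n$ converts the raw $|x|^{-1}$ blow-up of $\nabla\nu$ into the integrable $|x|^{-n/p^*}$ via the H\"older bound $|\A(x)-I_n|\le C|x|^{1-n/p^*}$, together with the algebraic identity $\langle\nu,\partial_k\nu\rangle=\tfrac12\partial_k|\nu|^2=0$. The difference is purely in packaging the $W^{1,q}$ part. You compute $\nabla\mu$ pointwise on $\Om\setminus\{\underline 0\}$, bound it by $|\nabla\A|+C|x|^{-n/p^*}\in L^q$ for $q<p^*$, and then pass through the origin by a cutoff/removable-singularity step (which works because $\mu\in L^\infty$ and $n\ge2$, so the boundary term $\int|\mu|\,|\nabla\eta_\eps|\lesssim\eps^{n-1}\to0$). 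The paper instead uses the difference-quotient characterization of $W^{1,q}$: it estimates $\|\tau_h\mu-\mu\|_{L^q}\le C|h|$ directly, splitting $\mu(x+h)-\mu(x)$ into a term with $\A(x+h)-\A(x)$ and a term with $(\A(x)-\A(\underline 0))$ multiplied by the increment of $\nu$, and lands on the same integral $\int|x|^{-nq/p^*}dx$. Your route gives an explicit formula for $\nabla\mu$ (useful later) at the price of the removability check; the paper's route avoids that check entirely. For the H\"older part your sketch is the same idea as the paper's; the paper makes the ``patching'' concrete by introducing the intermediate point $z=|y|\,x/|x|$ and estimating $|\mu(x)-\mu(z)|$ and $|\mu(z)-\mu(y)|$ separately, which you may want to spell out.
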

\begin{proof}
 We prove that $\mu\in W^{1,q}$ for any $q<p^*$.
 
 We use a characterization of Soblev's spaces (see \cite[Proposition IX.3]{Brezis}): 
 $\mu \in W^{1,q}(\Om)$ if and only if there exists  a constant $C>0$ such that for every open $\omega\subset\subset\Om$ and 
for any $h\in \R^n$ with $|h|<\mathrm{dist}(\omega, \partial\Om)$ it holds
 \begin{equation*}
  \|\tau_h \mu - \mu\|_{L^q(\omega)}\leq C\,|h|.
 \end{equation*}
For the convexity of the function $|\cdot|^q$, remembering that $\A$ is $(1-\frac{n}{p^*})$-H\"{o}lder continuous 
and since for all $x\in R^n$ and $h\neq -x$ the inequality $\left|(x+h)\frac{|x|}{|x+h|} - x \right|\leq 2|h|$ holds, we have
\begin{equation*}
 \begin{split}
\|\tau_h \mu - &\mu\|_{L^q(\omega)}^q = \int_\omega \left|\langle\A(x+h)\frac{x+h}{|x+h|},\frac{x+h}{|x+h|}\rangle - \langle\A(x)\frac{x}{|x|},\frac{x}{|x|}\rangle\right|^q\,dx\\ 
&=\int_\omega \bigg|\langle(\A(x+h)-\A(x))\frac{x+h}{|x+h|},\frac{x+h}{|x+h|}\rangle + \langle\left(\A(x)-\A(\underline{0})\right)\left(\frac{x+h}{|x+h|}+\frac{x}{|x|}\right),\left(\frac{x+h}{|x+h|}-\frac{x}{|x|}\right)\rangle\bigg|^q\,dx\\
&\leq 2^{q-1}\bigg(\int_\omega |\A(x+h)-\A(x)|^q\,dx + \int_\omega 2^q \bigg|\frac{\A(x)-\A(\underline{0})}{|x|}\bigg|^q\,\,\bigg| (x+h)\frac{|x|}{|x+h|} - x \bigg|^q\ \bigg)\\
&\leq 2^{q-1}\bigg(c |h|^q + 4^q |h|^q \int_\omega \frac{1}{|x|^{n\frac{q}{p^*}}}\,dx\bigg) = C\,|h|^q,  
\end{split}
\end{equation*}
where in the last equality, we rely on $|x|^{-\frac{nq}{p^*}}$ being integrable if and only if $q<p^*$.
By Sobolev embedding theorem, we have $\mu\in C^{0,1-\frac{n}{q}}$ for any $q<p^*$. 

Thanks to the structure of $\mu$ 
we can earn more regularity. In particular $\mu\in C^{0,\c}$ with $\c=1-\frac{n}{p^*}$.
We start off proving the inequality when one of the two points is $\underline{0}$:
\begin{align*}
 |\mu(x)-\mu(\underline{0})|&=\bigg|\langle \A(x)\frac{x}{|x|}, \frac{x}{|x|}\rangle - 1\bigg| = \bigg|\langle\A(x)\frac{x}{|x|}, \frac{x}{|x|}\rangle - \langle\frac{x}{|x|}, \frac{x}{|x|}\rangle\bigg|\\
		& =\bigg|\langle (\A(x)-\A(\underline{0}))\frac{x}{|x|}, \frac{x}{|x|}\rangle\bigg| = [A]_{C^{0,\c}} \, |x|^\c.
\end{align*}
Let us assume now that $x,y\neq \underline{0}$ and prove the inequality in the remaining case. Let $z=|y|\frac{x}{|x|}$ then  
\begin{equation*}
 |\mu(x)-\mu(y)|\leq |\mu(x)-\mu(z)| + |\mu(z)-\mu(y)|.
\end{equation*}
As $\frac{z}{|z|}=\frac{x}{|x|}$ 
\begin{equation*}
 |\mu(x)-\mu(z)|=\bigg|\langle (\A(x)-\A(z))\frac{x}{|x|}, \frac{x}{|x|}\rangle\bigg| \leq [A]_{C^{0,\c}} \, |x-z|^\c,
\end{equation*}
while by $|z|=|y|=r$
\begin{equation*}
\begin{split}
 |\mu(z)&-\mu(y)|=\bigg|\langle\A(z)\frac{z}{r},\frac{z}{r}\rangle - \langle\A(y)\frac{y}{r},\frac{y}{r}\rangle\bigg| 
 \leq \bigg|\langle(\A(z)-\A(y))\frac{z}{r},\frac{z}{r}\rangle\bigg| + \bigg|\langle\A(y))\frac{z}{r},\frac{z}{r}\rangle - \langle\A(y)\frac{y}{r},\frac{y}{r}\rangle\bigg| \\
 &\leq [\A]_{C^{0,\c}}|z-y|^\c + \bigg|\langle (\A(y)-\A(0))\frac{z+y}{r},\frac{z-y}{r}\rangle\bigg|
 \leq [\A]_{C^{0,\c}}\bigg(|z-y|^\c + 2 \frac{|z-y|^{1-\c}}{r^{1-\c}} |z-y|^\c \bigg)\\
 &\leq [\A]_{C^{0,\c}}\bigg(|z-y|^\c + 2^{1-\c}|z-y|^\c \bigg)\leq C [\A]_{C^{0,\c}}|z-y|^\c.
 \end{split}
\end{equation*}
Therefore, since $|x-z|=| |x|-|y||\leq |x-y|$ and $|z-y|\leq |z-x|+|x-y|\leq 2|x-y|$
we have the thesis
\begin{equation*}
|\mu(x)-\mu(y)|\leq C [\A]_{C^{0,\c}}|x-y|^\c.
\end{equation*}
\end{proof}
We introduce rescaled volume and boundary energies
\begin{equation}
\begin{split}\label{Er}
 \mathcal{E}(r)&:=\mathcal{E}[u,B_r]=\int_{B_r}\left(\langle \A(x)\nabla u(x),\nabla u(x)\rangle +2f(x)u(x)\right)\,dx \\
 &=r^{n+2}\int_{B_1}\left(\langle \A(rx)\nabla u_r(x),\nabla u_r(x)\rangle +2f(rx)u_r(x)\right)\,dx
\end{split}
\end{equation}
\begin{equation}\label{Hr}
  \mathscr{H}(r):=\int_{\partial B_r}\mu(x)u^2(x)\,d\mathcal{H}^{n-1}=r^{n+3}\int_{\partial B_1}\mu(rx)u_r^2(x)\,d\mathcal{H}^{n-1}.
\end{equation}
We now introduce an energy ``à la Weiss'' combining and rescaling the terms above:
\begin{equation}\label{Phir}
 \Phi(r):=r^{-n-2}\mathcal{E}(r) - 2\,r^{-n-3}\mathscr{H}(r).
\end{equation}

\begin{oss}
 By \eqref{abuso_notazione}, \eqref{Er}, \eqref{Hr} and Proposition \ref{prop u_r limitata W2p} we have
 \begin{equation}\label{E H-O_grande}
  \begin{split}
  \mathcal{E}(r)&=\int_{B_r}(|\nabla u_r|^2 + 2u)\,dx + O(r^{n+2+\min(\c,\a)})\stackrel{\eqref{u e grad u limitate}}{=} O(r^{n+2}),  \\
  \mathscr{H}(r)&=\int_{\partial B_r}u^2\,d\mathcal{H}^{n-1} + O(r^{n+3+\c})\stackrel{\eqref{u e grad u limitate}}{=} O(r^{n+3}).  
  \end{split}
 \end{equation}
 Hence, the choice of the renormalized factors in \eqref{Phir}.
\end{oss}

To complete the notation in \eqref{notazioni trasformate} we show the trasformed version of \eqref{def mu} and \eqref{Phir}: 
\begin{equation}\label{notazioni LL e energie}
 \begin{split}
  \mu_{\LL(x_0)}(y):=&\langle \C_{x_0}(y)\nu(y),\nu(y)\rangle \qquad y\neq \underline{0}, \qquad\qquad \mu_{\LL(x_0)}(\underline{0}):=1,\\
\Phi_{\LL(x_0)}(r):=&\int_{B_1}\big(\langle\C_{x_0}(ry)\nabla u_{\LL(x_0),r}(y), \nabla u_{\LL(x_0),r}(y) \rangle
				    +2\,\frac{f_{\LL(x_0)}(ry)}{f(x_0)}u_{\LL(x_0),r}\big)\,dy\\
				   & -2\int_{\partial B_1}\mu_{\LL(x_0)}(ry)\,u_{\LL(x_0),r}^2(y)\,d\mathcal{H}^{n-1}
\end{split}
\end{equation}
\begin{oss}
We can note by the definition above and in view of Lemma~\ref{lem reg mu} $\Lambda^{-2}\leq \mu_{\LL(x_0)}(y)\leq \Lambda^2$ 
and $\mu_{\LL(x_0)}\in~C^{0,\c}(\Om)$.
\end{oss}
\subsection{Estimate of derivatives of $\mathcal{E}$ and $\mathscr{H}$}
To estimate the derivative of auxiliary energy $\Phi$ we estimate the derivative of addenda $\mathcal{E}$ and $\mathscr{H}$. 
Starting with $\mathcal{E}$, for this purpose, following Focardi, Gelli and Spadaro \cite{FGS},
we use a  generalization of Rellich–Necas’ identity due to Payne–Weinberger \cite[Lemma 3.4]{FGS} in order to calculate 
the derivative.

\begin{prop}\label{E'}
 There exists a constant $C_1>0$, $C_1=C_1(\l,C,\|\A\|_{W^{1+s,p}(\Om)})$, 
such that for $\mathcal{L}^1$-a.e. $r\in(0,\mathrm{dist}(\underline{0},\partial\Om))$, 
 \begin{equation}\label{estimate E'}
  \begin{split}
   \mathcal{E}'(r)&=2\int_{\partial B_r}\mu^{-1}\langle \A\nu, \nabla u \rangle^2\,d\mathcal{H}^{n-1} + \frac{1}{r}\int_{B_r}\langle\A\nabla u,\nabla u\rangle\,\diw(\mu^{-1}\A x)\,dx\\
   &-\frac{2}{r}\int_{B_r}f \langle\mu^{-1}\A x,\nabla u\rangle\,dx - \frac{2}{r}\int_{B_r}\langle\A\nabla u, \nabla^T(\mu^{-1}\A x)\nabla u\rangle\, dx\\
   &+2\int_{\partial B_r} f u\, d\mathcal{H}^{n-1} + \eps(r), 
  \end{split}
 \end{equation}
with $|\eps(r)|\leq C_1 \mathcal{E}(r)\,r^{-\frac{n}{p^*}}$. 
\end{prop}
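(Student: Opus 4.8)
The plan is to compute $\mathcal{E}'(r)$ by differentiating the representation $\mathcal{E}(r)=\int_{B_r}\langle\A\nabla u,\nabla u\rangle\,dx + 2\int_{B_r}fu\,dx$ and treating the two addenda separately. For the second one, $\frac{d}{dr}\int_{B_r}2fu\,dx = 2\int_{\partial B_r}fu\,d\mathcal{H}^{n-1}$ by the coarea formula, which produces the boundary term $2\int_{\partial B_r}fu\,d\mathcal{H}^{n-1}$ appearing in \eqref{estimate E'}; here one only needs $fu\in L^1$ along spheres, which holds since $u\in C^{1,\gamma}$ and $f\in L^\infty$. The heart of the matter is the Dirichlet part $\int_{B_r}\langle\A\nabla u,\nabla u\rangle\,dx$. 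Following \cite{FGS}, I would use the Payne--Weinberger generalization of the Rellich--Ne\v{c}as identity \cite[Lemma 3.4]{FGS} with the vector field $X=\mu^{-1}\A x$: multiplying the PDE \eqref{PDE_u} by $\langle X,\nabla u\rangle$ and integrating by parts over $B_r$ gives an identity relating $\int_{\partial B_r}$-terms (the normal-derivative squared term $2\int_{\partial B_r}\mu^{-1}\langle\A\nu,\nabla u\rangle^2$, which is where the specific choice of $X$ pays off because $X$ is parallel to $\nu$ on $\partial B_r$ with $\langle X,\nu\rangle=|x|=r$ cancelling the $\mu^{-1}\langle\A\nu,\nu\rangle$ factor), the interior terms $\frac1r\int_{B_r}\langle\A\nabla u,\nabla u\rangle\diw X$, $-\frac2r\int_{B_r}\langle\A\nabla u,\nabla^T X\,\nabla u\rangle$, and the right-hand-side term $-\frac2r\int_{B_r}f\langle X,\nabla u\rangle$.

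The subtle point, and the one I expect to be the main obstacle, is the appearance of $\nabla\A$ in the computation: the Rellich--Ne\v{c}as identity with variable coefficients produces an extra term of the form $\int_{B_r}\langle(\partial_k\A)\nabla u,\nabla u\rangle X_k\,dx$ (plus the analogous contribution hidden inside $\diw X$ and $\nabla^T X$ through $\nabla\mu$ and $\nabla\A$). This is precisely the error term $\eps(r)$. I would isolate all terms carrying a derivative of $\A$ (equivalently of $\mu$, since $\mu$ is built from $\A$) into $\eps(r)$ and estimate them. The bound $|\eps(r)|\le C_1\mathcal{E}(r)\,r^{-n/p^*}$ should come from: (i) $\nabla\A\in L^{p^*}$ by $(H1)$ and Theorem \ref{cor imm fraz} ($W^{1+s,p}\hookrightarrow W^{1,p^*}$ with $p^*=\frac{np}{n-sp}$); (ii) $|x|$ is bounded on $B_r$ by $r$; (iii) $|\nabla u|^2$ is comparable to the energy density, and one uses H\"older's inequality with exponents $p^*$ and its conjugate together with the $C^{1,\gamma}$ bound on $\nabla u$ to convert $\int_{B_r}|\nabla\A||\nabla u|^2$ into $\|\nabla\A\|_{L^{p^*}(B_r)}\,\||\nabla u|^2\|_{L^{(p^*)'}(B_r)}$; the factor $r^{-n/p^*}$ emerges by comparing this against $\mathcal{E}(r)=O(r^{n+2})$ and tracking the scaling. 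One should also treat the $\nabla\mu$ contributions, using $\mu\in W^{1,q}$ for all $q<p^*$ from Lemma \ref{lem reg mu}, and the identity $\diw(\mu^{-1}\A x) = \mu^{-1}\diw(\A x) - \mu^{-2}\langle\nabla\mu,\A x\rangle$, again routing the $\nabla\mu$ piece into $\eps(r)$.

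Concretely, the steps in order are: (1) split $\mathcal{E}(r)$ and differentiate the linear part directly to get $2\int_{\partial B_r}fu$; (2) apply \cite[Lemma 3.4]{FGS} to the Dirichlet part with $X=\mu^{-1}\A x$, reading off the four main interior/boundary terms plus a remainder collecting all $\nabla\A$-type terms; (3) verify that on $\partial B_r$ the boundary contribution of the Rellich identity reduces to $2\int_{\partial B_r}\mu^{-1}\langle\A\nu,\nabla u\rangle^2\,d\mathcal{H}^{n-1}$ using $\langle X,\nu\rangle = r$ and $\mu=\langle\A\nu,\nu\rangle$ on $\partial B_r$; (4) assemble \eqref{estimate E'} and estimate $\eps(r)$ via H\"older, the embedding $W^{1+s,p}\hookrightarrow W^{1,p^*}$, and the growth $\mathcal{E}(r)=O(r^{n+2})$ to obtain $|\eps(r)|\le C_1\mathcal{E}(r)r^{-n/p^*}$ with $C_1$ depending only on $\l$, the $C^{1,\gamma}$-bound constant $C$, and $\|\A\|_{W^{1+s,p}(\Om)}$. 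Since the PDE holds only a.e. and $u\in W^{2,p^*}$, all integrations by parts are justified for $\mathcal{L}^1$-a.e. $r$ (those $r$ for which the relevant traces on $\partial B_r$ are well-behaved), which accounts for the "a.e. $r$" in the statement.
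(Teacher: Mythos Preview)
Your approach is essentially the same as the paper's: the paper also refers to \cite[Proposition~3.5]{FGS} for the Rellich--Ne\v{c}as/Payne--Weinberger computation with the vector field $X=\mu^{-1}\A x$, and notes that the only modification is in the estimate of $\eps(r)$, where H\"older's inequality with $\nabla\A\in L^{p^*}$ (rather than $L^\infty$) produces the extra factor $r^{-n/p^*}$.

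One small correction to your bookkeeping: the terms $\diw(\mu^{-1}\A x)$ and $\nabla^T(\mu^{-1}\A x)$ that appear explicitly in \eqref{estimate E'} already contain all of the $\nabla\mu$ contributions, so nothing involving $\nabla\mu$ needs to be routed into $\eps(r)$. The paper records in the remark following the proposition that
\[
\eps(r)=\frac{1}{r}\int_{B_r}\mu^{-1}\,\bigl(\nabla\A:\A x\otimes\nabla u\otimes\nabla u\bigr)\,dx,
\]
i.e.\ $\eps(r)$ is precisely the single extra term in the Rellich identity arising from the variability of $\A$. Your H\"older estimate on this term (using $|x|\le r$, $\|\nabla u\|_{L^\infty(B_r)}\le Cr$, $\nabla\A\in L^{p^*}$, and $\mathcal{E}(r)=O(r^{n+2})$) is exactly what the paper does.
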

\begin{proof}
For details of proof we refer to \cite[Proposition 3.5]{FGS}.
The difference consists in the different regularity of $\diw\A$ that in our case is $L^{p^*}$ instead of $L^\infty$.
In the estimate of $\varepsilon(r)$, made through H\"{o}lder inequality, the factor $r^{-\frac{n}{p^*}}$ appears. 
This is unbounded but at the same time it is integrable in $r$ and this will be crucial in proving the quasi-monotonicity formula.    
\end{proof}

\begin{oss}
The term $\eps(r)$ is exactly:
 \begin{equation*}
 \eps(r)=\frac{1}{r}\int_{B_r}\mu^{-1}\, (\nabla\A : \A x \otimes \nabla u\otimes \nabla u)\,dx.
\end{equation*}
In \cite[Proposition 3.5]{FGS} under the hypothesis $\A\in W^{1,\infty}(\Om,\R^n\times~\R^n)$, 
Focardi, Gelli and Spadaro showed the equality \eqref{estimate E'} with $\eps(r)$ as above, 
and proved that $|\eps(r)|\leq C\, \mathcal{E}(r)$.
\end{oss}

The next step is to estimate the derivative of $\mathscr{H}(r)$. By definition $\mathscr{H}(r)$ is a boundary integral; 
we follow the strategy of \cite[Proposition 3.6]{FGS} which consists in bringing us back to a volume 
integral using the Divergence Theorem and deriving through the Coarea formula. 
The difficulty is that we have to integrate  the function 
$\diw\A$ on $\partial B_r$, 
but by $(H1)$ $\diw\A$ is a function in $W^{s,p}(\Om)$ 
with $s>\frac{1}{p}$, and it is not, a priori, well defined on $\partial B_r$. 
Then, taking into account the concept of \emph{trace} we can prove a corollary of the  Coarea formula.

\begin{prop}\label{coarea-traccia}
 Let $\varphi\in W^{\l,p}(B_1)$ with $\l>\frac{1}{p}$. Then, denoting by $\c_0(\phi)$ the trace of $\phi$ (see Theorem \ref{toerema-traccia-fraz}),
 for $\mathcal{L}^1$-a.e. $r\in (0,1)$ it holds
 \begin{equation}
  \frac{d}{dr}\bigg(\int_{B_r} \varphi\,dx\bigg) = \int_{\partial B_r} \c_0(\varphi)\,d\mathcal{H}^{n-1}.
 \end{equation}
 \end{prop}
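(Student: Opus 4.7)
The plan is to establish the integrated identity
\begin{equation*}
\int_{B_R}\varphi\,dx=\int_0^R\int_{\partial B_r}\c_0(\varphi)\,d\mathcal{H}^{n-1}\,dr\qquad \forall R\in(0,1)
\end{equation*}
by a density argument, and then to deduce the pointwise derivative formula via the Lebesgue differentiation theorem. First I would fix a sequence $(\varphi_k)_k\subset C^\infty(\overline{B_1})$ with $\varphi_k\to\varphi$ in $W^{\l,p}(B_1)$, which exists since $B_1$ is a Lipschitz domain. For each such smooth $\varphi_k$, the classical coarea/polar-coordinates formula gives exactly the above identity with $\varphi$ replaced by $\varphi_k$, upon noting that $\c_0(\varphi_k)$ coincides pointwise with the restriction $\varphi_{k|\partial B_r}$.

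The central step is then the passage $k\to\infty$. The left-hand side converges to $\int_{B_R}\varphi\,dx$ via the continuous embedding $W^{\l,p}(B_1)\hookrightarrow L^1(B_R)$. For the right-hand side I would appeal to the scale-dependent trace estimate \eqref{stima imm 1} of Remark~\ref{oss stime imm e traccia}, applied on every ball $B_r$ to the difference $\varphi_k-\varphi$:
\begin{equation*}
\|\c_0(\varphi_k-\varphi)\|_{L^1(\partial B_r;\mathcal{H}^{n-1})}\leq C\,r^{n-1-\frac{n}{p}}\,\|\varphi_k-\varphi\|_{W^{\l,p}(B_1)}.
\end{equation*}
Since $p>1$ the exponent satisfies $n-1-\frac{n}{p}>-1$, hence $\int_0^R r^{n-1-\frac{n}{p}}\,dr<\infty$, and integrating the previous inequality in $r$ over $(0,R)$ shows that the inner spherical integrals converge in $L^1((0,R);dr)$ to $\int_{\partial B_r}\c_0(\varphi)\,d\mathcal{H}^{n-1}$. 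This yields the integrated identity for the limit function $\varphi$ and every $R\in(0,1)$.

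The same trace estimate guarantees that $r\mapsto\int_{\partial B_r}\c_0(\varphi)\,d\mathcal{H}^{n-1}$ is locally integrable on $(0,1)$, so the Lebesgue differentiation theorem applied to the integrated identity produces the claimed pointwise derivative at $\mathcal{L}^1$-a.e.\ $r\in(0,1)$. The only genuinely delicate point is the uniform-in-$r$ control of traces over the one-parameter family of shrinking spheres $\partial B_r$: the standard trace operator on $\partial B_r$ is \emph{not} bounded uniformly as $r\to 0^+$, and it is precisely the explicit scaling $r^{n-1-n/p}$ in \eqref{stima imm 1}, together with its integrability near the origin (which uses $p>1$), that makes the convergence of the spherical integrals legitimate and closes the argument.
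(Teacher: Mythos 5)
Your argument is correct; it is a density argument like the paper's, but the limit passage is organized differently. The paper sets $G_j(r)=\int_{B_r}\varphi_j\,dx$ and $G(r)=\int_{B_r}\varphi\,dx$, verifies that both are absolutely continuous on $(0,1)$, identifies their weak derivatives via the coarea formula, and proves $G_j\to G$ in $W^{1,1}((0,1))$ using only the elementary Fubini-type bound $\|G_j'-G'\|_{L^1((0,1))}\le\|\varphi_j-\varphi\|_{L^1(B_1)}$; a subsequence then converges a.e., and this is combined with the fixed-$r$ continuity of the trace operator (equation \eqref{cont-j-traccia}) to conclude. You instead prove the integrated identity with $\c_0(\varphi)$ already on the right-hand side, controlling the spherical integrals uniformly in $r$ through the scaled trace estimate \eqref{stima imm 1} and the integrability of $r^{n-1-n/p}$ near $0$ (which uses $p>1$, harmless here since $(H1)$ forces $p>1$), and then differentiate by Lebesgue's theorem. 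Your route is quantitatively slightly heavier — it genuinely needs the $r$-dependence of the trace constant recorded in Remark \ref{oss stime imm e traccia}, not just trace continuity on a fixed sphere — but it is cleaner on the one delicate point of the statement: the trace $\c_0(\varphi)$ appears throughout, so you never have to reconcile, for a.e.\ $r$, the a.e.-defined restriction of $\varphi$ to $\partial B_r$ with its trace, a reconciliation the paper's proof performs implicitly when it merges the a.e.\ limit of $G_j'$ with \eqref{cont-j-traccia}. Both proofs are sound, and each would serve equally well for the application in Proposition \ref{H'}.
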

\begin{proof}
Let $(\varphi_j)_j\subset C^\infty(\overline{B_1})$ such that $\varphi_j\to \varphi$ in $W^{\l,p}(B_1)$. 
 For each function $g_j$, by the Coarea formula for $\mathcal{L}^1$-a.e. $r\in (0,1)$ it holds that
 \begin{equation}\label{coarea_j}
\frac{d}{dr}\bigg(\int_{B_r} \varphi_j\,dx\bigg) = \int_{\partial B_r} \varphi_j\,d\mathcal{H}^{n-1}.
 \end{equation}
 By the continuity of trace and Lebesgue's dominated convergence Theorem we have
 \begin{equation}\label{cont-j-traccia}
  \lim_j \int_{\partial B_r} \varphi_j\,d\mathcal{H}^{n-1} = \int_{\partial B_r} \c_0(\varphi)\,d\mathcal{H}^{n-1}.
 \end{equation}
Let us now prove that $\lim_j\frac{d}{dr}\bigg(\int_{B_r} \varphi_j\,dx\bigg)=\frac{d}{dr}\bigg(\int_{B_r} \varphi\,dx\bigg)$.

In this connection we define the function $G(r):=\int_{B_r} \varphi\,dx$ and the sequence $G_j(r):=\int_{B_r} \varphi_j\,dx$; 
we can prove that $G_j\to G$ in $W^{1,1}((0,1))$.

We recall that by a well-known characterization, the functions in $W^{1,1}$ on an interval are absolutely continuous functions.
In order to deduce that $G,G_j\in W^{1,1}$ we have to prove that 
for any $\eps>0$ there exists a $\d>0$ such that for any finite sequence of disjoint intervals
 $(a_k,b_k)\subset (0,1)$ the condition $\sum_k |G_j(b_k)-G_j(a_k)|<\eps$ holds if $\sum_k |b_k-a_k|<\d$.
 
 Therefore, we estimate as follows
\begin{align*}
 \sum_k |G_j(b_k)-G_j(a_k)|=\sum_k \bigg|\int_{B_{b_k}\setminus B_{a_k}} \varphi_j\,dx\bigg|\leq \int_{\bigcup_k (B_{b_k}\setminus B_{a_k})} |\varphi_j|\,dx<\eps,
\end{align*}
where in the last inequality, we use the absolute continuity of the integral and
\begin{equation*}
 \mathcal{L}^n(\cup_k (B_{b_k}\setminus B_{a_k})=n\omega_n\int_{\cup_k (b_k,a_k)} r^{n-1}\,dr\leq n\omega_n\int_{1-\d}^1 r^{n-1}\,dr \leq n\omega_n \d.
\end{equation*}
The previous argument holds for $G$ as well. Thus $G$ and $G_j$ are differentiable $\mathcal{L}^1$-a.e. on~$(0,1)$. 
On the other hand by the Coarea formula, we can represent the weak derivative 
of $G_j$ in the following way:
\begin{equation*}
 G_j'(r)=\int_{\partial B_r} \varphi_j\,d\mathcal{H}^{n-1}, \quad G'(r)=\int_{\partial B_r} \varphi\,d\mathcal{H}^{n-1} \qquad\qquad \mathcal{L}^1\textit{-a.e.}\,\,r\in(0,1).
\end{equation*}
Thus
\begin{align*}
 \|G_j-G\|_{L^1((0,1))}&=\int_0^1 \Big|\int_{B_r}(\varphi_j-\varphi)\,dx\Big|\,dr\leq \|\varphi_j-\varphi\|_{L^1(B_1)}\xrightarrow{j\to \infty} 0,\\
 \|G_j'-G'\|_{L^1((0,1))}&=\int_0^1 \Big|\int_{\partial B_r}(\varphi_j-\varphi)\,d\mathcal{H}^{n-1}\Big|\,dr\leq \|\varphi_j-\varphi\|_{L^1(B_1)}\xrightarrow{j\to \infty} 0,
\end{align*}
therefore up to subsequence $G_j'\to G'$ $\mathcal{L}^1$-a.e. $(0,1)$; then by combining together this, \eqref{coarea_j} and 
\eqref{cont-j-traccia} we have the thesis. 
\end{proof}

We estimate the derivative of $\mathscr{H}(r)$.\\
We define an exponent $\Theta=\Theta(s,p,n,t_0)$, with $t_0\in \left(\frac{n-sp}{p(n-1)}, s\right)$ as in Remark \ref{oss stime imm e traccia},
for which the term $r^{-\frac{n}{\Theta}}$ is integrable.
For this purpose we define:
\begin{equation}\label{B}
		 \Theta=\Theta(s,p,n,t_0)=\left\{  \begin{array}{ll}
			  p &		 		   		\quad \textrm{if} \,\,	p>n\\
			  \frac{np}{n-(s-t_0)p} &         		\quad \textrm{if} \,\,	p\leq n.
			\end{array} \right.
\end{equation}
\begin{oss}\label{choice (H1)}
If $p>n$ the condition is trivial. If instead $p\leq n$ the condition $\frac{np}{n-(s-t_0)p}>n$
is equivalent to $t_0 < s+1-\frac{n}{p}$. Now such a $t_0$ exists if and only if $\frac{n-sp}{p(n-1)}<s + 1-\frac{n}{p}$
which is equivalent to requiring that $p>\frac{n^2}{n(1+s)-1}$. This explains the choice of the condition $(H1)$.
\end{oss}

\begin{prop}\label{H'}
There exists a positive constant $C_2=C_2(\|\A\|_{W^{1+s,p}})$ such that 
for $\mathcal{L}^1$-a.e. $r\in(0,\mathrm{dist}(\underline{0},\partial \Om))$ the following holds
\begin{equation}\label{estimate H'}
  \mathscr{H}'(r) = \frac{n-1}{r}\mathscr{H}(r) + 2\int_{\partial B_r} u \langle \A\nu,\nabla u \rangle\, d\mathcal{H}^{n-1} + h(r),
 \end{equation}
with $|h(r)|\leq C_2 \mathscr{H}(r)r^{-\frac{n}{\Theta}}$, where the constant $\Theta$ is given in \eqref{B}.
\end{prop}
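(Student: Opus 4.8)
The plan is to compute $\mathscr{H}'(r)$ directly from the definition $\mathscr{H}(r)=\int_{\partial B_r}\mu u^2\,d\mathcal{H}^{n-1}$, converting the boundary integral to a volume integral so that differentiation in $r$ becomes the Coarea formula. Concretely, since $\mu u^2\in W^{1,q}(\Om)$ for suitable $q$ (the product of the $C^{0,\c}\cap W^{1,q}$ function $\mu$ from Lemma~\ref{lem reg mu} with $u^2$, where $u\in W^{2,p^*}\cap C^{1,\c}$), one has by the Divergence Theorem
\begin{equation*}
\mathscr{H}(r)=\int_{\partial B_r}\mu u^2\,d\mathcal{H}^{n-1}=\int_{B_r}\diw\!\Big(\mu u^2\frac{x}{|x|}\Big)\,dx,
\end{equation*}
and then Proposition~\ref{coarea-traccia} (applied to the $W^{\l,p}$ integrand with $\l>\frac1p$, which is exactly why we need the trace version of Coarea) gives $\mathscr{H}'(r)=\int_{\partial B_r}\c_0\big(\diw(\mu u^2 x/|x|)\big)\,d\mathcal{H}^{n-1}$.

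The next step is to expand the divergence. Writing $\nu=x/|x|$ one has $\diw(\mu u^2\nu)=\nu\cdot\nabla(\mu u^2)+\mu u^2\diw\nu=\nu\cdot\nabla(\mu u^2)+\frac{n-1}{r}\mu u^2$ on $\partial B_r$. The term $\frac{n-1}{r}\mu u^2$ integrates to $\frac{n-1}{r}\mathscr{H}(r)$, which is the first term on the right of \eqref{estimate H'}. For the remaining term, $\nu\cdot\nabla(\mu u^2)=2\mu u\,(\nu\cdot\nabla u)+u^2(\nu\cdot\nabla\mu)$. On $\partial B_r$, using $\mu\nu=\mu(x)\nu(x)$ and the identity $\mu\nu\cdot\nabla u=\langle\mu\,\nu,\nabla u\rangle$; here one replaces $\mu\nu$ by $\A\nu$ up to a lower-order correction, since $\langle\A\nu,\nu\rangle=\mu$, so $2\mu u(\nu\cdot\nabla u)=2u\langle\A\nu,\nabla u\rangle-2u\langle(\A\nu-\mu\nu),\nabla u\rangle$, and the correction $\A\nu-\mu\nu$ is the tangential part of $\A\nu$, which is controlled by the H\"older seminorm of $\A$ near $\underline 0$ times $|x|^{\c}$ (using $\A(\underline0)=I_n$). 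This produces the second term $2\int_{\partial B_r}u\langle\A\nu,\nabla u\rangle\,d\mathcal{H}^{n-1}$ plus an error. Thus $h(r)$ collects: (a) the term $\int_{\partial B_r}u^2\,\c_0(\nu\cdot\nabla\mu)\,d\mathcal{H}^{n-1}$, and (b) the tangential correction $-2\int_{\partial B_r}u\langle\A\nu-\mu\nu,\nabla u\rangle\,d\mathcal{H}^{n-1}$.

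It remains to prove the bound $|h(r)|\leq C_2\,\mathscr{H}(r)\,r^{-n/\Theta}$. For term (a), $\nabla\mu\in W^{s',p'}$-type regularity (it lies in $W^{1,q}$ for $q<p^*$, but more precisely $\nabla\mu$ inherits a fractional Sobolev regularity allowing a trace on $\partial B_r$); one estimates $\|\c_0(\nabla\mu)\|_{L^1(\partial B_r)}$ using the scaled trace inequality \eqref{stima imm q} from Remark~\ref{oss stime imm e traccia}, which is precisely where the exponent $\Theta$ from \eqref{B} and the factor $r^{n-1}r^{-(n-(s-t_0)p)/p}=r^{n-1-n/\Theta}\cdot r^{\text{(something)}}$ enter; combined with $\|u\|^2_{L^\infty(B_r)}\leq Cr^4$ from \eqref{u e grad u limitate} and $\|\nabla u\|_{L^\infty(B_r)}\leq Cr$, and with $\mathscr{H}(r)=O(r^{n+3})$, one matches powers to get the claimed $r^{-n/\Theta}$. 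For term (b), $|\A\nu-\mu\nu|\leq C|x|^{\c}$ with $\c=1-n/p^*\geq 1-n/\Theta$ (since $\Theta\leq p^*$), and $|u\langle\cdots,\nabla u\rangle|\lesssim r^3$ on $\partial B_r$, so this term is $O(r^{n+2+\c})=O(\mathscr{H}(r)r^{\c-1})$ with $\c-1\geq -n/\Theta$; this is even better than required. The main obstacle is term (a): making rigorous that $\diw(\mu u^2\nu)$ has a trace on $\partial B_r$ — i.e. checking the integrand genuinely lies in $W^{\l,p}(B_1)$ with $\l>1/p$ so that Proposition~\ref{coarea-traccia} applies — and then tracking the exact powers of $r$ through the scaled trace estimate \eqref{stima imm q} to land on $r^{-n/\Theta}$ rather than a worse exponent. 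I expect this bookkeeping, together with justifying the product rule for $\diw(\mu u^2\nu)$ at the level of fractional Sobolev regularity, to be the delicate part; everything else is the classical computation of Focardi–Gelli–Spadaro~\cite[Proposition 3.6]{FGS}.
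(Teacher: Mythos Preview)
Your route differs from the paper's in the choice of vector field used to convert $\mathscr{H}(r)$ to a volume integral, and this difference is exactly where your argument runs into trouble. You write $\mathscr{H}(r)=\int_{B_r}\diw(\mu u^2\,x/|x|)\,dx$; the paper instead observes that on $\partial B_r$ one has $\mu u^2=\frac{1}{r}\,u^2\langle\A x,\nu\rangle$ (from the definition $\mu=\langle\A\nu,\nu\rangle$), and hence
\[
\mathscr{H}(r)=\frac{1}{r}\int_{B_r}\diw(u^2\A x)\,dx
=\frac{2}{r}\int_{B_r}u\,\nabla u\cdot\A x\,dx+\frac{1}{r}\int_{B_r}u^2\,\mathrm{Tr}\A\,dx+\frac{1}{r}\int_{B_r}u^2\,\diw\A\cdot x\,dx.
\]
After differentiating (product rule in $r$ plus Coarea on each volume integral) the error term $h(r)$ consists of $I=\frac{1}{r}\int_{\partial B_r}u^2(\mathrm{Tr}\A-n\mu)$ and $II=\frac{1}{r}\int_{\partial B_r}u^2\,\c_0(\diw\A)\cdot x$. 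The point of this choice is that the only non-classical integrand is $u^2\,\diw\A\cdot x$, and $\diw\A\in W^{s,p}$ with $s>1/p$ is precisely hypothesis $(H1)$; multiplication by the $C^{1,\c}$ function $u^2 x$ preserves this regularity, so Proposition~\ref{coarea-traccia} applies and Remark~\ref{oss stime imm e traccia} gives $\|\c_0(\diw\A)\|_{L^1(\partial B_r)}\leq C\,r^{n-1-n/\Theta}$ directly.

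With your vector field, the integrand $\diw(\mu u^2\nu)$ contains $u^2\,\nu\cdot\nabla\mu$ and $\mu u^2(n-1)/|x|$, and you must show this lies in some $W^{\l,p}(B_r)$ with $\l>1/p$ for Proposition~\ref{coarea-traccia} to apply, and then bound $\|\c_0(\nu\cdot\nabla\mu)\|_{L^1(\partial B_r)}$ via Remark~\ref{oss stime imm e traccia}. But Lemma~\ref{lem reg mu} gives only $\mu\in W^{1,q}$ for $q<p^*$, i.e.\ $\nabla\mu\in L^q$, with no fractional Sobolev control; the presence of $\nu=x/|x|$ (discontinuous at $\underline{0}\in B_r$) in $\mu$ obstructs $\nabla\mu\in W^{s,p}(B_r)$, so neither Proposition~\ref{coarea-traccia} nor the scaled trace estimate is available as stated. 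Your proposal flags this as ``bookkeeping'', but it is the substantive obstacle, not a detail. (One can in fact rescue the estimate by noting that the \emph{radial} derivative satisfies $\nu\cdot\nabla\mu=\sum_{i,j,k}\nu_i\nu_j\nu_k\,\partial_i a_{jk}$, a bounded multiple of $\nabla\A\in W^{s,p}$; but this observation is absent from your sketch, and even with it you would still owe the $W^{\l,p}$ regularity of the full integrand on $B_r$ through the origin.) The paper's choice of vector field sidesteps all of this.
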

\begin{proof}
 From the Divergence Theorem we write $\mathscr{H}(r)$ as volume integral
 \begin{align*}
  \mathscr{H}(r)& = \frac{1}{r}\int_{\partial B_r} u^2(x)\langle\A(x)x,\nu \rangle\,d\mathcal{H}^{n-1}= \frac{1}{r}\int_{B_r} \diw\big(u^2(x)\A(x)x\big)\,dx \\
  &=\frac{2}{r}\int_{B_r}u\nabla u\cdot \A(x)x\,dx + \frac{1}{r} \int_{B_r}u^2(x)\mathrm{Tr}\A\,dx + \frac{1}{r} \int_{B_r}u^2(x)\diw\A(x)\cdot x\,dx.
 \end{align*}
By taking Coarea formula and Proposition \ref{coarea-traccia} into account, we have
\begin{align*}
\mathscr{H}'(r)= &-\frac{1}{r}\mathscr{H}(r) +2\int_{\partial B_r}\!\!u \langle \A\nu, \nabla u\rangle \,d\mathcal{H}^{n-1}
 + \frac{1}{r} \int_{\partial B_r}\!\!u^2\,\mathrm{Tr}\A \,d\mathcal{H}^{n-1} + \frac{1}{r} \int_{\partial B_r}\!\!u^2\c_0\big(\diw\A(x)\big)\cdot x\,d\mathcal{H}^{n-1}\\
=&\,\frac{n-1}{r}\mathscr{H}(r) + 2\int_{\partial B_r}\mu \langle \A\nu,\nabla u \rangle\, d\mathcal{H}^{n-1} + h(r),
\end{align*}
with 
\begin{equation}\label{hr}
h(r)=\frac{1}{r} \int_{\partial B_r}u^2\,\big(\mathrm{Tr}\A - n\mu\big) \,d\mathcal{H}^{n-1} + \frac{1}{r} \int_{\partial B_r}u^2\c_0\big(\diw\A(x)\big)\cdot x\,d\mathcal{H}^{n-1}=:I+II.     
\end{equation}
We estimate separately the two terms.

For the first term let us recall that the H\"{o}lder continuity of $\A$ and $\mu$, the condition \eqref{u e grad u limitate}
and the fact that $\A(\underline{0})=I_n$ and $\mu(\underline{0})=1$ hold, we have:
\begin{equation}\label{1add-H'}
\begin{split}
 |I|\, &=\frac{1}{r} \bigg|\int_{\partial B_r}u^2\,\sum_i \big(a_{ii}(x)- \mu(x)\big) \,d\mathcal{H}^{n-1}\bigg|
 \leq\frac{1}{r} \int_{\partial B_r}\!\!\!\!u^2\,\sum_i \big(|a_{ii}(x)-a_{ii}(\underline{0})|+ |\mu(\underline{0}) - \mu(x)|\big) \,d\mathcal{H}^{n-1}\\
 &\leq C'r^{n+3-\frac{n}{p^*}} \leq C'\mathscr{H}(r)\, r^{-\frac{n}{p^*}},
\end{split}
\end{equation}
where in the last inequality we use \eqref{E H-O_grande}.

For the second term from the H\"{o}lder inequality, by \eqref{u e grad u limitate}
and recalling Remark \ref{oss stime imm e traccia} according to which $\c_0(\diw\A)\in L^1(\partial B_r,\R^n;\mathcal{H}^{n-1})$ we have: 
\begin{equation}\label{2add-H'}
\begin{split}
 |II|\,&\leq \frac{1}{r} \int_{\partial B_r} u^2\big|\c_0\big(\diw\A\big)(x)\big|\,|x|\,d\mathcal{H}^{n-1}
 \leq C'r^4\||\c_0(\diw\A)|\|_{L^1(\partial B_r,\R^n;\mathcal{H}^{n-1})}. 
\end{split}
\end{equation}
We can now analyze separately the two cases $p>n$ and $p\leq n$.

We start with the case $p>n$. We use \eqref{stima imm 1}, \eqref{E H-O_grande} in \eqref{2add-H'} to obtain  
\begin{equation}\label{2add H' p}
\begin{split}
 |II|\,\leq C\, \|\diw\A\|_{W^{s,p}(B_r,\R^n;\mathcal{H}^{n-1})}\,r^{n+3}\, r^{-\frac{n}{p}} \leq C\,\|\diw\A\|_{W^{p,s}(\Omega,\R^n;\mathcal{H}^{n-1})}\,\mathscr{H}(r)\,r^{-\frac{n}{p}}
 \leq C\,\mathscr{H}(r)\,r^{-\frac{n}{p}}.
 \end{split}
 \end{equation}

If $p\geq n$ by \eqref{stima imm q} we have 
 \begin{equation*}
\|\c_0(\diw\A)\|_{L^1(\partial B_r,\R^n; H^{n-1})}\leq C\, r^{n-1}\,r^{-\frac{n-(s-t_0)p}{p}} \|\diw\A\|_{W^{t_0,\frac{np}{n-(s-t_0)p}}(B_r,\R^n;\mathcal{H}^{n-1})}.
 \end{equation*}
Hence, recalling \eqref{2add-H'} and \eqref{E H-O_grande}
\begin{equation}\label{2add H' q}
\begin{split}
 |II|\,&\leq C\, \|\diw\A\|_{W^{t_0,\frac{np}{n-(s-t_0)p}}(B_r,\R^n;\mathcal{H}^{n-1})}\,r^{n+3}\, r^{-\frac{n-(s-t_0)p}{p}}\\
 &\leq C\,\|\diw\A\|_{W^{t_0,\frac{np}{n-(s-t_0)p}}(\Omega,\R^n;\mathcal{H}^{n-1})}\,\mathscr{H}(r)\,r^{-\frac{n-(s-t_0)p}{p}}\\
 &\leq C\,\|\diw\A\|_{W^{s,p}(\Omega,\R^n;\mathcal{H}^{n-1})}\,\mathscr{H}(r)\,r^{-\frac{n-(s-t_0)p}{p}} \leq C\,\mathscr{H}(r)\,r^{-\frac{n-(s-t_0)p}{p}}
 \end{split}
 \end{equation}
So, assuming the notation introduced in \eqref{B},
by combining together \eqref{1add-H'}, \eqref{2add H' q} and \eqref{2add H' p}, and recalling that $\Theta<p^*$, we have 
\begin{equation*}
|h(r)|\leq C'\mathscr{H}(r)\, r^{-\frac{n}{p*}} + \bar{C}\mathscr{H}(r)\,r^{-\frac{n}{\Theta}} \leq C_2\mathscr{H}(r)\,r^{-\frac{n}{\Theta}}.
\end{equation*}
\end{proof}

\begin{oss}
In \cite[Proposition 3.6]{FGS} Focardi, Gelli and Spadaro showed the estimate \eqref{estimate H'} with $h(r)$ given in \eqref{hr}. 
Under the hypothesis $\A\in W^{1,\infty}(\Om,\R^n\times~\R^n)$, 
the term $\diw\A(x)\in L^\infty(\Om, \R^n)$, so they obtained $|h(r)|\leq C \mathscr{H}(r)$.
\end{oss}

\subsection{Proof of Weiss’s quasi-monotonicity formula}
In this section we prove a Weiss’ quasi-monotonicity formula that is one of the main results of the paper. 
The plan of proof is the same as \cite[Theorem 3.7]{FGS}. The difference, due to the lack of regularity of the coefficients,
consists in the presence of additional unbounded factors. These factors are produced in 
Proposition \ref{E'}, Proposition \ref{H'}, and from a freezing argument, and they include: 
$r^{-\frac{n}{p^*}}$, $r^{-\frac{n}{\Theta}}$ and $\frac{\omega(r)}{r}$.
The key observation is that, for our hypotheses, these terms are integrable, so we are able to obtain the formula.
For completeness we report the proof with all the details.

\begin{proof}[Proof of Theorem \ref{Weiss}]
Assume the definition of $\Phi(r)$ by \eqref{Phir}:
 \begin{equation*}
 \Phi(r):=r^{-n-2}\mathcal{E}(r) - 2\,r^{-n-3}\mathscr{H}(r).
\end{equation*}
Then for $\mathcal{L}^1$-a.e. $r\in \mathrm{dist}(\underline{0},\partial\Om)$ we have
\begin{equation}
 \Phi'(r)=\frac{\mathcal{E}'(r)}{r^{n+2}}-(n+2)\frac{\mathcal{E}(r)}{r^{n+3}} - 2\frac{\mathscr{H}'(r)}{r^{n+3}} +2(n+3)\frac{\mathscr{H}(r)}{r^{n+4}}.
\end{equation}
By Proposition \ref{E'} we have
\begin{equation*}
 \begin{split}
  &\frac{\mathcal{E}'(r)}{r^{n+2}}-(n+2)\frac{\mathcal{E}(r)}{r^{n+3}}\geq \frac{2}{r^{n+2}}\int_{\partial B_r}\mu^{-1}\langle \A\nu, \nabla u \rangle^2\,d\mathcal{H}^{n-1}
   + \frac{1}{r^{n+3}}\int_{B_r}\langle\A\nabla u,\nabla u\rangle\,\diw(\mu^{-1}\A x)\,dx\\ 
   &- \frac{2}{r^{n+3}}\int_{B_r}f \langle\mu^{-1}\A x,\nabla u\rangle\,dx
   - \frac{2}{r^{n+3}}\int_{B_r}\langle\A\nabla u, \nabla^T(\mu^{-1}\A x)\nabla u\rangle\, dx + \frac{2}{r^{n+2}}\int_{\partial B_r} f u\, d\mathcal{H}^{n-1}\\
   &-\frac{C_1}{r^{n+2}}\frac{\mathcal{E}(r)}{r^{\frac{n}{p^*}}}
   - \frac{n+2}{r^{n+3}}\int_{B_r}\langle\A\nabla u, \nabla u\rangle\, dx - \frac{2(n+2)}{r^{n+3}}\int_{B_r} f u\,dx.  
 \end{split}
\end{equation*}
Then, integrating by parts and given \eqref{PDE_u}:
\begin{equation}\label{int-part1-f.mon-Weiss}
 \begin{split}
  \int_{B_r}\langle\A\nabla u, \nabla u\rangle\, dx + \int_{B_r} f u\,dx &= \int_{B_r}\langle\A\nabla u, \nabla u\rangle\, dx + \int_{B_r} u\diw(\A\nabla u)\,dx
  = \int_{\partial B_r} u \langle \A\nu, \nabla u \rangle \, d\mathcal{H}^{n-1}.
 \end{split}
\end{equation}
Thus, applying \eqref{int-part1-f.mon-Weiss} in four occurrences, we deduce
\begin{equation}\label{E'-E}
 \begin{split}
  &\frac{\mathcal{E}'(r)}{r^{n+2}}-(n+2)\frac{\mathcal{E}(r)}{r^{n+3}}\geq 
    - \frac{C_1}{r^{n+2}}\mathcal{E}(r)\,r^{-\frac{n}{p^*}} 
    + \frac{2}{r^{n+2}}\int_{\partial B_r}\mu^{-1}\langle \A\nu, \nabla u \rangle^2\,d\mathcal{H}^{n-1}\\
   &+ \frac{1}{r^{n+3}}\int_{B_r}\!\!\!\!\Big(\langle\A\nabla u,\nabla u\rangle\,\diw(\mu^{-1}\A x) - 2\langle\A\nabla u, \nabla^T(\mu^{-1}\A x)\nabla u\rangle - (n-2)\langle\A\nabla u, \nabla u\rangle\Big)\,dx\\ 
   & - \frac{2}{r^{n+3}}\int_{B_r}\!\!f \langle\mu^{-1}\A x,\nabla u\rangle\,dx + \frac{2}{r^{n+2}}\int_{\partial B_r}\!\! f u\, d\mathcal{H}^{n-1} -\frac{4}{r^{n+3}}\int_{\partial B_r}\!\! u \langle \A\nu, \nabla u \rangle \, d\mathcal{H}^{n-1}
    - \frac{2n}{r^{n+3}}\int_{B_r}\!\! f u\,dx.  
 \end{split}
\end{equation}
Instead the Proposition \ref{H'} leads to
\begin{equation}\label{H'-H}
 \begin{split}
 -2\frac{\mathscr{H}'(r)}{r^{n+3}} + 2(n+3)\frac{\mathscr{H}(r)}{r^{n+4}}\geq 
 - \frac{2 C_2}{r^{n+3}} \mathscr{H}(r)r^{-\frac{n}{\Theta}} + \frac{8}{r^{n+4}}\mathscr{H}(r) - \frac{4}{r^{n+3}}\int_{\partial B_r} u \langle \A\nu,\nabla u \rangle\, d\mathcal{H}^{n-1}.
 \end{split}
\end{equation}
By combining together \eqref{E'-E} and \eqref{H'-H} and since $p^*\geq \Theta$ we finally infer that
\begin{equation}\label{Phi' + Phi}
 \begin{split}
 &\Phi'(r) + (C'_1 \vee C_2) \Phi(r) r^{-\frac{n}{\Theta}}\geq \frac{2}{r^{n+2}}\int_{\partial B_r}\mu^{-1}\langle \A\nu, \nabla u \rangle^2\,d\mathcal{H}^{n-1}\\
   &+ \frac{1}{r^{n+3}}\int_{B_r}\!\!\!\!\Big(\langle\A\nabla u,\nabla u\rangle\,\diw(\mu^{-1}\A x) - 2\langle\A\nabla u, \nabla^T(\mu^{-1}\A x)\nabla u\rangle - (n-2)\langle\A\nabla u, \nabla u\rangle\Big)\,dx\\ 
   & - \frac{2}{r^{n+3}}\int_{B_r}f \langle\mu^{-1}\A x,\nabla u\rangle\,dx + \frac{2}{r^{n+2}}\int_{\partial B_r} f u\, d\mathcal{H}^{n-1} -\frac{4}{r^{n+3}}\int_{\partial B_r} u \langle \A\nu, \nabla u \rangle \, d\mathcal{H}^{n-1}\\
   & - \frac{2n}{r^{n+3}}\int_{B_r} f u\,dx + \frac{8}{r^{n+4}}\mathscr{H}(r) - \frac{4}{r^{n+3}}\int_{\partial B_r} u \langle \A\nu,\nabla u \rangle\, d\mathcal{H}^{n-1}\\
   &= \frac{2}{r^{n+2}}\int_{\partial B_r}\Big(\mu^{-1}\langle \A\nu, \nabla u \rangle^2 + 4\frac{u^2}{r^2}\mu - 4\frac{u}{r}\langle \A\nu, \nabla u \rangle\Big)\,d\mathcal{H}^{n-1}\\
   &+ \frac{1}{r^{n+3}}\int_{B_r}\!\!\!\!\Big(\langle\A\nabla u,\nabla u\rangle\,\diw(\mu^{-1}\A x) - 2\langle\A\nabla u, \nabla^T(\mu^{-1}\A x)\nabla u\rangle - (n-2)\langle\A\nabla u, \nabla u\rangle\Big)\,dx\\ 
   & - \frac{2}{r^{n+3}}\bigg(\int_{B_r}f \big(\langle\mu^{-1}\A x,\nabla u\rangle + nu\big)\,dx - r\int_{\partial B_r} f u\,d\mathcal{H}^{n-1}\bigg) 
   =: R_1 + R_2 + R_3.
 \end{split}
\end{equation}
We can estimate the three addenda separately.
\begin{equation}
 \begin{split}\label{R_1}
  R_1&= \frac{2}{r^{n+2}}\int_{\partial B_r}\mu \Big(\mu^{-2}\langle \A\nu, \nabla u \rangle^2 + 4\frac{u^2}{r^2} - 4\mu^{-1}\frac{u}{r}\langle \A\nu, \nabla u \rangle\Big)\,d\mathcal{H}^{n-1}\\
  &=\frac{2}{r^{n+2}}\int_{\partial B_r}\mu \Big(\langle \mu^{-1} \A\nu, \nabla u \rangle - 2\frac{u}{r} \Big)^2\,d\mathcal{H}^{n-1}.
  \end{split}
\end{equation}
Since $n= \diw(x)$ by \eqref{u e grad u limitate} we have
\begin{equation*}
 \begin{split}
  |R_2|&= \frac{1}{r^{n+3}}\left|\int_{B_r}\!\!\!\!\Big(\langle\A\nabla u,\nabla u\rangle\,\diw(\mu^{-1}\A x) - 2\langle\A\nabla u, \nabla^T(\mu^{-1}\A x)\nabla u\rangle - (n-2)\langle\A\nabla u, \nabla u\rangle\Big)\,dx\right|\\ 
     &= \left|\frac{1}{r^{n+3}}\int_{B_r}\!\!\!\!\Big(\langle\A\nabla u,\nabla u\rangle\,\diw(\mu^{-1}\A x- x) - 2\langle\A\nabla u, \nabla^T(\mu^{-1}\A x - x)\nabla u\rangle \Big)\,dx\right|\\
     &\leq \frac{C^2\,\Lambda}{r^{n+1}}\int_{B_r}\!\!\!\!\Big(|\diw(\mu^{-1}\A x- x) + 2|\nabla(\mu^{-1}\A x - x)|\Big)\,dx
     \leq \frac{C'\,\Lambda}{r^{n+1}}\int_{B_r}\!\!\!\!\Big(|\nabla(\mu^{-1}\A x - x)|\Big)\,dx,
 \end{split}
\end{equation*}
We estimate $|\nabla(\mu^{-1}\A x - x)|$:
 \begin{equation*}
 \begin{split}
  |\nabla(\mu^{-1}\A x - x)|&= \left|\nabla\left((\mu^{-1}\A - id)x\right)\right|=|\nabla(\mu^{-1}\A - id)x + (\mu^{-1}\A - id)|\\ 
  &=|\nabla(\mu^{-1})\otimes \A x + \mu^{-1} \nabla\A\, x + (\mu^{-1}\A - I_n)| \leq \Lambda r (|\nabla \A|+|\nabla \mu|) + C\,r^{1-\frac{n}{p^*}},
 \end{split}
\end{equation*}
where in the last inequality, we have used the $\c$-Holder continuity of $\A-\mu I_n$. Thus, from Lemma \ref{lem reg mu}
\begin{equation*}
 \begin{split}
  |R_2|&\leq \frac{C'\Lambda}{r^{n+1}}\int_{B_r}\Big(|\nabla(\mu^{-1}\A x - x)|\Big)\,dx
  \leq \frac{C''}{r^{n+1}}\int_{B_r}\left(r^{1-\frac{n}{p^*}}+r(|\nabla \A|+|\nabla \mu|)\right)\,dx \\
  &\leq C'''r^{-\frac{n}{p^*}} + \frac{C''}{r^n}\bigg(\Big(\int_{B_r}|\nabla\A|^{p^*}\,dx\Big)^\frac{1}{p*}(\omega_n r^n)^{1-\frac{1}{p^*}}+\Big(\int_{B_r}|\nabla\mu|^{q}\,dx\Big)^\frac{1}{q}(\omega_n r^n)^{1-\frac{1}{q}}\bigg)
  \leq \bar{C}r^{-\frac{n}{q}},
 \end{split}
\end{equation*}
for each $n<\Theta<q<p^*$, whence 
\begin{equation*}
 |R_2|\leq c\, \frac{\mathcal{E}(r)}{r^{n+2}}r^{-\frac{n}{q}}.
\end{equation*}
Moreover, from \eqref{Hr} and \eqref{mu limitata}
\begin{equation*}
 0\leq \frac{\mathscr{H}(r)}{r^{n+3}}\leq c\|u_r\|_{L^\infty}^2\leq c,
\end{equation*}
with a certain constant $c$ independent from $r$,
then
\begin{equation}\label{R_2}
 |R_2|\leq c\,\left(\frac{\mathcal{E}(r)}{r^{n+2}}-2\frac{\mathscr{H}(r)}{r^{n+3}}\right)r^{-\frac{n}{q}} + 2c\,\frac{\mathscr{H}(r)}{r^{n+3}}\, r^{-\frac{n}{q}}\leq c\,\Phi(r)r^{-\frac{n}{\Theta}}+c\,r^{-\frac{n}{\Theta}}.
\end{equation}
Finally, assuming that $n=\diw x$ and using the following identity, which is consequence of the divergence theorem
\begin{equation}
 \int_{B_r}\left(\langle x,\nabla u\rangle + u\,\diw x\right)\,dx = r\int_{\partial B_r} u\,d\mathcal{H}^{n-1}, 
\end{equation}
we have 
\begin{equation*}
 \begin{split}
R_3=& - \frac{2}{r^{n+3}}\bigg(\int_{B_r}f \big(\langle\mu^{-1}\A x,\nabla u\rangle + nu\big)\,dx - r\int_{\partial B_r} f u\,d\mathcal{H}^{n-1}\bigg) \\ 
=& - \frac{2}{r^{n+3}}\bigg(\int_{B_r}(f(x)-f(\underline{0})) \big(\langle\mu^{-1}\A x,\nabla u\rangle +nu\big)\,dx + f(\underline{0})\int_{B_r} \big(\langle\mu^{-1}\A x,\nabla u\rangle - u\,\diw x\big)\,dx\\ 
&- r\int_{\partial B_r} (f(x)-f(\underline{0})) u\,d\mathcal{H}^{n-1} - r\,f(\underline{0})\int_{\partial B_r} u\,d\mathcal{H}^{n-1} \bigg) \\
=& - \frac{2}{r^{n+3}}\bigg(\int_{B_r}(f(x)-f(\underline{0})) \big(\langle\mu^{-1}\A x,\nabla u\rangle +nu\big)\,dx + f(\underline{0})\int_{B_r} \big(\langle\mu^{-1}\A x - x,\nabla u\rangle \big)\,dx\\ 
&- r\int_{\partial B_r} (f(x)-f(\underline{0})) u\,d\mathcal{H}^{n-1} \bigg).
 \end{split}
\end{equation*}
Thus
\begin{equation}\label{R_3}
 \begin{split}
|R_3|=& \frac{2}{r^{n+3}}\bigg| f(\underline{0})\int_{B_r}\big(\langle\mu^{-1}\A x - x,\nabla u\rangle \big)\,dx\\
    &+\int_{B_r}(f(x)-f(\underline{0})) \big(\langle\mu^{-1}\A x,\nabla u\rangle +nu\big)\,dx 
     - r\int_{\partial B_r} (f(x)-f(\underline{0})) u\,d\mathcal{H}^{n-1} \bigg|\\
 \leq& \frac{c}{r^{n+1}} \bigg(\int_{B_r}|\A-\mu I_n|\,dx + \int_{B_r}|f(x)-f(\underline{0})|\,dx + r\int_{\partial B_r} |f(x)-f(\underline{0})|\,d\mathcal{H}^{n-1}\bigg)\\
 \leq& \frac{c}{r^{n+1}}(r^{n+1-\frac{n}{p^*}}+r^n\,\omega(r))\leq c \left(r^{-\frac{n}{p^*}}+ \frac{\omega(r)}{r}\right).
 \end{split}
\end{equation}
Now by combining together \eqref{Phi' + Phi}, \eqref{R_1}, \eqref{R_2} and \eqref{R_3}
we have
\begin{equation}
 \begin{split}
\Phi'(r) + C_3 \Phi(r)\,r^{-\frac{n}{\Theta}} + C_4\, \left(r^{-\frac{n}{\Theta}}+\frac{\omega(r)}{r}\right)\geq \frac{2}{r^{n+2}}\int_{\partial B_r}\mu \Big(\langle \mu^{-1} \A\nu, \nabla u \rangle - 2\frac{u}{r} \Big)^2\,d\mathcal{H}^{n-1}.
 \end{split}
\end{equation}
Multiplying the inequality by the integral factor $e^{\bar{C_3}r^{1-\frac{n}{\Theta}}}$ with $\bar{C_3}= \frac{C_3}{1-\frac{n}{\Theta}}$
we get
\begin{equation*}
 \left(\Phi(r)\,e^{\bar{C_3}r^{1-\frac{n}{\Theta}}}\right)' + C_4\,\left(r^{-\frac{n}{\Theta}}+\frac{\omega(r)}{r}\right)e^{\bar{C_3}r^{1-\frac{n}{\Theta}}}
 \geq \frac{2e^{\bar{C_3}r^{1-\frac{n}{\Theta}}}}{r^{n+2}}\int_{\partial B_r}\mu \Big(\langle \mu^{-1} \A\nu, \nabla u \rangle - 2\frac{u}{r} \Big)^2\,d\mathcal{H}^{n-1}
\end{equation*}
whence
\begin{equation}
 \begin{split}
 \frac{d}{dr}\bigg(\Phi(r)\,e^{\bar{C_3}r^{1-\frac{n}{\Theta}}} &+ C_4\!\!\!\int_0^r \!\left(t^{-\frac{n}{\Theta}}+\frac{\omega(t)}{t}\right)e^{\bar{C_3}t^{1-\frac{n}{\Theta}}}\,dt\bigg)
 \geq \frac{2e^{\bar{C_3}r^{1-\frac{n}{\Theta}}}}{r^{n+2}}\int_{\partial B_r}\!\!\!\mu \Big(\langle \mu^{-1} \A\nu, \nabla u \rangle - 2\frac{u}{r} \Big)^2\,d\mathcal{H}^{n-1}.
 \end{split}
\end{equation}
In particular, the quantity under the sign of the derivative, bounded by construction, is also monotonic, therefore  its limit exists
as $r\to 0^+$. It follows that $\Phi(0^+):=\lim_{r\to 0^+}\Phi(r)$ exists and is bounded.

Finally
\begin{equation}
 \begin{split}
 \Phi(r)-\Phi(0^+)\geq& -|\Phi(r)\,e^{\bar{C_3}r^{1-\frac{n}{\Theta}}}-\Phi(r)| + \Phi(r)\,e^{\bar{C_3}r^{1-\frac{n}{\Theta}}} + C_4\,\int_0^r \left(t^{-\frac{n}{\Theta}}+\frac{\omega(t)}{t}\right)e^{\bar{C_3}t^{1-\frac{n}{\Theta}}}\,dt\\
  &-\Phi(0^+) - C_4\,\int_0^r \left(t^{-\frac{n}{\Theta}}+\frac{\omega(t)}{t}\right)e^{\bar{C_3}t^{1-\frac{n}{\Theta}}}\,dt\\
  \geq& -|\Phi(r)|\,c'\,r^{1-\frac{n}{\Theta}} + \Phi(r)\,e^{\bar{C_3}r^{1-\frac{n}{\Theta}}} + C_4\,\int_0^r \left(t^{-\frac{n}{\Theta}}+\frac{\omega(t)}{t}\right)e^{\bar{C_3}t^{1-\frac{n}{\Theta}}}\,dt\\
  &-\Phi(0^+) - c'\,\left(r^{1-\frac{n}{\Theta}}+\int_0^r\frac{\omega(t)}{t}\,dt\right)\\
  \geq &\Phi(r)\,e^{\bar{C_3}r^{1-\frac{n}{\Theta}}} + C_4\,\int_0^r \left(t^{-\frac{n}{\Theta}}+\frac{\omega(t)}{t}\right)e^{\bar{C_3}t^{1-\frac{n}{\Theta}}}\,dt -\Phi(0^+) - c\,\left(r^{1-\frac{n}{\Theta}}+\int_0^r\frac{\omega(t)}{t}\,dt\right),
 \end{split}
\end{equation}
where in the last inequality, we used the boundedness of $\Phi(r)$.
\end{proof}

\begin{oss}
In \cite[Theorem 3.7]{FGS}, under the hypotheses $\A\in W^{1,\infty}(\Om,\R^n\times~\R^n)$ and $f\in C^{0,\a}(\Om)$, 
Focardi, Gelli and Spadaro proved that the following estimate holds true for 
$\mathcal{L}^1$-a.e. $r$ in $(0,\frac{1}{2}\mathrm{dist}(\underline{0},\partial\Om)\wedge~1)$:
\begin{equation*}
 \frac{d}{dr}\bigg(\Phi(r)\,e^{\bar{C_3}r} + C_4\!\!\!\int_0^r \!t^{\a-1}e^{\bar{C_3}t}\,dt\bigg)
 \geq \frac{2e^{\bar{C_3}r}}{r^{n+2}}\int_{\partial B_r}\!\!\!\mu \Big(\langle \mu^{-1} \A\nu, \nabla u \rangle - 2\frac{u}{r} \Big)^2\,d\mathcal{H}^{n-1}.
 \end{equation*}
\end{oss}

\begin{oss}\label{oss cost bdd in cpt}
 We note that from Proposition \ref{prop u_r limitata W2p} the uniform boundedness of the sequence $(u_{x_0,r})_r$
 in $C^{0,\c}(\R^n)$ follows. Moreover, for base points $x_0$ in a compact set of $\Om$, the $C^{0,\c}$ norms, and thus 
 the constants in the monotonicity formulae, are uniformly bounded.  
 Indeed, as pointed out in the corresponding statements they depend on $\|\A\|_{W^{s,p}(\Om)}$ and 
 $\mathrm{dist}(x_0,\partial\Om)$.
\end{oss}

\section{The blow-up method: Classification of blow-ups}\label{s:classification}
In this section we proceed with the analysis of blow-ups showing the consequences of Theorem \ref{Weiss}.

The first consequence is that the blow-ups are $2$-homogeneous, i.e. $v(tx)=t^2v(x)$ for all $t>0$ and for all $x\in \R^n$, 
as it is possible to deduce from the second member of \ref{dis monotonia Weiss} where, 
according to Euler's homogeneous function Theorem\footnote{Let $v:\R^n\to \R$ a differentiable function, 
then $v$ is $k$-homogeneous with $k>0$ if and only if $k\,v(x)=\langle \nabla v(x), x \rangle$.}, 
the integral represents a distance to a $2$-homogeneous function set.
For a proof of the following result we refer to \cite[Proposition 4.2]{FGS}.
\begin{prop}[$2$-homogeneity of blow-ups]\label{prop blow-up 2omo}
 Let $x_0\in \Gamma_u$ and $(u_{x_0,r})_r$ as in (\ref{u_x_0 r}). Then, for every sequence $(r_j)_j\downarrow 0$ there exists
 a subsequence $(r_{j_k})_k\subset(r_j)_j$ such that the sequence $(u_{x_0,r_{j_k}})_k$ converges in $C^{1,\c}(\R^n)$
 to a function $v(y)=w(\LL^{-1}(x_0)y)$, where $w$ is $2$-homogeneous.
\end{prop}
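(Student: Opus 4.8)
The plan is to deduce the $2$-homogeneity of every blow-up from the Weiss quasi-monotonicity formula, Theorem \ref{Weiss}. First I would reduce to the normalized situation \eqref{abuso_notazione} by means of the affine change of variables $x\mapsto x_0+\LL(x_0)x$ recorded above: writing $\widetilde u_r:=u_{\LL(x_0),r}$, the identity $u_{x_0,r}(x)=\widetilde u_r(\LL^{-1}(x_0)x)$ shows that it suffices to prove that any $C^{1,\c}_{loc}$-limit $w$ of a subsequence of $(\widetilde u_r)_r$ — equivalently, of the rescalings $(u_r)_r$ written in the normalized coordinates, where now $\A(\underline{0})=I_n$, $f(\underline{0})=1$ — is $2$-homogeneous, since then $v(y)=w(\LL^{-1}(x_0)y)$ has the desired form. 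The existence of such a subsequence is the Corollary following Proposition \ref{prop u_r limitata W2p}; moreover, passing to the limit in \eqref{PDE_ur} (using $\A(rx)\to I_n$ and $f(rx)\to 1$ locally uniformly, $\nabla u_r\to\nabla w$ in $C^0_{loc}$, and $\nabla^2 w=0$ a.e.\ on $\{w=0\}$) one gets that $w\ge 0$ solves $\Delta w=\chi_{\{w>0\}}$ a.e.\ on $\R^n$, with $w(\underline{0})=|\nabla w(\underline{0})|=0$ and the quadratic bounds \eqref{u e grad u limitate}.

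The core of the argument is to show that the classical Weiss energy of $w$,
\[
 W(w,\rho):=\rho^{-n-2}\int_{B_\rho}\big(|\nabla w|^2+2w\big)\,dx-2\,\rho^{-n-3}\int_{\partial B_\rho}w^2\,d\mathcal{H}^{n-1},
\]
is constant in $\rho$. To this end I would use the scaling identity $\Phi(\rho r)=\Phi_{u_r}(\rho)$, where $\Phi_{u_r}$ denotes the Weiss energy built with the rescaled coefficients $\A(r\,\cdot)$, $f(r\,\cdot)$; this is immediate from $(u_r)_\rho=u_{\rho r}$ and $\mu(r\rho\,\cdot)=\langle\A(r\rho\,\cdot)\nu,\nu\rangle$. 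Fix a small $\rho$. Along the blow-up subsequence $r_j\downarrow 0$, Theorem \ref{Weiss} gives $\Phi_{u_{r_j}}(\rho)=\Phi(\rho r_j)\to\Phi(0^+)$, a number independent of $\rho$. On the other hand, since $\A(r_j\,\cdot)\to I_n$ and $f(r_j\,\cdot)\to 1$ locally uniformly and $u_{r_j}\to w$ in $C^{1,\c}_{loc}$, one passes to the limit in each volume and boundary integral defining $\Phi_{u_{r_j}}(\rho)$ and obtains $\Phi_{u_{r_j}}(\rho)\to W(w,\rho)$ (here one also uses $\mu(r_j\,\cdot)\to 1$ uniformly on $\partial B_\rho$, from Lemma \ref{lem reg mu} and $\mu(\underline{0})=1$). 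Comparing the two limits, $W(w,\rho)=\Phi(0^+)$ for all small $\rho$, whence $\rho\mapsto W(w,\rho)$ is constant.

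To conclude I would apply the classical Weiss monotonicity identity, which is exactly Theorem \ref{Weiss} specialized to $\A\equiv I_n$, $f\equiv 1$ (so that $\eps(r)\equiv 0$, $h(r)\equiv 0$ and $\bar{C_3}=C_4=0$), to the function $w$: for $\mathcal{L}^1$-a.e.\ $\rho$,
\[
 \frac{d}{d\rho}W(w,\rho)\ \ge\ \frac{2}{\rho^{n+2}}\int_{\partial B_\rho}\Big(\langle\nabla w,\nu\rangle-\frac{2w}{\rho}\Big)^2\,d\mathcal{H}^{n-1}\ \ge\ 0.
\]
Since the left-hand side vanishes, $\langle x,\nabla w(x)\rangle=2\,w(x)$ for $\mathcal{H}^{n-1}$-a.e.\ $x\in\partial B_\rho$ for a.e.\ $\rho$, hence for every $x\in\R^n$ by continuity of $w$ and $\nabla w$; by Euler's homogeneous function theorem $w$ is $2$-homogeneous, and undoing the change of variables gives the statement.

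The step I expect to be the main obstacle is the continuity of the Weiss energy along the blow-up sequence, i.e.\ justifying $\int_{B_\rho}\langle\A(r_jx)\nabla u_{r_j},\nabla u_{r_j}\rangle\,dx\to\int_{B_\rho}|\nabla w|^2\,dx$ together with the boundary convergence $\int_{\partial B_\rho}\mu(r_jx)u_{r_j}^2\,d\mathcal{H}^{n-1}\to\int_{\partial B_\rho}w^2\,d\mathcal{H}^{n-1}$. The volume terms are controlled because $C^{1,\c}_{loc}$ convergence yields uniform, hence $L^2$, convergence of the gradients on $\overline{B_\rho}$ while $\A(r_j\,\cdot)\to I_n$ uniformly there; for the boundary term one combines the uniform convergence $u_{r_j}\to w$ on $\partial B_\rho$ with the continuity of $\mu$ and $\mu(\underline{0})=1$. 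A minor point is to choose $\rho$ outside the null set of bad radii in the identity for $w$; since this can be done on a dense set, the constancy of $W(w,\cdot)$ throughout the whole interval follows from its local absolute continuity.
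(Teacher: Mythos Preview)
Your proposal is correct and follows the standard route the paper defers to (\cite[Proposition~4.2]{FGS}): exploit Theorem~\ref{Weiss} to see that the Weiss energy of any blow-up is constant in the radius, then conclude $2$-homogeneity via Euler's theorem. The only minor difference is that you take a small detour by first arguing that $w$ solves $\Delta w=\chi_{\{w>0\}}$ in order to re-apply the monotonicity formula in the classical setting; the paper's hint (``the second member of \eqref{dis monotonia Weiss}'') points to the slightly more direct variant of integrating the right-hand side of \eqref{dis monotonia Weiss} between $r_j\rho_1$ and $r_j\rho_2$, changing variables, and passing to the limit with the $C^{1,\c}$ convergence and $\A(r_j\cdot)\to I_n$, $\mu(r_j\cdot)\to 1$ to obtain $\int_{\rho_1}^{\rho_2}t^{-n-2}\int_{\partial B_t}(\langle\nabla w,\nu\rangle-2w/t)^2\,d\mathcal{H}^{n-1}\,dt=0$ immediately. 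Both arguments are equivalent in spirit; yours just needs the extra (but correct) observation that the $W^{2,p^*}$ bounds pass to $w$ so that $\nabla^2 w=0$ a.e.\ on $\{w=0\}$.
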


As a second consequence, remembering Proposition \ref{prop crescita basso} we can obtain that the blow-ups are nonzero.
 \begin{cor}\label{cor w non 0}
 Let $v(y)=w(\LL^{-1}(x_0)y)$ be a limit of $C^{1,\c}$ a converging sequence of rescalings $(u_{x_0, r_j})_j$ in a free-boundary point 
  $x_0\in \Gamma_u$, then $\underline{0}\in \Gamma_w$, i.e. $w\not\equiv 0$ in any neighborhood of $\underline{0}$.
\end{cor}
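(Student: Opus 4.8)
The plan is to argue by contradiction, exploiting the nondegeneracy estimate of Proposition \ref{prop crescita basso} together with the $C^{1,\c}$ (in particular uniform $L^\infty$) convergence of the rescaled functions $(u_{x_0,r_j})_j$ to $v$. Suppose $w\equiv 0$ in some ball $B_\rho(\underline 0)$; since $v(y)=w(\LL^{-1}(x_0)y)$ and $\LL(x_0)$ is a fixed invertible linear map, this forces $v\equiv 0$ on a neighbourhood of $\underline 0$, hence (by $2$-homogeneity, Proposition \ref{prop blow-up 2omo}) $v\equiv 0$ on all of $\R^n$. So it suffices to rule out $v\equiv 0$.

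First I would recall that, since $x_0\in\Gamma_u$, Proposition \ref{prop crescita basso} gives a constant $\theta>0$ with $\sup_{\partial B_r(x_0)}u\ge \theta\,r^2$ for all small $r$. Rescaling, this reads
\begin{equation*}
 \sup_{\partial B_1} u_{x_0,r} = \frac{1}{r^2}\sup_{\partial B_r(x_0)} u \ge \theta
\end{equation*}
for every sufficiently small $r>0$; in particular this holds along the sequence $r=r_j$. Next I would pass to the limit: the convergence $u_{x_0,r_j}\to v$ is in $C^{1,\c}$ on compact sets, hence uniform on $\overline{B_1}$, so $\sup_{\partial B_1} v \ge \theta > 0$. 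This immediately contradicts $v\equiv 0$, and therefore $w\not\equiv 0$ in any neighbourhood of $\underline 0$.

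It remains to check that $\underline 0\in\Gamma_w$, i.e. that $\underline 0$ is actually a free-boundary point of $w$ and not merely a point where $w$ fails to vanish. Since each $u_{x_0,r_j}$ is nonnegative and solves the PDE \eqref{PDE_ur}, the limit $v$ is nonnegative; moreover $v(\underline 0)=\lim_j u_{x_0,r_j}(\underline 0)=0$ and $\nabla v(\underline 0)=\lim_j \nabla u_{x_0,r_j}(\underline 0)=0$ because $x_0\in\Gamma_u$. Hence $\underline 0\in\partial\{v>0\}$: it lies in $\{v=0\}$, and by the nondegeneracy argument above every ball around $\underline 0$ contains points where $v>0$. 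Transporting back through the linear change of variables $y\mapsto\LL^{-1}(x_0)y$ gives $\underline 0\in\Gamma_w$, which is the assertion. The only delicate point is the interchange of $\sup_{\partial B_1}$ with the limit, which is justified precisely by the uniform (not merely pointwise) convergence supplied by Proposition \ref{prop u_r limitata W2p} and the Ascoli--Arzelà argument; no additional regularity of the coefficients is needed here.
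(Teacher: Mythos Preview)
Your proof is correct and follows essentially the same approach as the paper: nondegeneracy (Proposition \ref{prop crescita basso}) gives a uniform lower bound on $\sup_{\partial B_1}u_{x_0,r_j}$, uniform convergence passes this to $v$, and $2$-homogeneity propagates the nontriviality to every neighbourhood of the origin. The paper phrases the limiting step by selecting points $\nu_j\in\Sf^{n-1}$ with $u_{x_0,r_j}(\nu_j)\ge\theta$ and extracting a convergent subsequence via compactness of the sphere, whereas you pass $\sup_{\partial B_1}$ directly through the uniform limit; these are equivalent.
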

\begin{proof}
 Due to Proposition \ref{prop crescita basso} for any $j\in \N$, there exists a $\nu_j\in \Sf^{n-1}$ such that $u_{x_0,r_j}(\nu_j)\geq \theta$.
 From the compactness of $\Sf^{n-1}$ we can extract a subsequence $(\nu_{j_k})_k$ such that $\nu_{j_k}\to \nu\in \Sf^{n-1}$. 
 Due to the convergence in $C^{1,\c}$ we have that $v(\nu)\geq \theta$, if we define $\xi:=\LL^{-1}(x_0)\nu$, we get $w(\xi)\geq \theta$. 
 As noticed in Proposition \ref{prop blow-up 2omo} $w$ is $2$-homogeneous, then in any neighborhood of $\underline{0}$ there exists a point on the 
 direction $\xi$ on which $w$ is strictly positive, so for any $\d>0$ we have $w(\d\xi)=\d^2 w(\xi)\geq \d^2 \theta$,
 and thus this Corollary is verified.
\end{proof}

Finally, it is possible to give a classification of blow-ups. We begin by recalling the result in the classical case established 
by Caffarelli \cite{Caf77,Caf80,Caf98}. 
\begin{defi}
 A \emph{global solution} to the obstacle problem is a positive function  $w\in C^{1,1}_{loc}(\R^n)$ solving \eqref{PDE_u} 
 in the case $\A\equiv I_n$ and $f\equiv 1$.
\end{defi}
The following result occurs:
\begin{teo}\label{teo Caffarelli}
 Every global solution $w$ is convex. Moreover, if $w\not\equiv 0$ and $2$-homogeneous, then one of the following two cases occurs:
 \begin{itemize}
  \item [(A)] $w(y)=\frac{1}{2}\big(\langle y, \nu\rangle \vee 0\big)^2$ for some $\nu\in \Sf^{n-1}$,
  where the symbol $\vee$ denotes the maximum of the surrounding quantities;
  \item [(B)] $w(y)=\langle \mathbb{B}y, y\rangle$ with  $\mathbb{B}$ a symmetric, positive semidefinite matrix satisfying  
  and $\mathrm{Tr}\mathbb{B}=\frac{1}{2}$.
 \end{itemize}
\end{teo}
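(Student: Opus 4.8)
The plan is to establish convexity first, and then to exploit convexity together with $2$-homogeneity to force the coincidence set $\{w=0\}$ into one of two shapes: a half-space, or a set of measure zero. In the first case $w$ vanishes on a half-space $\{\langle y,\nu\rangle\le 0\}$, and on the complementary half-space it solves $\Delta w=1$ with zero Cauchy data on the hyperplane; by uniqueness of the one-dimensional ODE in the $\nu$-direction one recovers case (A). In the second case $\{w=0\}$ is $\mathcal{L}^n$-negligible, hence $w$ solves $\Delta w=1$ in all of $\R^n$ in the distributional sense, and a $2$-homogeneous solution of $\Delta w=1$ must be a quadratic polynomial $\langle \mathbb{B}y,y\rangle$; the constraints $\mathbb{B}\ge 0$ (from $w\ge0$ and convexity) and $\operatorname{Tr}\mathbb{B}=\tfrac12$ (from $\Delta w=1$) then give case (B).

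For \emph{convexity}, the classical argument of Caffarelli \cite{Caf98,Caf77} runs as follows. Fix a direction $e\in\Sf^{n-1}$ and consider the second difference quotient $\partial_{ee}w$, which by the $C^{1,1}_{loc}$ regularity is a bounded function. In the open set $\{w>0\}$ one has $\Delta(\partial_{ee}w)=0$, so $\partial_{ee}w$ is harmonic there; on $\partial\{w>0\}$, using $w\ge 0$ and $w\in C^{1,1}$, one shows $\partial_{ee}w\ge 0$ along the free boundary (the function touches zero from above with vanishing gradient, forcing the pure second derivative tangential structure to be nonnegative in a suitable limiting sense). A barrier/maximum-principle argument, together with a normalization that kills the behaviour at infinity (here $2$-homogeneity makes $\partial_{ee}w$ bounded and $0$-homogeneous, hence determined by its values on $\Sf^{n-1}$), yields $\partial_{ee}w\ge 0$ everywhere, i.e. $w$ is convex. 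I expect this to be the main obstacle: the delicate point is the sign of $\partial_{ee}w$ near the free boundary, which requires either a careful penalization argument or the monotonicity/regularity machinery for the free boundary; one must be slightly careful that $\partial\{w>0\}$ need not be smooth a priori.

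Once $w$ is convex, nonnegative, and $2$-homogeneous, the coincidence set $\Lambda_w=\{w=0\}$ is a closed convex cone. A convex cone in $\R^n$ either has nonempty interior or is contained in a hyperplane (hence has measure zero). If $\operatorname{int}\Lambda_w\neq\emptyset$, convexity of $\Lambda_w$ and the fact that $w$ grows quadratically forces — via the explicit ODE analysis in the orthogonal direction and the homogeneity — that $\Lambda_w$ is exactly a half-space, giving (A) with $w=\tfrac12(\langle y,\nu\rangle\vee 0)^2$; here one uses that on $\{\langle y,\nu\rangle>0\}$ the function $w$ depends only on $\langle y,\nu\rangle$ (by convexity, homogeneity, and the vanishing on the boundary hyperplane together with the equation), reducing to the scalar problem $\phi''=1$, $\phi(0)=\phi'(0)=0$. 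If instead $\Lambda_w$ has measure zero, then $\Delta w=1$ holds in $\mathcal{D}'(\R^n)$, so $w-\tfrac{1}{2n}|y|^2$ is harmonic and, being subquadratic by the growth bound, is a polynomial of degree $\le 2$; $2$-homogeneity removes the lower-order terms, leaving $w(y)=\langle\mathbb{B}y,y\rangle$, and $w\ge0$ with convexity gives $\mathbb{B}\ge0$ while $\Delta w=1$ gives $\operatorname{Tr}\mathbb{B}=\tfrac12$, which is (B). This settles all cases.
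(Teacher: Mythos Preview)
The paper does not supply a proof of this theorem at all: it is stated as a classical result due to Caffarelli, with references \cite{Caf77,Caf80,Caf98}, and is used as a black box. So there is no ``paper's own proof'' to compare against; what follows is an assessment of your strategy on its own merits.

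Your outline is broadly the standard one, and the treatment of case (B) (coincidence set of measure zero $\Rightarrow$ $\Delta w=1$ in $\mathcal{D}'(\R^n)$ $\Rightarrow$ $w-\tfrac{1}{2n}|y|^2$ harmonic of at most quadratic growth $\Rightarrow$ polynomial, then homogeneity and positivity pin down $\mathbb{B}$) is correct. Two points deserve attention.

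First, on convexity: your sketch tacitly uses $2$-homogeneity (``$\partial_{ee}w$ bounded and $0$-homogeneous'') to close the maximum-principle argument. The theorem, however, asserts convexity for \emph{every} global solution, not only the homogeneous ones; for general global solutions one cannot simply read off the sign of $\partial_{ee}w$ from its boundary behaviour, and Caffarelli's actual proof is considerably more delicate (a second-order incremental quotient argument combined with nondegeneracy, cf.\ \cite{Caf98} or \cite{PSU}). If you only intend to prove the classification of $2$-homogeneous solutions this is harmless, but then you are not proving the first sentence of the statement.

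Second, and more substantively, there is a genuine gap in case (A). Knowing that $\Lambda_w$ is a convex cone with nonempty interior does \emph{not} formally imply it is a half-space: a priori $\Lambda_w$ could be, say, a proper wedge. Your ``ODE analysis in the orthogonal direction'' presupposes that $w$ depends only on $\langle y,\nu\rangle$, which is exactly what has to be shown. The missing step is the following: pick $e\in\mathrm{int}(\Lambda_w)$; since for every $y$ the point $y+te$ lies in $\Lambda_w$ for $t$ large (cone property) and $t\mapsto w(y+te)$ is convex, nonnegative, and eventually zero, one gets $\partial_e w\le 0$ everywhere. This directional monotonicity for all $e\in\mathrm{int}(\Lambda_w)$ makes $\partial\Lambda_w$ a Lipschitz graph over a hyperplane; but to conclude it is a \emph{hyperplane} one still needs either a Liouville-type argument for the resulting $1$-homogeneous nonnegative harmonic function $-\partial_e w$ in a cone (via boundary Harnack, comparing with the half-space solution), or Caffarelli's ``Lipschitz free boundaries are $C^{1,\alpha}$'' combined with the observation that a $C^1$ cone through the origin is flat. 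Either route is standard but nontrivial, and should be named explicitly rather than absorbed into ``ODE analysis and homogeneity''.
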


Having this result at hand, a complete classification of the blow-up limits, for the obstacle
problem \eqref{problema ad ostacolo}, follows as in the classical context. 
Up to minimal difference the result looks like \cite[Proposition 4.5]{FGS} to which we refer for the proof.
The main ingredients of the proof are the quasi-monotonicity formula by Weiss 
and a $\Gamma$-convergence argument:

\begin{prop}[Classification of blow-ups]\label{prop Gamma conv}
 Every blow-up $v_{x_0}$ at a free-boundary point $x_0\in \Gamma_u$ is of the form $v_{x_0}=w(\LL^{-1}(x_0)y)$, with $w$ a non-trivial, 
 $2$-homogeneous global solution.
\end{prop}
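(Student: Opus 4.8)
The plan is to combine the Weiss quasi-monotonicity formula (Theorem~\ref{Weiss}) with a $\Gamma$-convergence argument and Caffarelli's classification (Theorem~\ref{teo Caffarelli}), exactly as in \cite[Proposition 4.5]{FGS}. First I would reduce to the frozen-coefficient setting via the change of variables introduced before Theorem~\ref{Weiss}: it suffices to show that any blow-up $w$ of $u_{\LL(x_0)}$ at $\underline 0$ (which satisfies $\C_{x_0}(\underline 0)=I_n$, $f_{\LL(x_0)}(\underline 0)/f(x_0)=1$) is a non-trivial $2$-homogeneous global solution, since the blow-up $v_{x_0}$ differs from such a $w$ only by the linear change $y\mapsto\LL^{-1}(x_0)y$. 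Fix a sequence $r_j\downarrow 0$; by Proposition~\ref{prop u_r limitata W2p} and Ascoli--Arzelà, up to a subsequence $u_{\LL(x_0),r_j}\to w$ in $C^{1,\c}_{loc}(\R^n)$ and weakly in $W^{2,p^*}_{loc}$.

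The first main point is $2$-homogeneity of $w$. From \eqref{dis monotonia Weiss}, after integrating the differential inequality on $(\rho r_j, \sigma r_j)$ and using that the monotone quantity has a finite limit $\Phi(0^+)$ while the error terms $r^{1-n/\Theta}$ and $\int_0^r \omega(t)/t\,dt$ vanish as $r\to 0^+$, one obtains
\begin{equation*}
 \int_{\rho}^{\sigma}\frac{2}{t^{n+2}}\int_{\partial B_t}\mu\Big(\langle\mu^{-1}\A\nu,\nabla u\rangle - 2\tfrac{u}{t}\Big)^2\,d\mathcal{H}^{n-1}\,dt \xrightarrow{r_j\to 0^+} 0
\end{equation*}
after rescaling (replacing $u$ by $u_{\LL(x_0),r_j}$ and $\A,\mu$ by their rescalings, which converge uniformly to $I_n$ on compact sets). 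Passing to the limit with the $C^{1,\c}$ convergence, one gets $\langle\nabla w,\nu\rangle = 2w/|x|$ a.e. on $B_\sigma\setminus B_\rho$ for all $0<\rho<\sigma$, i.e. $\langle\nabla w(x),x\rangle = 2w(x)$; by Euler's theorem (the footnoted statement) $w$ is $2$-homogeneous. That $w\not\equiv 0$ near $\underline 0$ is precisely Corollary~\ref{cor w non 0}, which rests on the nondegeneracy Proposition~\ref{prop crescita basso}.

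The second point is that $w$ solves $\Delta w = \chi_{\{w>0\}}$ on $\R^n$ with $w\ge 0$, so that Theorem~\ref{teo Caffarelli} applies. The passage to the limit in the PDE~\eqref{PDE_ur} is where the lack of regularity of $\A$ enters: the zeroth-order term $f_{\LL(x_0)}(r_j x)\to f(x_0)$ and the coefficients $\C_{x_0}(r_j x)\to I_n$ uniformly on compacta, and the divergence term $r_j\,\diw(\C_{x_0}(r_j\cdot))\to 0$ because $\diw\A\in L^{p^*}_{loc}$ contributes a factor $r_j^{1-n/p^*}\to 0$ after rescaling (this is already implicit in the $W^{2,p^*}$ bound of Proposition~\ref{prop u_r limitata W2p}). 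The convergence of $\chi_{\{u_{\LL(x_0),r_j}>0\}}$ to $\chi_{\{w>0\}}$ in the relevant sense is the only delicate step and is handled, as in \cite[Proposition 4.5]{FGS}, by a $\Gamma$-convergence argument: $u_{\LL(x_0),r_j}$ minimizes the rescaled energy $\mathcal{E}_{\LL(x_0),r_j}$ among competitors with its own boundary data, these functionals $\Gamma$-converge to $\int(|\nabla v|^2+2v)$, and the uniform $C^{1,\c}$ bounds together with nondegeneracy prevent the coincidence set from disappearing or developing spurious mass; hence $w$ is a global solution in the sense of the Definition preceding Theorem~\ref{teo Caffarelli}. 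Combining $2$-homogeneity, nontriviality, and Theorem~\ref{teo Caffarelli} gives that $w$ is of type (A) or (B), proving the Proposition. The main obstacle is this last $\Gamma$-convergence/compactness argument identifying the limit of the characteristic functions — everything else is either a direct rescaling limit or a citation.
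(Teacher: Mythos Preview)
Your proposal is correct and follows essentially the same approach as the paper, which refers to \cite[Proposition 4.5]{FGS} for the $\Gamma$-convergence argument and treats the $2$-homogeneity (Proposition~\ref{prop blow-up 2omo}) and nontriviality (Corollary~\ref{cor w non 0}) as separate preliminary results rather than bundling them into the proof. The ingredients you identify---Weiss' quasi-monotonicity for homogeneity, nondegeneracy for nontriviality, and $\Gamma$-convergence of the rescaled energies to the Dirichlet functional to pass to the limit in the PDE---are exactly those used in \cite{FGS} and invoked here.
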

According to Theorem \ref{teo Caffarelli} we shall call a global solution of type $(A)$ or of type $(B)$.

The above proposition allows us to formulate a simple criterion to distinguish between regular and singular free-boundary points.
\begin{defi}\label{reg sing}
 A point $x_0\in \Gamma_u$ is a  \emph{regular} free-boundary point, and we write $x_0\in Reg(u)$ if there exists a blow-up 
 of $u$ at $x_0$ of type $(A)$. Otherwise, we say that $x_0$ is \emph{singular} and write $x_0\in Sing(u)$.
\end{defi}

\begin{oss}\label{oss conto energia}
 Simple calculations show that $\Psi_w(1)=\theta$ for every global solution of type $(A)$ and $\Psi_w(1)=2\theta$ for every global solution 
 of type $(B)$, where $\Psi_w$ is the energy defined in \eqref{Psi_v} and $\theta$ is a dimensional constant.
\end{oss}

\begin{oss}
We observe that for every sequence $r_j\searrow0$ for which $u_{\LL(x_0),r_j}\to w$ in $C^{1,\c}(B_1)$ with $w$ being a 
 $2$-homogeneous global solution then
 \begin{equation*}
  \lim_{r_j\to 0} \Phi_{\LL(x_0)}(r_j)=\Psi_w(1).
 \end{equation*}
 From Weiss’ quasi-monotonicity the uniqueness of the limit follows, so $\Phi_{\LL(x_0)}(0)=\Psi_w(1)$ for every $w$ that is 
 the limit of the sequence $(u_{\LL(x_0),r})_{r}$. 
 It follows that if $x_0\in \Gamma_u$ is a regular point then $\Phi_{\LL(x_0)}(0)=\theta$ or, 
 equivalently every blow-up at $x_0$ is of type $(A)$.
\end{oss}

\section{Monneau’s quasi-monotonicity formula}
In this section we prove a Monneau type quasi-monotonicity formula (see \cite{Monneau}) for singular free-boundary points. 
The plan of proof follows \cite[Theorem 3.8]{FGS}. The additional difficulty is the same as 
Theorem \ref{Weiss} so for completeness we report the whole proof.

Let $v$ be a $2$-homogeneous positive polynomial, solving
\begin{equation}\label{laplaciano v=1}
 \Delta v = 1 \qquad\qquad \mathrm{on}\,\, \R^n.
\end{equation}
Let
\begin{equation}\label{Psi_v}
 \Psi_v(r):=\frac{1}{r^{n+2}}\int_{B_r}\big(|\nabla v|^2 + 2v\big)\,dx -\frac{2}{r^{n+3}}\int_{\partial B_r} v^2\,d\mathcal{H}^{n-1}.
\end{equation}
We note that the expression of $\Psi_v(r)$ is analogous to those of $\Phi$ with coefficients frozen in $\underline{0}$ (recalling \eqref{Phir}).
An integration by parts, \eqref{Psi_v} and the $2$-homogeneity of $v$ yields
\begin{equation*}
\begin{split}
 \frac{1}{r^{n+2}}\int_{B_r}|\nabla v|^2\,dx=&\frac{1}{r^{n+2}}\int_{B_r}\big(\diw(v\nabla v) - v\,\Delta v\big)\,dx
 =\frac{1}{r^{n+3}}\int_{\partial B_r} \langle\nabla v,x\rangle\,d\mathcal{H}^{n-1} - \frac{1}{r^{n+2}}\int_{B_r}v\,dx\\
 =&\frac{1}{r^{n+3}}\int_{\partial B_r} v^2\,d\mathcal{H}^{n-1} - \frac{1}{r^{n+2}}\int_{B_r}v\,dx
 =\int_{\partial B_1} v^2\,d\mathcal{H}^{n-1} - \int_{B_1}v\,dx
\end{split}
\end{equation*}
and therefore 
\begin{equation}\label{Psi_v = int v}
 \Psi_v(r)=\Psi_v(1)=\int_{B_1}v\,dx.
\end{equation}
In the next theorem we give a monotonicity formula for solutions of the obstacle problem such that $\underline{0}$
is a point of the free-boundary and 
\begin{equation}\label{pto sing1}
 \Phi(0^+)=\Psi_v(1) \quad \textrm{for some}\, v\, 2\textrm{-homogeneous solution of \eqref{laplaciano v=1}}. 
\end{equation}
As explained in Definition \ref{reg sing}, formula \eqref{pto sing1} characterizes the singular part of the free
boundary.

\begin{proof}[Proof of Theorem \ref{Monneau}]
Set $w_r=u_r-v$. As $v$ is $2$-homogenus we have that $w_r(x)=\frac{w(rx)}{r^2}$. Assuming that from \eqref{abuso_notazione} 
 $\A(\underline{0})=I_n$, due to the Divergence Theorem and Euler's homogeneous function Theorem we find
 \begin{equation*}
  \begin{split}
  \frac{d}{dr}\int_{\partial B_1} w_r^2\,&d\mathcal{H}^{n-1} = \int_{\partial B_1} w_r\,\frac{d}{dr}\left(\frac{w(rx)}{r^2}\right)\,d\mathcal{H}^{n-1}\\ 
  =& \frac{2}{r}\int_{\partial B_1} w_r (\langle\nabla w_r, x\rangle - 2w_r)\,d\mathcal{H}^{n-1} 
  = \frac{2}{r}\int_{\partial B_1} w_r (\langle\nabla u_r, x\rangle - 2u_r)\,d\mathcal{H}^{n-1}\\ 
  =& \frac{2}{r}\int_{\partial B_1} w_r (\langle \A(rx)\nabla u_r, x\rangle - 2u_r)\,d\mathcal{H}^{n-1} 
  +\frac{2}{r}\int_{\partial B_1} w_r \langle (\A(\underline{0})-\A(rx))\nabla u_r, x\rangle \,d\mathcal{H}^{n-1}\\
  \geq& \frac{2}{r}\int_{\partial B_1} w_r (\langle \A(rx)\nabla u_r, x\rangle - 2u_r)\,d\mathcal{H}^{n-1} 
  - C\, \|\nabla u_r\|_{L^2(\partial B_1)}\, \|w_r\|_{L^2(\partial B_1)}\, [\A]_{0,\c}\,r^{-\frac{n}{p^*}},
  \end{split}
 \end{equation*}
 thus by \eqref{u e grad u limitate} 
 \begin{equation}\label{monneau1}
  \begin{split}
  \frac{d}{dr}\int_{\partial B_1} w_r^2\,d\mathcal{H}^{n-1} \geq \frac{2}{r}\,\int_{\partial B_1} w_r (\langle \A(rx)\nabla u_r, x\rangle - 2u_r)\,d\mathcal{H}^{n-1} - C\,r^{-\frac{n}{p^*}}.
  \end{split}
 \end{equation}
 Using an integration by parts,  and \eqref{laplaciano v=1} we can rewrite the first term on the right as
 \begin{equation}
  \begin{split}
  \int_{\partial B_1}& w_r (\langle \A(rx)\nabla u_r, x\rangle - 2u_r)\,d\mathcal{H}^{n-1} \\
  \stackrel{\eqref{PDE_u}}{=}& \int_{B_1}\big(\langle \A(rx)\nabla u_r, \nabla w_r\rangle + w_r\,f(rx)\chi_{\{u_r>0\}}(x)\big)\,dx -\int_{\partial B_1} 2\,w_r\,u_r\,d\mathcal{H}^{n-1} \\
  =& \int_{B_1}\big(\langle \A(rx)\nabla u_r, \nabla u_r\rangle + u_r\,f(rx)\chi_{\{u_r>0\}}(x)\big)\,dx -\int_{\partial B_1} 2\,u_r^2\,d\mathcal{H}^{n-1}\\
  &- \int_{B_1}\big(\langle \A(rx)\nabla u_r, \nabla v\rangle + v\,f(rx)\chi_{\{u_r>0\}}(x)\big)\,dx +\int_{\partial B_1} 2\,v\,u_r\,d\mathcal{H}^{n-1} \\
  =& \Phi(r) - \int_{B_1} f(rx)\big(u_r + v\,\chi_{\{u_r>0\}}(x)\big)\,dx + 2\int_{\partial B_1} \big(\mu(rx)-\mu(\underline{0})\big)u_r^2\,d\mathcal{H}^{n-1}\\
  &- \int_{B_1}\langle \A(rx)\nabla u_r, \nabla v\rangle\,dx +2\int_{\partial B_1} v\,u_r\,d\mathcal{H}^{n-1}\\  
  \geq& \Phi(r) - \int_{B_1}(u_r + v\,\chi_{\{u_r>0\}}(x))\,dx -\int_{B_1}\langle \nabla u_r, \nabla v\rangle\,dx
  - \int_{B_1} \big(f(rx)-f(\underline{0})\big)(u_r + v\,\chi_{\{u_r>0\}}(x))\,dx\\ 
  &- \int_{B_1}\langle \big(\A(rx)-\A(\underline{0})\big)\nabla u_r, \nabla v\rangle\,dx 
  + 2\int_{\partial B_1} \big(\mu(rx)-\mu(\underline{0})\big)u_r^2\,d\mathcal{H}^{n-1} 
   +2\int_{\partial B_1} v\,u_r\,d\mathcal{H}^{n-1}.
\end{split}
 \end{equation}    
Recalling the $\c$-H\"{o}lder continuity of $\A$ and $\mu$, from the Divergence Theorem, we obtain
\begin{equation}\label{monneau2}
  \begin{split}
  \int_{\partial B_1}& w_r (\langle \A(rx)\nabla u_r, x\rangle - 2u_r)\,d\mathcal{H}^{n-1} \\
  \geq& \Phi(r) - \int_{B_1}(u_r + v)\,dx -\int_{B_1}\langle \nabla u_r, \nabla v\rangle\,dx +2\int_{\partial B_1} v\,u_r\,d\mathcal{H}^{n-1} -c\,\left(r^\c + \omega(r)\right)\\
  \stackrel{\eqref{Psi_v = int v}}{=}& \Phi(r) -\Psi_v(1) - \int_{B_1}(u_r\,\Delta v)\,dx -\int_{B_1}\langle \nabla u_r, \nabla v\rangle\,dx +2\int_{\partial B_1} v\,u_r\,d\mathcal{H}^{n-1} -c'\,\left(r^\c + \omega(r)\right)\\
  =& \Phi(r) -\Psi_v(1) - \int_{B_1}\diw(u_r\,\nabla v)\,dx +2\int_{\partial B_1} v\,u_r\,d\mathcal{H}^{n-1} -c'\,\left(r^\c + \omega(r)\right)\\
  =& \Phi(r) -\Psi_v(1) + \int_{\partial B_1}\!\!\!u_r\big(2v - \langle\nabla v, x\rangle\big)\,d\mathcal{H}^{n-1} -c'\left(r^\c + \omega(r)\right)
  = \Phi(r) -\Psi_v(1) -c'\left(r^\c + \omega(r)\right).\\
  \end{split}
 \end{equation} 
 So, combining together \eqref{monneau1} and \eqref{monneau2}, and assuming that $\c:= 1-\frac{n}{p^*}$ we deduce
 \begin{equation*}
  \begin{split}
  \frac{d}{dr}\int_{\partial B_1} w_r^2\,d\mathcal{H}^{n-1} \geq \frac{2}{r} \big(\Phi(r) -\Psi_v(1)\big) -c'\,\left(r^{-\frac{n}{p^*}}+\frac{\omega(r)}{r}\right).
  \end{split}
 \end{equation*} 
 from inequality \eqref{weiss stima Phir} we deduce 
 \begin{equation*}
  \begin{split}
  \frac{d}{dr}\int_{\partial B_1}\!\!\!\!\!\! w_r^2\,d\mathcal{H}^{n-1} \geq& \frac{2}{r} \Big(\Phi(r)e^{\bar{C_3}r^{1-\frac{n}{\Theta}}} + C_4\,\int_0^r\!\!\! \left(t^{-\frac{n}{\Theta}}+\frac{\omega(t)}{t}\right)e^{\bar{C_3}t^{1-\frac{n}{\Theta}}}\,dt\\
      &\phantom{AAAAAAAAA}- c\,\left(r^{1-\frac{n}{\Theta}} + \int_0^r\frac{\omega(t)}{t}\,dt\right)-\Psi_v(1)\Big) -c'\,\left(r^{-\frac{n}{p^*}}+\frac{\omega(r)}{r}\right)\\
      \geq& \frac{2}{r} \Big(\Phi(r)e^{\bar{C_3}r^{1-\frac{n}{\Theta}}} + C_4\,\int_0^r\!\!\! \left(t^{-\frac{n}{\Theta}}+\frac{\omega(t)}{t}\right)e^{\bar{C_3}t^{1-\frac{n}{\Theta}}}\,dt -\Psi_v(1)\Big)\\
      &\phantom{AAAAAAAAA}-c''\,\left(r^{-\frac{n}{\Theta}}+\frac{\omega(r)}{r}+\frac{1}{r}\int_0^r\frac{\omega(t)}{t}\,dt\right)\\
  \end{split}
 \end{equation*} 
and then set $C_5=\frac{c''}{1-\frac{n}{\Theta}}$
 
 \begin{equation*}
  \begin{split}
 &\frac{d}{dr}\bigg(\int_{\partial B_1}\!\!\!\!\!\! w_r^2\,d\mathcal{H}^{n-1} + C_5\,\left(r^{1-\frac{n}{\Theta}}+\int_0^r\frac{\omega(t)}{t}\,dt + \int_0^r\frac{dt}t\int_0^t\frac{\omega(s)}{s}\,ds\right)\bigg)\\
 &\phantom{AAAAAAAAA}\geq \frac{2}{r} \Big(\Phi(r)e^{\bar{C_3}r^{1-\frac{n}{\Theta}}} + C_4\,\int_0^r\!\!\! \left(t^{-\frac{n}{\Theta}}+\frac{\omega(t)}{t}\right)e^{\bar{C_3}t^{1-\frac{n}{\Theta}}}\,dt -\Psi_v(1)\Big).
  \end{split}
 \end{equation*}  
 \end{proof}

\begin{oss}
In \cite[Theorem 3.8]{FGS}, under hypotheses $\A\in W^{1,\infty}(\Om,\R^n\times~\R^n)$ 
and $f\in C^{0,\a}(\Om)$, 
Focardi, Gelli and Spadaro proved that the following estimate holds true for
$\mathcal{L}^1$-a.e. $r$ in $(0,\frac{1}{2}\mathrm{dist}(\underline{0},\partial\Om)\wedge~1)$:
\begin{equation*}
 \frac{d}{dr}\bigg(\int_{\partial B_1}(u_r-v)^2\,d\mathcal{H}^{n-1} + C_5\,r^\a\bigg)
 \geq \frac{2}{r}\bigg(e^{C_3\,r}\Phi(r) + C_4 \int_0^r e^{C_3t}t^{\a-1}\,dt-\Psi_v(1)\bigg).
 \end{equation*}
\end{oss}
 
\section{The blow-up method: Uniqueness of blow-ups}
The last remarks show that the blow-up limits at the free-boundary points must be of a unique type: nevertheless, this does
not imply the uniqueness of the limit itself. In this paragraph we prove the property of uniqueness of blow-ups.

In view of Proposition \ref{prop Gamma conv}, if $x\in \Gamma_u$ the blow-up in $x$ is unique with form
\begin{equation*}
		v_x(y) = \left\{  \begin{array}{ll}
	\vspace{0.2cm}		  \frac{1}{2}\big(\langle \LL^{-1}(x)\varsigma(x), y\rangle \vee 0\big)^2 &		 	\quad x\in Reg(u)\\ 
				  \langle\LL^{-1}(x)\mathbb{B}_x\LL^{-1}(x) y, y\rangle &         		\quad x\in Sing(u).
			\end{array} \right.
\end{equation*}
where $\varsigma(x)\in\Sf^{n-1}$ is the blow-up direction at $x\in Reg(u)$ and $\mathbb{B}_x$ is symmetric matrix such that $\mathrm{Tr}\,\mathbb{B}_x=\frac{1}{2}$.\\

We start with the case of singular points. Therefore, from Weiss’ and Monneau’s quasi-monotonicity formulae it follows that:
\begin{prop}[{\cite[Proposition 4.11]{FGS}}]\label{unicita blowup + mod cont}
 For every point $x\in Sing(u)$ there exists a unique blow-up limit $v_x(y)=w(\LL^{-1}(x)y)$. Moreover, if $K\subset Sing(u)$ is a compact subset, 
 then, for every point $x\in K$ 
 \begin{equation}\label{modulo di continuita K pti sing}
  \left\|u_{\LL(x),r} - w \right\|_{C^1(B_1)}\leq \s_K(r) \qquad \forall r\in (0, r_K),
 \end{equation}
for some modulus of continuity $\s_K :\R^+\to\R^+$ and a radius $r_K>0$.
\end{prop}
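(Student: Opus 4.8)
The plan is to derive both assertions from Theorems~\ref{Weiss} and~\ref{Monneau} together with the classification of blow-ups in Proposition~\ref{prop Gamma conv}, in three steps: pointwise uniqueness at each singular point, continuity of $x\mapsto w_x$ on $Sing(u)$, and the uniform decay on $K$. Throughout, for $x\in\Gamma_u$ I work in the coordinates \eqref{notazioni trasformate} centred at $x$, write $u_\rho:=u_{\LL(x),\rho}$, and abbreviate
\[
\Xi(\rho):=\rho^{1-\frac n\Theta}+\int_0^\rho\frac{\omega(t)}{t}\,dt+\int_0^\rho\frac{dt}{t}\int_0^t\frac{\omega(s)}{s}\,ds ,
\]
which tends to $0$ as $\rho\to0^+$ since $\Theta>n$ (Remark~\ref{choice (H1)}) and, by $(H4)$ with $a\ge1$, $\int_0^\rho\frac{\omega(s)}{s}\log(\rho/s)\,ds\to0$. \emph{Step 1 (pointwise uniqueness).} Fix $x\in Sing(u)$. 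By Proposition~\ref{prop u_r limitata W2p} and Proposition~\ref{prop Gamma conv}, any $\rho_j\downarrow0$ admits a subsequence along which $u_{\rho_j}\to w$ in $C^{1,\c}(B_1)$ with $w$ a non-trivial $2$-homogeneous global solution; as $x\in Sing(u)$, $w$ is of type $(B)$, hence a $2$-homogeneous positive polynomial with $\Delta w=1$ and $\Psi_w(1)=2\theta=\Phi(0^+)$ (Remark~\ref{oss conto energia}). Thus \eqref{pto sing1} holds with $v=w$, and Theorem~\ref{Monneau} gives that $\rho\mapsto\int_{\partial B_1}(u_\rho-w)^2\,d\mathcal H^{n-1}+C_5\,\Xi(\rho)$ is nondecreasing. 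Along $\rho_j$ it tends to $0$, so by monotonicity it tends to $0$ on the whole family, i.e. $u_\rho\to w$ in $L^2(\partial B_1)$. Using the uniform $W^{2,p^*}$ bound, the scaling identity $u_{t\rho}(y)=t^{-2}u_\rho(ty)$ and the $2$-homogeneity of $w$, any $C^{1,\c'}$-subsequential limit of $(u_\rho)$ coincides with $w$ on all of $B_1$; hence $u_\rho\to w=:w_x$ in $C^{1,\c'}(B_1)$ and the blow-up $v_x(y)=w_x(\LL^{-1}(x)y)$ is unique.

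\emph{Step 2 (continuity of $x\mapsto w_x$).} Let $x_k\to\bar x$ in $Sing(u)$. The type-$(B)$ profiles form a compact family in $C^1(B_1)$ (the parameter matrix $\mathbb B$ ranging in a compact set), so along a subsequence $w_{x_k}\to\tilde w$, still of type $(B)$. Apply Theorem~\ref{Monneau} at $x_k$ with the \emph{fixed} polynomial $v=w_{\bar x}$ (admissible since $x_k\in Sing(u)$ and $w_{\bar x}$ is a type-$(B)$ polynomial): the map $\rho\mapsto\|u_{\LL(x_k),\rho}-w_{\bar x}\|_{L^2(\partial B_1)}^2+C_5\Xi(\rho)$ is nondecreasing and, by Step~1 at $x_k$, has limit $\|w_{x_k}-w_{\bar x}\|_{L^2(\partial B_1)}^2$ as $\rho\to0^+$; hence it dominates this value for all small $\rho$. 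Letting $k\to\infty$ at fixed $\rho$—note $u_{\LL(x_k),\rho}\to u_{\LL(\bar x),\rho}$ uniformly on $\partial B_1$ by continuity of $u$, $\A$, $f$ and $f\ge c_0>0$, so $\LL(x_k)\to\LL(\bar x)$—and then $\rho\to0^+$, Step~1 at $\bar x$ forces the left-hand side to $0$, so $\|\tilde w-w_{\bar x}\|_{L^2(\partial B_1)}=0$, i.e. $\tilde w=w_{\bar x}$; thus $w_{x_k}\to w_{\bar x}$.

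\emph{Step 3 (uniform decay on $K$).} If the estimate failed there would be $\eps_0>0$, $x_k\in K$ and $r_k\downarrow0$ with $\|u_{\LL(x_k),r_k}-w_{x_k}\|_{L^2(\partial B_1)}^2\ge\eps_0$, and, along a subsequence, $x_k\to\bar x\in K$. By Theorem~\ref{Monneau} at $x_k$ with $v=w_{x_k}$, the map $\rho\mapsto\|u_{\LL(x_k),\rho}-w_{x_k}\|_{L^2(\partial B_1)}^2+C_5\Xi(\rho)$ is nondecreasing, so $\|u_{\LL(x_k),\rho}-w_{x_k}\|_{L^2(\partial B_1)}^2\ge\eps_0-C_5\Xi(\rho)$ for every $\rho\ge r_k$. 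Fixing a small $\rho_0$ and letting $k\to\infty$, Step~2 and $u_{\LL(x_k),\rho_0}\to u_{\LL(\bar x),\rho_0}$ give $\|u_{\LL(\bar x),\rho_0}-w_{\bar x}\|_{L^2(\partial B_1)}^2\ge\eps_0-C_5\Xi(\rho_0)$; since this holds for every small $\rho_0$ while the left side tends to $0$ by Step~1 at $\bar x$ and $\Xi(\rho_0)\to0$, we get $0\ge\eps_0$, a contradiction. Hence $\sup_{x\in K}\|u_{\LL(x),r}-w_x\|_{L^2(\partial B_1)}\to0$; applying Monneau's monotonicity at each $x\in K$ at all scales $\le r$ and integrating in $\rho$ (via $u_{\LL(x),t\rho}(y)=t^{-2}u_{\LL(x),\rho}(ty)$ and $2$-homogeneity of $w_x$) promotes this to a uniform bound for $\|u_{\LL(x),r}-w_x\|_{L^2(B_1)}$, and the uniform $W^{2,p^*}(B_1)$ bounds (Proposition~\ref{prop u_r limitata W2p}, Remark~\ref{oss cost bdd in cpt}) then upgrade it, by a standard compactness–interpolation argument, to the asserted modulus $\s_K$ on some $(0,r_K)$.

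\emph{Main obstacle.} The crux is Step~2: one must exclude that the blow-up profile jumps in the limit $x_k\to\bar x$, and here—unlike in the $C^{1,1}$ theory—there is no pointwise Hessian of $u$ to compare, since $u\in W^{2,p^*}$ only. The remedy is to feed the \emph{fixed} comparison polynomial $w_{\bar x}$ into Monneau's formula at the \emph{moving} centres $x_k$, so that the pointwise uniqueness already established at $\bar x$ pins down the limit profile; the fact that all constants in Theorems~\ref{Weiss} and~\ref{Monneau} are uniform for base points in the compact $K$ (Remark~\ref{oss cost bdd in cpt}) is what then makes Step~3 genuinely quantitative.
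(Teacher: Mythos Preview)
The paper does not supply its own proof of this proposition: it simply states the result and cites \cite[Proposition~4.11]{FGS}, remarking that it follows from the Weiss and Monneau quasi-monotonicity formulae. Your three-step argument---Monneau monotonicity for pointwise uniqueness, stability of the blow-up profile under variation of the base point via Monneau's formula with a \emph{fixed} comparison polynomial, and a contradiction argument for the uniform modulus on $K$---is correct and is precisely the standard route taken in \cite{FGS}; the only thing to add is that Step~2 is also what guarantees that limits of singular points within $K$ remain singular, which you use tacitly when passing to $\bar x\in K$ in Step~3.
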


Next, we proceed with the case of the regular points.\\
We extend the energy defined in \eqref{Psi_v} from $2$-homogeneous functions to each function $\xi\in W^{1,2}(B_1)$ by
 \begin{equation*}
  \Psi_\xi(1)=\int_{B_1}\big(|\nabla \xi|^2 + 2\xi\big)\,dx -\int_{\partial B_1} \xi^2\,d\mathcal{H}^{n-1}.
 \end{equation*}
We state Weiss' celebrated epiperimetric inequality \cite[Theorem 1]{Weiss} 
(recently a variational proof for the thin obstacle problem has been
given in \cite{FS}, and with the same approach, for the lower dimensional obstacle problem has been given in \cite{Geraci}):

\begin{teo}[Weiss’ epiperimetric inequality]\label{epip Weiss}
There exist $\d>0$ and $k\in (0,1)$ such that, for every $\phi\in H^1(B_1)$, $2$-homogeneous function, with $\|\phi-w\|_{H^1(B_1)}\leq \d$ for 
some global solution $w$ of type $(A)$, there exists a function $\xi\in H^1(B_1)$ such that 
$\xi_{|\partial B_1}=\phi_{|\partial B_1}$, $\xi\geq 0$ and 
\begin{equation}\label{dis epip}
 \Psi_\xi(1)-\theta\leq (1-k)\left(\Psi_\phi(1)-\theta\right),
\end{equation}
where $\theta=\Psi_w(1)$ is the energy of any global solution of type $(A)$.
\end{teo}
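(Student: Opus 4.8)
The statement is Weiss' epiperimetric inequality \cite[Theorem~1]{Weiss}, and I would recover it by a compactness--contradiction argument, in the spirit of the variational proofs in \cite{FS,Geraci}. First a normalisation: since $\Psi$ is rotation invariant, every global solution of type $(A)$ is a rotation of $w_0(y):=\tfrac12\big((y_n)\vee0\big)^2$, and $\Psi_w(1)=\theta$ is the same dimensional constant for all such $w$ (Remark~\ref{oss conto energia}), so it suffices to argue with $w=w_0$. Suppose the claim fails: there are $2$-homogeneous $\phi_j\in H^1(B_1)$ with $t_j:=\|\phi_j-w_0\|_{H^1(B_1)}\to0$ and
\begin{equation*}
 \Psi_{\xi}(1)-\theta>\Big(1-\tfrac1j\Big)\big(\Psi_{\phi_j}(1)-\theta\big)\qquad\text{for every }\xi\in H^1(B_1)\text{ with }\xi\ge0,\ \xi|_{\partial B_1}=\phi_j|_{\partial B_1}
\end{equation*}
(we may assume $\Psi_{\phi_j}(1)-\theta>0$, the opposite case following directly from the minimality of $w_0$ among admissible competitors with its own boundary datum). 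Set $\psi_j:=t_j^{-1}(\phi_j-w_0)$, a $2$-homogeneous function with $\|\psi_j\|_{H^1(B_1)}=1$; passing to a subsequence, $\psi_j\rightharpoonup\psi$ in $H^1(B_1)$ with $\psi$ $2$-homogeneous, and since $\phi_j\ge0$ while $w_0$ vanishes identically on $\{y_n\le0\}$ we get $\psi\ge0$ a.e.\ on $B_1\cap\{y_n\le0\}$.

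Next I would Taylor-expand the (quadratic-plus-linear) functional $\Psi$ at $w_0$: integrating by parts with $\Delta w_0=\chi_{\{y_n>0\}}$ and the $2$-homogeneity of $w_0$ on $\partial B_1$, one gets for every $2$-homogeneous $\eta$
\begin{equation*}
 \Psi_{w_0+t\eta}(1)-\theta=2t\,\ell(\eta)+t^2\,\mathcal{Q}(\eta),\qquad \ell(\eta):=\int_{B_1\cap\{y_n\le0\}}\!\!\!\eta\,dx+\int_{\partial B_1}\!\!w_0\,\eta\,d\mathcal{H}^{n-1},
\end{equation*}
where $\mathcal{Q}(\eta):=\int_{B_1}|\nabla\eta|^2\,dx-\int_{\partial B_1}\eta^2\,d\mathcal{H}^{n-1}$, which on $2$-homogeneous functions equals the Rayleigh form $\tfrac1{n+2}\int_{\Sf^{n-1}}\big(|\nabla_{\Sf^{n-1}}\eta|^2-(n-2)\eta^2\big)\,d\mathcal{H}^{n-1}$ of the spherical Laplacian. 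Building a competitor $\xi_j$ for $\phi_j$ then amounts to replacing the degree-two homogeneous extension of the trace $\phi_j|_{\partial B_1}$ by an extension of strictly smaller energy compatible with $\xi_j\ge0$: one absorbs the ``tangential'' part of $\psi_j$ — the span of the directions $\partial_{\nu_i}w_\nu|_{\nu=e_n}$, on which $\ell$ vanishes because $\Psi_{w_\nu}(1)\equiv\theta$ — into a nearby solution $w_\nu$, while on the remaining modes the non-$2$-homogeneous harmonic extension strictly lowers $\mathcal{Q}$ by a uniform factor (the homogeneity-improvement gap, quantified by the eigenvalue gap of $-\Delta_{\Sf^{n-1}}$ around its degree-two eigenvalue together with the classification of Theorem~\ref{teo Caffarelli}). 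Dividing the failed inequality by $t_j$ and by $t_j^2$ and letting $j\to\infty$, the resulting gain on the weak limit $\psi$ strictly beats the right-hand side, a contradiction; the constant $k$ is then read off from that gap.

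The point I expect to be the main obstacle is precisely this last construction: one has to organise the decomposition of $\psi_j|_{\partial B_1}$ into its constraint-active tangential part — on which only the order-$t$ term survives and must carry a favourable sign through the linear term $2\int\xi$ of $\Psi$ — and its constraint-inactive part — on which the order-$t^2$ homogeneity improvement is available — and to show that this decomposition and the attendant energy estimates are stable under weak $H^1$-convergence. An alternative route that bypasses the compactness step is Weiss' original explicit construction of the improving competitor from the spherical harmonic expansion of $\phi|_{\partial B_1}$, for which I would refer to \cite{Weiss}.
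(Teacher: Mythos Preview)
The paper does not prove this theorem at all: it is quoted verbatim from \cite[Theorem~1]{Weiss} and only used as a black box later in Lemma~\ref{lem tecn unic blow-up}. So there is nothing to compare your argument against except the literature the paper cites.

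Your sketch follows the compactness--contradiction route of \cite{FS,Geraci} rather than Weiss' original constructive proof via the spherical-harmonics decomposition of $\phi|_{\partial B_1}$. The outline you give (normalise to $w_0$, Taylor-expand $\Psi$ to second order, absorb the tangential component of the perturbation into the family $\{w_\nu\}_{\nu\in\Sf^{n-1}}$, and gain on the transversal component via the eigenvalue gap of $-\Delta_{\Sf^{n-1}}$) is the right skeleton for that approach, and you correctly identify the delicate point: organising the decomposition so that the linear term $\ell$ carries a favourable sign on the constraint-active part while the quadratic improvement survives on the rest, all stably under weak $H^1$ convergence. That step is genuinely nontrivial and is where the bulk of the work in \cite{FS,Geraci} lies; what you have written is an honest sketch, not a proof. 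One small caution: your expression for $\ell(\eta)$ uses the extension of $\Psi$ written just before the theorem with coefficient $1$ on the boundary term, whereas the definition \eqref{Psi_v} carries a coefficient $2$; with the latter (which is the one for which $\Psi_{w_0}(1)=\theta$ as in Remark~\ref{oss conto energia}) the boundary contribution cancels and one gets simply $\ell(\eta)=\int_{B_1\cap\{y_n\le0\}}\eta\,dx$, which is exactly the term that is nonnegative on admissible perturbations since $\eta\ge0$ there. For the purposes of this paper, citing \cite{Weiss} (or \cite{FS}) suffices.
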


As in \cite{FGS} we prove a technical lemma that will be the key ingredient in the proof of uniqueness. With respect to \cite[Lemma 4.8]{FGS}
the lack of regularity of $\A$ and $f$ in $(H1)$-$(H3)$ does not allow us to use the final dyadic argument; 
for this reason we introduce a technical hypothesis $(H4)$ with $a>2$. For a clearer comprehension on behalf of the reader, 
we report the whole proof: 
\begin{lem}\label{lem tecn unic blow-up}
 Let $u$ be the solution of \eqref{problema ad ostacolo}  
 and we assume $(H4)$ with $a>2$ and \eqref{abuso_notazione}.
 If there exist radii $0\leq \varrho_0<r_0<1$ such that 
 \begin{equation}\label{cond inf lemma tecn}
  \inf_{w}\|{u_r}_{|\partial B_1}-w\|_{H^1(\partial B_1)}\leq \d \qquad\qquad \forall\,\,\varrho_0\leq r\leq r_0,
 \end{equation}
where the infimum is taken on all global solutions $w$ of type $(A)$ and $\d>0$ is the constant of Theorem \ref{epip Weiss},
then for each pair of radii $\varrho,t$ such that $\varrho_0\leq \varrho<t\leq r_0$ we have 
\begin{equation}\label{cond int lemma tecn}
 \int_{\partial B_1} |u_t - u_\varrho|\, d\mathcal{H}^{n-1}\leq C_7\, \rho(t),
\end{equation}
with $C_7$ a positive constant independent of  $r$ and $\varrho$, while $\rho(t)$ is a growing function vanishing in $0$.
\end{lem}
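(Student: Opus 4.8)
The plan is to estimate $\int_{\partial B_1}|u_t-u_\varrho|\,d\mathcal H^{n-1}$ by integrating the radial derivative of $s\mapsto u_s|_{\partial B_1}$, to control that derivative through Weiss' quasi-monotonicity inequality \eqref{dis monotonia Weiss}, and to close the estimate using the decay of the Weiss deficit furnished by the epiperimetric inequality (Theorem \ref{epip Weiss}). Since $u\in C^{1,\c}_{loc}$, the map $s\mapsto u_s=u(s\,\cdot)/s^2$ is $C^1$ into $C^1(\overline{B_1})$ with $\partial_s u_s(x)=\tfrac1s\big(\langle\nabla u_s(x),x\rangle-2u_s(x)\big)$, so $u_t-u_\varrho=\int_\varrho^t\partial_s u_s\,ds$ pointwise on $\partial B_1$, and by Cauchy--Schwarz on $\partial B_1$ and a rescaling of \eqref{dis monotonia Weiss} — replacing $\langle\nabla u_s,x\rangle$ by $\langle\mu^{-1}\A\nu,\nabla u\rangle$ at the cost of $O(s^{\c})$, using the $\c$--H\"older continuity of $\A,\mu$ from Lemma \ref{lem reg mu} and the uniform bound $\|\nabla u_s\|_\infty\le C$ of Proposition \ref{prop u_r limitata W2p} — one gets $\int_{\partial B_1}(\langle\nabla u_s,x\rangle-2u_s)^2\le C\,s\,M'(s)+C\,s^{2\c}$, where $M(s)$ is the nondecreasing function of Theorem \ref{Weiss}, whence
\begin{equation*}
\int_{\partial B_1}|u_t-u_\varrho|\,d\mathcal H^{n-1}\ \le\ C\int_\varrho^t\Big(\frac{M'(s)}{s}\Big)^{1/2}ds+C\,t^{\c}.
\end{equation*}
It remains to control the first integral through the smallness of $W(s):=M(s)-\theta$, with $\theta=\Psi_w(1)$ the energy of a type-$(A)$ global solution.

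\emph{Decay of the deficit.} By \eqref{cond inf lemma tecn} the $2$-homogeneous extension $\phi_s$ of $u_s|_{\partial B_1}$ is $C\d$-close in $H^1(B_1)$ to a type-$(A)$ global solution $w$, so Theorem \ref{epip Weiss} gives $\xi_s\in H^1(B_1)$, $\xi_s\ge0$, $\xi_s|_{\partial B_1}=u_s|_{\partial B_1}$, with $\Psi_{\xi_s}(1)-\theta\le(1-k)\big(\Psi_{\phi_s}(1)-\theta\big)$. Comparing the rescaled energy of $u_s$ with that of the radially extended competitor $\xi_s$ via the minimality of $u$, and freezing the coefficients at $\underline{0}$ — which by $(H2)$, Lemma \ref{lem reg mu} and the modulus of continuity $\omega$ of $f$ in $(H4)$ costs $O(s^{\c}+\omega(s))$ — one obtains $\Phi_{\underline{0}}(s)-\theta\le(1-k)\big(\Psi_{\phi_s}(1)-\theta\big)+C\big(s^{\c}+\omega(s)\big)$, where $\Phi_{\underline{0}}(s)=\int_{B_1}(|\nabla u_s|^2+2u_s)\,dx-2\int_{\partial B_1}u_s^2\,d\mathcal H^{n-1}$ is the Weiss energy with coefficients frozen at $\underline{0}$. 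Coupling this with a Weiss-type identity of the form $s\,\Phi_{\underline{0}}'(s)=2\int_{\partial B_1}(\langle\nabla u_s,x\rangle-2u_s)^2\,d\mathcal H^{n-1}+(n+2)\big(\Psi_{\phi_s}(1)-\Phi_{\underline{0}}(s)\big)+O(\omega(s))$ and discarding the nonnegative boundary term yields
\begin{equation*}
\Phi_{\underline{0}}'(s)\ \ge\ \frac{\beta_0}{s}\big(\Phi_{\underline{0}}(s)-\theta\big)-\frac{C}{s}\big(s^{\c}+\omega(s)\big),\qquad \beta_0:=\frac{(n+2)k}{1-k}>0 .
\end{equation*}
Since $|M(s)-\Phi_{\underline{0}}(s)|\le C\big(s^{1-\frac n\Theta}+\omega(s)+\int_0^s\tfrac{\omega(\tau)}\tau\,d\tau\big)$ by \eqref{Phir}, \eqref{B} and the freezing estimate, the same inequality holds for $W=M-\theta$ up to an enlarged, still integrable error; integrating it against the factor $s^{-\beta_0}$ — downward from $r_0$, where $W(r_0)\le C$, or from $0$ when $\varrho_0=0$ — and keeping track of the Dini errors gives $W(s)\le C\,\rho(s)^2$ for $\varrho_0\le s\le r_0$, with $\rho$ a nondecreasing function vanishing at $0$ built from $s^{\min(\beta_0,\c,1-n/\Theta)}$ and the iterated Dini integrals $\int_0^s\tfrac{\omega(\tau)}\tau\,d\tau$ and $\int_0^s\tfrac{d\tau}\tau\int_0^\tau\tfrac{\omega(\sigma)}\sigma\,d\sigma$ (this is the function of \eqref{rho}).

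\emph{Conclusion and main obstacle.} In the first integral above write $s^{-1/2}\sqrt{M'(s)}=s^{-\eta}\big(s^{2\eta-1}W'(s)\big)^{1/2}$ with $0<2\eta<\min(1,\beta_0,\c)$ and apply Cauchy--Schwarz in $s$:
\begin{equation*}
\int_\varrho^t\Big(\frac{W'(s)}{s}\Big)^{1/2}ds\ \le\ \Big(\int_\varrho^t s^{-2\eta}\,ds\Big)^{1/2}\Big(\int_\varrho^t s^{2\eta-1}W'(s)\,ds\Big)^{1/2}.
\end{equation*}
The first factor is $\le C\,t^{\frac12-\eta}$; integrating the second by parts and inserting $0\le W(s)\le C\rho(s)^2$, the boundary and bulk contributions are estimated by $\rho(t)^2$ plus convergent Dini integrals, so $\int_\varrho^t(W'(s)/s)^{1/2}\,ds\le C\,\rho(t)$, which together with the $C\,t^{\c}$ term gives \eqref{cond int lemma tecn}. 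The delicate point — and the reason $(H4)$ is imposed with $a>2$ rather than the H\"older continuity of $f$ used in \cite{FGS} — is exactly the bookkeeping of the $\omega$-errors across the last two steps: the integrating-factor step turns $\omega(s)/s$ into $\int_0^s\tfrac1\tau\int_0^\tau\tfrac{\omega(\sigma)}\sigma\,d\sigma$, and the subsequent Cauchy--Schwarz inserts a further logarithmic weight, so that the relevant error integrals converge precisely when $\int_0^1\tfrac{\omega(r)}r|\log r|^a\,dr<\infty$ with $a>2$; the dyadic summation of \cite[Lemma 4.8]{FGS} is unavailable here because $\diw\A$ and $f$ are not pointwise defined.
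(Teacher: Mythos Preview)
Your overall architecture matches the paper's: you estimate the radial derivative of $u_r$ on $\partial B_1$, control it by the Weiss-type derivative, use the epiperimetric inequality together with a freezing argument to obtain a differential inequality of the form $\Phi'(r)\ge \frac{\beta_0}{r}(\Phi(r)-\theta)-\frac{C}{r}(r^{\c}+\omega(r))$, and then integrate. Up to this point the two arguments are essentially the same.

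The genuine gap is in your \emph{Conclusion} step. After Cauchy--Schwarz with the power weight $s^{-\eta}$ you must control
\[
\int_\varrho^t s^{2\eta-1}W'(s)\,ds
= t^{2\eta-1}W(t)-\varrho^{2\eta-1}W(\varrho)+(1-2\eta)\int_\varrho^t s^{2\eta-2}W(s)\,ds,
\]
and the bulk term is the problem. The best pointwise bound the deficit satisfies under $(H4)$ is $W(s)\le C|\log s|^{-a}$ (this is exactly what the paper extracts from \eqref{Phi - theta} and the inequality $t^{\widetilde C_6}|\log t|^a\to 0$). With that bound,
\[
\int_\varrho^t s^{2\eta-2}W(s)\,ds\ \le\ C\int_\varrho^t s^{2\eta-2}|\log s|^{-a}\,ds,
\]
and since $2\eta-2<-1$ this integral diverges like $\varrho^{2\eta-1}$ as $\varrho\to 0$, regardless of how large $a$ is. The hypothesis $(H4)$ with $a>2$ does \emph{not} rescue this: the weight $s^{2\eta-1}$ is a negative power, which dominates any logarithmic gain. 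So your claim that ``the boundary and bulk contributions are estimated by $\rho(t)^2$ plus convergent Dini integrals'' is not correct, and the identification of your $\rho$ with the function in \eqref{rho} is unjustified.

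Relatedly, your closing diagnosis is the opposite of what actually happens. The paper \emph{does} use the dyadic summation of \cite[Lemma~4.8]{FGS}: it applies Cauchy--Schwarz with the logarithmic weight, obtaining
\[
\int_{\partial B_1}|u_t-u_\varrho|\,d\mathcal H^{n-1}\le C\Big(\log\tfrac{t}{\varrho}\Big)^{1/2}\big(\Phi(t)-\Phi(\varrho)+\textrm{errors}\big)^{1/2}\le C\Big(\log\tfrac{t}{\varrho}\Big)^{1/2}|\log t|^{-a/2},
\]
and then telescopes over dyadic scales $[2^{-j},2^{-j+1})$ to get $\sum_{j\ge h} j^{-a/2}$; this series converges precisely when $a>2$, and \emph{that} sum is the function $\rho$ of \eqref{rho}. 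The dyadic step has nothing to do with pointwise values of $\diw\A$ or $f$; it is purely a device to absorb the unbounded factor $(\log(t/\varrho))^{1/2}$. Replace your power-weighted Cauchy--Schwarz by this log-weighted one and the dyadic summation, and your argument becomes the paper's.
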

\begin{proof}
 From the Divergence Theorem, \eqref{E H-O_grande} and \eqref{H'-H} we can compute the derivative of $\Phi'(r)$ in the following way:
 \begin{equation*}
  \begin{split}
   \Phi'(r)&=\frac{\mathcal{E}'(r)}{r^{n+2}}-(n+2)\frac{\mathcal{E}(r)}{r^{n+3}} - 2\frac{\mathscr{H}'(r)}{r^{n+3}} +2(n+3)\frac{\mathscr{H}(r)}{r^{n+4}}\\
   \geq &\frac{1}{r^{n+2}}\int_{\partial B_r}(\langle\A\nabla u,\nabla u\rangle +2\,fu)\,d\mathcal{H}^{n-1} -(n+2)\frac{\mathcal{E}(r)}{r^{n+3}} + \frac{8}{r^{n+4}}\mathscr{H}(r)\\
   &- \frac{4}{r^{n+3}}\int_{\partial B_r} u \langle \A\nu,\nabla u \rangle\, d\mathcal{H}^{n-1} -C\,r^{-\frac{n}{\Theta}}\\
   \geq& \frac{1}{r^{n+2}}\int_{\partial B_r}(|\nabla u|^2 +2\,u)\,d\mathcal{H}^{n-1} -\frac{(n+2)}{r}\Phi(r) - \frac{2(n-2)}{r^{n+4}}\int_{\partial B_r} u^2\,d\mathcal{H}^{n-1}\\
   &- \frac{4}{r^{n+3}}\int_{\partial B_r} u \langle \nu,\nabla u \rangle\, d\mathcal{H}^{n-1} -C\,\left(r^{-\frac{n}{\Theta}}+\frac{\omega(r)}{r}\right)\\
   =& -\frac{(n+2)}{r}\Phi(r) +  \frac{1}{r}\int_{\partial B_1} \Big(\big(\langle\nu,\nabla u_r\rangle-2u_r\big)^2 + |\partial_\tau u_r|^2 +2u_r - 2n\,u_r^2\Big)\,d\mathcal{H}^{n-1}\\
   &- C\left(r^{-\frac{n}{\Theta}}+\frac{\omega(r)}{r}\right),
  \end{split}
 \end{equation*}
where we denote by $\partial_\tau u_r$, the tangential derivative of $u_r$ along $\partial B_1$.
Let $w_r$ be the $2$-homogeneous extension of ${u_r}_{|\partial B_1}$. 
We note that if $\varphi$ is a $2$-homogeneous function, then we have
\begin{equation}\label{cambio variabili w_r}
 \begin{split}
\int_{B_1} \varphi(x)\,dx&=\int_0^1\int_{\partial B_t} \varphi(y)\,d\mathcal{H}^{n-1}(y)\,dt=\int_0^1 t^{n+1}\int_{\partial B_1} \varphi(y)\,d\mathcal{H}^{n-1}(y)\\
&=\frac{1}{n+2}\int_{\partial B_1} \varphi(y)\,d\mathcal{H}^{n-1}(y).
 \end{split}
\end{equation}
Then a simple integration in polar coordinates, thanks to Euler's homogeneous function Theorem and \eqref{cambio variabili w_r} 
which give 
\begin{equation*}
\begin{split}
 \int_{\partial B_1}\big(|\partial_\tau & u_r|^2  +2u_r - 2n\,u_r^2\big)\,d\mathcal{H}^{n-1}=\int_{\partial B_1}\!\!\!\!\big(|\partial_\tau w_r|^2 +2w_r +4w_r^2- 2(n+2)\,w_r^2\big)\,d\mathcal{H}^{n-1}\\
 =&\int_{\partial B_1}\big(|\nabla w_r|^2 +2w_r\big) - 2(n+2)\int_{\partial B_1}w_r^2\,d\mathcal{H}^{n-1}\\
 =&(n+2)\int_{B_1} (|\nabla w_r|^2 + 2w_r)\,d\mathcal{H}^{n-1}- 2(n+2)\int_{\partial B_1}w_r^2\,d\mathcal{H}^{n-1}= (n+2)\Psi_{w_r}(1).
\end{split}
\end{equation*}
Therefore, we conclude that 
\begin{equation}\label{Phi'> in lem}
 \Phi'(r)\geq \frac{(n+2)}{r}\big(\Psi_{w_r}(1)-\Phi(r)\big) + \frac{1}{r}\int_{\partial B_1} \Big(\big(\langle\nu,\nabla u_r\rangle-2u_r\big)^2\,d\mathcal{H}^{n-1} - C\,\left(r^{-\frac{n}{\Theta}}+\frac{\omega(r)}{r}\right). 
\end{equation}
We can also note that, being $w_r$ the $2$-homogeneous extension of ${u_r}_{|\partial B1}$, thanks to 
\eqref{cambio variabili w_r} and \eqref{cond inf lemma tecn}, there exists a global solution $w$ of type $(A)$ such that
\begin{equation*}
 \|w_r-w\|_{H^1(B_1)}\leq \frac{1}{\sqrt{n+2}}\|{w_r}_{\partial B_1} - w\|_{H^1(\partial B_1)} \leq \d.
\end{equation*}
Hence, we can apply the epiperimetric inequality \eqref{dis epip} to $w_r$ and find a function $\xi\in w_r + H^1_0(B_1)$
such that 
\begin{equation}\label{epip in lem}
 \Psi_\xi(1)-\theta\leq (1-k)\big(\Psi_{w_r}(1)-\theta \big).
\end{equation}
Moreover, we can assume without loss of generality (otherwise we substitute $\xi$ with $u_r$) that $\Psi_\xi(1)\leq \Psi_{u_r}(1)$. 
Then, from the minimality of $u_r$ in $\mathcal{E}$ with respect to its boundary conditions \eqref{abuso_notazione}
and lemma \ref{lem reg mu} we have that
\begin{equation}\label{Psi_xi > in lem}
 \begin{split}
  \Psi_\xi(1)=& \int_{B_1}\big(|\nabla \xi|^2 + 2\xi\big)\,dx -\int_{\partial B_1} \xi^2\,d\mathcal{H}^{n-1}\\
  \geq&  \int_{B_1}\big(\langle \A(rx)\nabla \xi,\nabla \xi\rangle + 2\,f(rx)\xi\big)\,dx -\int_{\partial B_1} \mu(rx)\,\xi^2\,d\mathcal{H}^{n-1}\\
  &-C\,\left(r^{1-\frac{n}{p^*}}+\omega(r)\right) \int_{B_1}\big(|\nabla \xi|^2 + 2\xi\big)\,dx - C\,r^\c \int_{\partial B_1} \xi^2\,d\mathcal{H}^{n-1}\\
  \geq& \Phi(r) - C\,\left(r^{1-\frac{n}{p^*}}+\omega(r)\right) \int_{B_1}\big(|\nabla \xi|^2 + 2\xi\big)\,dx - C\,r^\c \int_{\partial B_1} \xi^2\,d\mathcal{H}^{n-1}\\
  \geq& \Phi(r) - C\,\left(r^{1-\frac{n}{p^*}}+\omega(r)\right).
 \end{split}
\end{equation}
From \eqref{epip in lem} and \eqref{Psi_xi > in lem} we get
\begin{equation}\label{Psi-Phi in lem}
 \begin{split}
 \Psi_{w_r}(1)-\Phi(r)&\geq \frac{1}{1-k}\left(\Phi(r)-\theta-C\left(r^{1-\frac{n}{p^*}}+\omega(r)\right)\right) + \theta - \Phi(r) \\
 &= \frac{k}{1-k} (\Phi(r)-\theta)- C\left(r^{1-\frac{n}{p^*}}+\omega(r)\right).
 \end{split}
\end{equation}
Then from \eqref{Phi'> in lem} and \eqref{Psi_xi > in lem}
\begin{equation}\label{lemmatecn Phi'>}
 \Phi'(r)\geq \frac{n+2}{r}\frac{k}{1-k} (\Phi(r)-\theta) - C\,\left(r^{-\frac{n}{\Theta}}+\frac{\omega(r)}{r}\right). 
\end{equation}
Let now $\widetilde{C}_6\in (0,\,(1-\frac{n}{\Theta}) \wedge (n+2)\frac{k}{1-k})$, then
\begin{equation}\label{Phi-theta)'}
 \bigg((\Phi(r)-\theta)\,r^{-\widetilde{C}_6}\bigg)'\geq - C\,\left(r^{-\frac{n}{\Theta}-\widetilde{C}_6}+\frac{\omega(r)}{r^{1+\widetilde{C}_6}}\right).
\end{equation}
Indeed, by taking into account \eqref{lemmatecn Phi'>}
\begin{equation*}
\begin{split}
 \Big((&\Phi(r)-\theta)\,r^{-\widetilde{C}_6}\Big)' =  \Phi'(r)r^{-\widetilde{C}_6} - \widetilde{C}_6\,(\Phi(r)-\theta)\,r^{-\widetilde{C}_6-1}\\
 &\geq \bigg(\frac{n+2}{r}\frac{k}{1-k} (\Phi(r)-\theta) - C\,\left(r^{-\frac{n}{\Theta}}+\frac{\omega(r)}{r}\right)\bigg)r^{-\widetilde{C}_6} - \widetilde{C}_6\,(\Phi(r)-\theta)\,r^{-\widetilde{C}_6-1}\\
 &\geq(\Phi(r)-\theta)r^{-\widetilde{C}_6-1} \bigg((n+2)\frac{k}{1-k}-\widetilde{C}_6\bigg) - C\,\left(r^{-\frac{n}{\Theta}}+\frac{\omega(r)}{r}\right)r^{-\widetilde{C}_6}\\
 &\geq - C\,\left(r^{-\frac{n}{\Theta}-\widetilde{C}_6}+\frac{\omega(r)}{r^{1+\widetilde{C}_6}}\right).
\end{split}
\end{equation*}

By integrating \eqref{Phi-theta)'} in $(t,r_0)$ with $t\in (s_0,r_0)$ and multiplying by $t^{\widetilde{C}_6}$ we finally get 
 \begin{align*}
  &t^{\widetilde{C}_6}\,\bigg[(\Phi(r)-\theta)\,r^{-\widetilde{C}_6}\bigg]_t^{r_0}\geq - C\,t^{\widetilde{C}_6}\,\int_t^{r_0}\left(r^{-\frac{n}{\Theta}-\widetilde{C}_6}+\frac{\omega(r)}{r^{1+\widetilde{C}_6}}\right)\,dr
 \end{align*}
whence  
 \begin{equation}\label{Phi - theta}
 \begin{split}
  \Phi(t)-\theta &\leq C\bigg(\int_t^{r_0}\left(r^{-\frac{n}{\Theta}-\widetilde{C}_6}+\frac{\omega(r)}{r^{1+\widetilde{C}_6}}\right)\,dr + 1\bigg)t^{\widetilde{C}_6}\\
  &\leq C \bigg(r^{1-\frac{n}{\Theta}}+ t^{\widetilde{C}_6} + t^{\widetilde{C}_6} \int_t^{r_0}\frac{\omega(r)}{r^{1+\widetilde{C}_6}}\,dr  \bigg) \leq C\, t^{\widetilde{C}_6} \left(\int_t^{r_0}\frac{\omega(r)}{r^{1+\widetilde{C}_6}}\,dr + 1\right).
 \end{split}
 \end{equation}
Consider now $\varrho_0<\varrho<r_0$  and estimate as follows
\begin{equation*}
 \begin{split}
  \int_{\partial B_1}|u_t-&u_\varrho|\,d\mathcal{H}^{n-1}=\int_{\partial B_1}\left|\int_\varrho^t\frac{d}{dr}\left(\frac{u(rx)}{r^2}\right)\,dr\right|\,d\mathcal{H}^{n-1}\\
  \leq& \int_\varrho^t\!\! r^{-2}\int_{\partial B_1}\!\!\! \left|\langle\nabla u(rx),x \rangle - 2\frac{u(rx)}{r}\right|\,d\mathcal{H}^{n-1}\,dr
  = \int_\varrho^t\!\! r^{-1}\int_{\partial B_1} \!\!\left|\langle\nabla u_r(x),x \rangle - 2\,u_r(x)\right|\,d\mathcal{H}^{n-1}\,dr\\
  \leq& \sqrt{n\omega_n} \int_\varrho^t r^{-\frac{1}{2}}\left(r^{-1}\int_{\partial B_1}\left|\langle\nabla u_r(x),x \rangle - 2\,u_r(x)\right|^2\,d\mathcal{H}^{n-1}\right)^\frac{1}{2}\,dr.
 \end{split}
\end{equation*}
Combining \eqref{weiss stima Phir}, \eqref{Phi'> in lem}, \eqref{Psi-Phi in lem}, \eqref{Phi - theta} and the H\"{o}lder inequality 
we have
\begin{equation}\label{u_t-u_s in lem}
\begin{split}
 \int_{\partial B_1} |u_t &-u_\varrho|\,d\mathcal{H}^{n-1}\leq C\int_\varrho^t r^{-\frac{1}{2}}\bigg(\Phi'(r)+ C\left(r^{-\frac{n}{\Theta}}+\frac{\omega(r)}{r}\right)\bigg)^\frac{1}{2}\,dr\\
 &\leq C \left(\log\frac{t}{\varrho}\right)^\frac{1}{2}\,\left(\Phi(t)-\Phi(\varrho) + C\left(t^{1-\frac{n}{\Theta}}-\varrho^{1-\frac{n}{\Theta}}+ \int_\varrho^t\frac{\omega(r)}{r}\,dr\right)\right)^\frac{1}{2}\\
 &\leq C \left(\log\frac{t}{\varrho}\right)^\frac{1}{2}\,\left((\Phi(t)-\theta) +(\theta-\Phi(\varrho)) + C\left(t^{1-\frac{n}{\Theta}}+\int_\varrho^t\frac{\omega(r)}{r}\,dr\right)\right)^\frac{1}{2}\\
 &\leq C \left(\log\frac{t}{\varrho}\right)^\frac{1}{2}\,\left(t^{\widetilde{C}_6} + t^{\widetilde{C}_6} \int_t^{r_0}\frac{\omega(r)}{r^{1+\widetilde{C}_6}}\,dr + \int_0^t\frac{\omega(r)}{r}\,dr + \int_\varrho^t\frac{\omega(r)}{r}\,dr \right)^\frac{1}{2}.
\end{split}
\end{equation}
The function $|\log t|^{a}$ is decreasing if $t\in (0,1]$ and it is easy to prove that $t^{\widetilde{C}_6}|\log t|^{a}\searrow 0$. 
If $r_0<<1$ then we have
\begin{equation*}
t^{\widetilde{C}_6} \int_t^{r_0}\frac{\omega(r)}{r^{1+\widetilde{C}_6}}\,dr 
+ \int_0^t\frac{\omega(r)}{r}\,dr\leq |\log t|^{-a}\int_0^{r_0}\frac{\omega(r)|\log t|^{a}}{r}\,dr.
\end{equation*}
Therefore thanks to the hypothesis $(H4)$ with $a>2$, 
if $r_0<<1$ for every $0\leq t\leq r_0$, then we achieve $(\omega(t)\vee t^{C_6})\leq |\log t|^{-a}$. 
Then the following holds   
\begin{equation}\label{u_t-u_s < log in lem}
\begin{split}              
\int_{\partial B_1} |u_t -u_\varrho|\,d\mathcal{H}^{n-1} &\leq C \left(\log\frac{t}{\varrho}\right)^\frac{1}{2}\,
	|\log t|^{-\frac{a}{2}}\left(1 +\int_0^{r_0}\frac{\omega(r)\,|\log r|^a}{r}\,dr\right)^\frac{1}{2}
					   \leq C \left(\log\frac{t}{\varrho}\right)^\frac{1}{2}\,|\log t|^{-\frac{a}{2}}. 
\end{split}
\end{equation}
A simple dyadic decomposition argument then leads to the conclusion. If $\varrho\in [2^{-k},2^{-k+1})$ 
and $t\in [2^{-h},2^{-h+1})$ with $h<k$, applying \eqref{u_t-u_s < log in lem}  
\begin{equation*}
 \int_{\partial B_1} |u_t-u_\varrho|\,d\mathcal{H}^{n-1}\leq C\,\sum_{j=h}^k \log(2^{j})^{-\frac{a}{2}}\leq C_7\, \sum_{j=h}^\infty \frac{1}{j^{\frac{a}{2}}}=: C_7\, \rho(t),
\end{equation*}
with 
\begin{equation}\label{rho}
 \rho(t):=\sum_{j=h}^\infty \frac{1}{j^{\frac{a}{2}}}\quad \textrm{if} \quad t\in [2^{-h},2^{-h+1}).
\end{equation}
By taking \eqref{H3'} into account  we have $a>2$, therefore, the function $\rho(t)$ is growing and infinitesimal in $0$, 
from which the conclusion of the lemma follows.
 \end{proof}

Checking the hypothesis of Lemma \ref{lem tecn unic blow-up} 
it is possible to prove the uniqueness of the blow-ups at regular points of the free-boundary:   
\begin{prop}[{\cite[Proposition 4.10]{FGS}}]\label{prop unicita blow-up pti regolari}
Let $u$ be a solution to the obstacle problem \eqref{PDE_u} with $f$ that satisfies $(H4)$ with $a>2$ and $x_0\in Reg(u)$. Then, there exist constants $r_0=r_0(x_0), \eta_0=\eta_0(x_0)$
such that every $x\in \Gamma_u\cap B_{\eta_0}(x_0)$ is a regular point and, denoting by $v_{x}=w(\LL^{-1}(x)y)$ any blow-up of  
$u$ in $x$ we have
\begin{equation}\label{stima unicità blow-up pti reg}
 \int_{\partial B_1}|u_{\LL(x),r}-w|\,d\mathcal{H}^{n-1}(y)\leq C_7\,\rho(r) \qquad \forall\,r\in (0,r_0), 
\end{equation}
where $C_7$ is an independent constant from  $r$ and $\rho(r)$ a growing, infinitesimal function in $0$. 
In particular, the blow-up limit $v_x$ is unique.
\end{prop}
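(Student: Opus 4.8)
The plan is to reduce the whole statement to the technical Lemma \ref{lem tecn unic blow-up}: I would verify its hypothesis \eqref{cond inf lemma tecn} with $\varrho_0=0$, first at $x_0$ and then uniformly for base points in a small ball about $x_0$, and then read off the decay estimate \eqref{stima unicità blow-up pti reg} and the uniqueness of the blow-up from its conclusion \eqref{cond int lemma tecn}.

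\emph{Step 1 (the point $x_0$).} Since $x_0\in Reg(u)$, Proposition \ref{prop Gamma conv} and the remarks following it give that every blow-up of $u$ at $x_0$ is of type $(A)$ and $\Phi_{\LL(x_0)}(0^+)=\theta$, the energy of a type $(A)$ solution (Remark \ref{oss conto energia}). First I would show that
\begin{equation*}
 d_{x_0}(r):=\inf_{w}\big\|{u_{\LL(x_0),r}}_{|\partial B_1}-w\big\|_{H^1(\partial B_1)}\longrightarrow 0\qquad(r\to 0^+),
\end{equation*}
with the infimum over global solutions $w$ of type $(A)$: otherwise some $r_k\downarrow 0$ would keep $d_{x_0}(r_k)$ bounded below, but by the uniform $W^{2,p^*}$ bound of Proposition \ref{prop u_r limitata W2p} a subsequence of the rescalings converges in $C^{1,\c}_{loc}$, hence on $\partial B_1$ in $H^1$, to a $2$-homogeneous global solution of energy $\Phi_{\LL(x_0)}(0^+)=\theta$, i.e.\ of type $(A)$ by Remark \ref{oss conto energia} --- a contradiction. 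Hence there is $r_0\in(0,1)$ with $d_{x_0}(r)\le\d$ on $(0,r_0)$, $\d$ being the constant of the epiperimetric inequality (Theorem \ref{epip Weiss}), so Lemma \ref{lem tecn unic blow-up} applies with $\varrho_0=0$ and gives $\int_{\partial B_1}|u_{\LL(x_0),t}-u_{\LL(x_0),\varrho}|\,d\mathcal{H}^{n-1}\le C_7\,\rho(t)$ for $0<\varrho<t\le r_0$. Since $\rho(t)\to 0$, the traces $({u_{\LL(x_0),r}}_{|\partial B_1})_r$ are Cauchy in $L^1(\partial B_1)$; with the $C^{1,\c}_{loc}$ precompactness of the rescalings this forces $u_{\LL(x_0),r}$ to converge as $r\to 0^+$, so the blow-up $v_{x_0}$ is unique, and letting $\varrho\to 0^+$ yields \eqref{stima unicità blow-up pti reg} at $x_0$.

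\emph{Step 2 (openness and uniformity).} Next I would fix $r_0$ small and use that $x\mapsto\Phi_{\LL(x)}(r_0)$ is continuous on $\Gamma_u$ (by the regularity of $u,\A,f$ and the uniform bounds of Proposition \ref{prop u_r limitata W2p}; cf.\ Remark \ref{oss cost bdd in cpt}), together with the upper bound $\Phi_{\LL(x)}(0^+)\le\Phi_{\LL(x)}(r_0)e^{\bar{C_3}r_0^{1-n/\Theta}}+C_4\int_0^{r_0}(t^{-n/\Theta}+\omega(t)/t)e^{\bar{C_3}t^{1-n/\Theta}}\,dt$ coming from Theorem \ref{Weiss}. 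Choosing $r_0$ so small that the integral is $<\theta/2$ and recalling $\Phi_{\LL(x_0)}(r_0)\to\theta$, continuity gives $\eta_0>0$ with $\Phi_{\LL(x)}(0^+)<2\theta$, hence $=\theta$ (Proposition \ref{prop Gamma conv}, Remark \ref{oss conto energia}), for all $x\in\Gamma_u\cap B_{\eta_0}(x_0)$; thus these points are regular. It then remains to run Step 1 uniformly: if \eqref{cond inf lemma tecn} with $\varrho_0=0$ failed for every choice of small $r_0,\eta_0$, I would obtain $x_j\to x_0$ and $r_j\downarrow 0$ with $d_{x_j}(r_j)>\d$; the monotone quantity of Theorem \ref{Weiss} at $x_j$ is trapped between its limit $\theta$ at $0$ and its value at any fixed positive scale, which tends to $\theta$, so $\Phi_{\LL(x_j)}(r_j)\to\theta$, and by the uniform $C^{1,\c}_{loc}$ bounds (Proposition \ref{prop u_r limitata W2p}, Remark \ref{oss cost bdd in cpt}), together with $\C_{x_j}(r_j\,\cdot\,)\to I_n$ and $f_{\LL(x_j)}(r_j\,\cdot\,)/f(x_j)\to 1$ locally uniformly, a subsequence of $u_{\LL(x_j),r_j}$ converges (arguing as in Proposition \ref{prop blow-up 2omo} for the $2$-homogeneity) to a $2$-homogeneous global solution of energy $\theta$, i.e.\ of type $(A)$, contradicting $d_{x_j}(r_j)>\d$. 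With \eqref{cond inf lemma tecn} now uniform, Lemma \ref{lem tecn unic blow-up} --- whose constants are uniform on compacta (Remark \ref{oss cost bdd in cpt}) --- gives \eqref{cond int lemma tecn} for every such $x$, and letting $\varrho\to 0^+$ gives \eqref{stima unicità blow-up pti reg} and the uniqueness of $v_x$.

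\emph{Main obstacle.} The delicate point is Step 2: Lemma \ref{lem tecn unic blow-up} needs the $\d$-closeness \eqref{cond inf lemma tecn} along the \emph{whole} interval of scales down to $0$, so continuity in $x$ at one scale does not suffice on its own; one must combine it with a compactness--classification argument run diagonally in $(x_j,r_j)$, which relies on the uniformity of all the constants (Proposition \ref{prop u_r limitata W2p}, Remark \ref{oss cost bdd in cpt}) and on the fact that rescalings centered at varying points still converge to a $2$-homogeneous global solution of the normalized equation $\Delta v=\chi_{\{v>0\}}$.
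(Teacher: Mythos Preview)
Your proposal is correct and follows exactly the approach the paper indicates: the paper does not spell out a proof here but simply states that the result follows by ``checking the hypothesis of Lemma~\ref{lem tecn unic blow-up}'' and cites \cite[Proposition~4.10]{FGS}, and your Steps~1--2 reproduce precisely that verification (compactness/classification to obtain \eqref{cond inf lemma tecn} with $\varrho_0=0$, first at $x_0$ and then uniformly via the diagonal argument, followed by letting $\varrho\to0^+$ in \eqref{cond int lemma tecn}). The delicate point you flag --- the $2$-homogeneity and type-$(A)$ classification of the limit along a \emph{varying} sequence $(x_j,r_j)$ via the uniform Weiss trapping $\theta\le\Phi_{\LL(x_j)}(r_j)\le\Phi_{\LL(x_j)}(r^*)\to\Phi_{\LL(x_0)}(r^*)\to\theta$ --- is indeed the key step, and your handling of it (Remark~\ref{oss cost bdd in cpt}, Proposition~\ref{prop blow-up 2omo}, Proposition~\ref{prop Gamma conv}) matches the argument in \cite{FGS}.
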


\begin{oss}
If $f$ is $\a$-H\"{o}lder we can prove Lemma \ref{lem tecn unic blow-up} and Proposition \ref{prop unicita blow-up pti regolari} 
with $\rho(t)=t^{C_6}$ where $C_6:=\frac{\bar{C}_6\wedge \a}{2}$.
\end{oss}

\section{Regularity of the free-boundary}
In this last section we state some regularity results of the free-boundary of $u$, the solution of \eqref{problema ad ostacolo}.
If the matrix $\A$ satisfies the hypotheses $(H1)$-$(H2)$ and the linear term $f$ satisfies the hypothesis $(H4)$ with $a>2$
we obtain differentiability of the free-boundary in a neighborhood of any point $x\in Reg(u)$. In particular if $f$ is H\"{o}lder
we establish the $C^{1,\b}$ regularity as in \cite{FGS} where $\A$ is Lipschitz continuous.

Since the arguments involved are those used in \cite{FGS} together with the preliminary assumption developed in the previous sections
we do not provide any proof.
\begin{teo}[{\cite[Theorem 4.12]{FGS}}]\label{teo regolarità pti reg}
 Assume hypotheses $(H1)$, $(H2)$ and $(H4)$ with $a>2$ hold. Let $x\in Reg(u)$. Then, there exists $r>0$ such that $\Gamma_u\cap B_r(x)$ 
 is hypersurface $C^1$ and $n$ its normal vector is absolutely continuous with modulus of continuity depending on $\rho$ 
 defined in \eqref{rho}.
 
 In particular if $f$ is H\"{o}lder continuous there exists $r>0$ such that $\Gamma_u\cap B_r(x)$ 
 is hypersurface  $C^{1,\b}$ for some universal exponent $\b\in (0,1)$.
\end{teo}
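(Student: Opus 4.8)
The plan is to follow the scheme of \cite[Theorem 4.12]{FGS}, since all the preliminary ingredients are now available in our setting: the classification of blow-ups (Proposition \ref{prop Gamma conv} together with Theorem \ref{teo Caffarelli}), the uniqueness of blow-ups at regular points with the quantitative decay \eqref{stima unicità blow-up pti reg} (Proposition \ref{prop unicita blow-up pti regolari}), the quadratic non-degeneracy (Proposition \ref{prop crescita basso}), the quadratic growth \eqref{u e grad u limitate}, and the $C^{1,\gamma}$ equi-bounds for the rescalings (Proposition \ref{prop u_r limitata W2p}). First I would fix $x_0\in Reg(u)$ and let $\eta_0=\eta_0(x_0)$, $r_0=r_0(x_0)$ be as in Proposition \ref{prop unicita blow-up pti regolari}, so that every $x\in\Gamma_u\cap B_{\eta_0}(x_0)$ is regular, with unique blow-up $v_x(y)=\tfrac12\big(\langle\LL^{-1}(x)\varsigma(x),y\rangle\vee 0\big)^2$ for some $\varsigma(x)\in\Sf^{n-1}$, and
\[ \int_{\partial B_1}\big|u_{\LL(x),r}-w_x\big|\,d\mathcal{H}^{n-1}\leq C_7\,\rho(r)\qquad\forall\,r\in(0,r_0), \]
where $w_x(z)=\tfrac12(\langle\varsigma(x),z\rangle\vee 0)^2$ and $\rho$ is the modulus in \eqref{rho}.

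The core of the argument is to upgrade this pointwise decay into a modulus of continuity for the map $x\mapsto\varsigma(x)$ on $\Gamma_u\cap B_{\eta_0}(x_0)$. For this I would observe that, for a fixed radius $r$, the map $x\mapsto u_{\LL(x),r}$ is continuous in $L^1(\partial B_1)$: writing $u_{\LL(x),r}(y)=r^{-2}u(x+r\LL(x)y)$ and using the Lipschitz bound $\|\nabla u\|_{L^\infty(B_\rho(x_0))}\leq C\rho$ from \eqref{u e grad u limitate}, one gets $|u_{\LL(x),r}(y)-u_{\LL(x'),r}(y)|\leq C\,r^{-1}\big(|x-x'|+r\,|\LL(x)-\LL(x')|\big)$, and $\LL(\cdot)=f(\cdot)^{-1/2}\A^{1/2}(\cdot)$ is continuous because $\A\in C^{0,\gamma}(\Om)$ (Morrey, as $p^*>n$, so $\A^{1/2}$ is $\gamma$-Hölder) and $f$ is Dini-continuous. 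Then the triangle inequality $\|w_x-w_{x'}\|_{L^1(\partial B_1)}\leq \|w_x-u_{\LL(x),r}\|_{L^1}+\|u_{\LL(x),r}-u_{\LL(x'),r}\|_{L^1}+\|u_{\LL(x'),r}-w_{x'}\|_{L^1}$, combined with the displayed decay and an optimization $r=r(|x-x'|)$ that balances $\rho(r)$ against the $r^{-1}$-factor, yields $\|w_x-w_{x'}\|_{L^1(\partial B_1)}\leq\tilde\rho(|x-x'|)$ with $\tilde\rho$ a modulus of the same nature as $\rho$ (a power $|x-x'|^{\beta}$ when $f$ is Hölder, i.e. $\rho(t)=t^{C_6}$). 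Since the map $w\mapsto\varsigma$ from the compact family of type-$(A)$ profiles to $\Sf^{n-1}$ is bi-Lipschitz, this transfers to the desired modulus of continuity for $\varsigma$.

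Finally I would deduce the $C^1$ graph property. The decay estimate together with the non-degeneracy of Proposition \ref{prop crescita basso} and $f\geq c_0$ gives the usual flatness: for $x\in\Gamma_u\cap B_{\eta_0}(x_0)$ and $r$ small, $\{u=0\}\cap B_r(x)$ is contained in a slab $\{y:\ |\langle\LL^{-1}(x)\varsigma(x),\,y-x\rangle|\leq\sigma(r)\,r\}$ with $\sigma(r)\to0$, while non-degeneracy forbids $\{u=0\}$ from being confined to a thinner slab of the complementary orientation. Feeding the continuity of $\varsigma$ from the previous step into the standard Whitney/graph construction of \cite{FGS}, one obtains that $\Gamma_u$ is, near $x_0$, the graph of a $C^1$ function whose gradient — equivalently the unit normal $\varsigma$ — has modulus of continuity controlled by $\rho$ from \eqref{rho}; in particular this is $C^{1,\beta}$ for some universal $\beta\in(0,1)$ when $f$ is Hölder. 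I expect the only genuinely technical point to be the interpolation in the second step: tracking how the $x$-dependence of the anisotropic rescaling $\LL(x)$ interacts with the decay rate and checking that the optimized modulus $\tilde\rho$ stays infinitesimal — this is exactly what the hypothesis $(H4)$ with $a>2$ secures, since it is what makes $\rho$ in \eqref{rho} infinitesimal; the geometric last step is then essentially the classical/\cite{FGS} argument and requires no new idea.
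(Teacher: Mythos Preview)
Your proposal is correct and follows exactly the approach the paper indicates: the paper does not give its own proof of this theorem but explicitly defers to \cite[Theorem~4.12]{FGS}, stating that ``since the arguments involved are those used in \cite{FGS} together with the preliminary assumptions developed in the previous sections we do not provide any proof.'' Your outline---uniform decay of $u_{\LL(x),r}$ to the type-$(A)$ profile from Proposition~\ref{prop unicita blow-up pti regolari}, triangle-inequality plus scale optimization to transfer this into a modulus of continuity for $x\mapsto\varsigma(x)$, and then the flatness/non-degeneracy graph argument---is precisely the scheme of \cite[Theorem~4.12]{FGS}, with the only adaptation being that $\rho$ replaces the power rate, which is exactly what the paper's Lemma~\ref{lem tecn unic blow-up} and the remark following Proposition~\ref{prop unicita blow-up pti regolari} set up.
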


We are able to say less on the set of singular points. We know that under the hypotheses $(H1)$, $(H2)$ and $(H4)$ wiht $a\geq 1$, 
the set $Sing(u)$ is contained in the union of $C^1$ submanifold. 

\begin{defi}\label{d:singular stratum}
 The singular stratum $S_k$ of dimension $k$ for $k=0,1,\dots,n-1$ is the subset of points $x\in Sing(u)$ for which 
 $\mathrm{Ker}(\mathbb{B}_x)=k$.
\end{defi}

In the following theorem we show that the set $Sing(u)$ has a stronger regularity property than rectifiabilty: 
we show that the singular stratum $S_k$ is locally contained in a single submanifold.
Moreover that $\cup_{k=l}^{n-1} S_k$ is a closed set for every $l=0,1,\dots,n-1$.

\begin{teo}[{\cite[Theorem 4.14]{FGS}}]\label{teo regolarita pti sing} 
 Assume hypotheses $(H1)$, $(H2)$ and $(H4)$ with $a\geq 1$. Let $x\in S_k$. Then there exists $r$ such that $S_k\cap B_r(x)$ 
 is contained in regular $k$-dimensional submanifold of $\R^n$.
\end{teo}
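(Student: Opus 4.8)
The plan is to follow the Caffarelli--Monneau strategy in the energetic form of \cite[Theorem 4.14]{FGS}, feeding in the two ingredients already established in our setting: Monneau's quasi-monotonicity formula (Theorem \ref{Monneau}, valid under $(H4)$ with $a\geq 1$) and the uniform convergence of rescalings at singular points (Proposition \ref{unicita blowup + mod cont}). It suffices to argue locally, so fix a compact set $K\subset\subset Sing(u)$. For $x_0\in K$ the blow-up is unique and, by the classification at singular points (Theorem \ref{teo Caffarelli}, case (B)), of the form $v_{x_0}(y)=\langle \mathbb{B}_{x_0}\LL^{-1}(x_0)y,\LL^{-1}(x_0)y\rangle=\langle Q_{x_0}y,y\rangle$, where $Q_{x_0}:=\LL^{-T}(x_0)\mathbb{B}_{x_0}\LL^{-1}(x_0)$ is symmetric, positive semidefinite, with $\dim\mathrm{Ker}(Q_{x_0})=\dim\mathrm{Ker}(\mathbb{B}_{x_0})=k$ on $S_k$. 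Undoing the rescaling, Proposition \ref{unicita blowup + mod cont} translates into a uniform second order expansion
\begin{equation*}
 |u(x)-\langle Q_{x_0}(x-x_0),x-x_0\rangle| + |x-x_0|\,|\nabla u(x)-2Q_{x_0}(x-x_0)| \leq |x-x_0|^2\,\sigma_K(|x-x_0|),
\end{equation*}
valid for all $x_0\in K$ and $|x-x_0|<r_K$, where $\sigma_K$ is a modulus of continuity depending only on $K$.

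The second step is to prove that $x\mapsto Q_x$ is continuous on $Sing(u)$, with a modulus uniform on $K$. For $x_0,x_1\in K$ put $\rho=|x_0-x_1|$ and compare the rescalings $u_{\LL(x_0),r}$ and $u_{\LL(x_1),r}$ at the common scale $r=\rho^{1/2}$: they differ by $O(\rho/r)=O(\rho^{1/2})$ in $C^1$ of a fixed ball, while each is $\sigma_K(r)$--close to its blow-up polynomial; since blow-ups are quadratic polynomials, $C^1$ proximity controls their coefficients, and using that $\LL=f^{-1/2}\A^{1/2}$ is continuous (because $\A$ is continuous by $(H2)$ and $f\geq c_0>0$), one obtains $\|Q_{x_0}-Q_{x_1}\|\leq\omega_K(|x_0-x_1|)$ for a modulus $\omega_K$. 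In particular $\mathrm{rank}$ is lower semicontinuous along $Sing(u)$, so $\bigcup_{j\geq l}S_j=\{x\in Sing(u):\dim\mathrm{Ker}(Q_x)\geq l\}$ is relatively closed in $Sing(u)$; since $Sing(u)=\Gamma_u\setminus Reg(u)$ is closed in $\Om$ by Theorem \ref{teo regolarità pti reg}, each $\bigcup_{j\geq l}S_j$ is closed, which is the last assertion preceding the statement.

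The third step is Whitney's extension theorem. On $S_k\cap\overline{B_\tau(x_0)}$ assign to each $x_1$ the $2$-jet $(0,0,2Q_{x_1})$. The Whitney compatibility conditions follow from the uniform expansion together with $u(x_1)=0$ and $\nabla u(x_1)=0$ (which hold since $x_1\in\Gamma_u$ and by \eqref{u e grad u limitate}): these give $\langle Q_{x_0}(x_1-x_0),x_1-x_0\rangle=o(\rho^2)$, $\,2Q_{x_0}(x_1-x_0)=o(\rho)$, and $Q_{x_1}\to Q_{x_0}$ continuously. Whitney's theorem then yields $g\in C^2(\R^n)$ with $g=0$, $\nabla g=0$, $D^2g=2Q_{x_0}$ on $S_k$ near $x_0$, so that $S_k\cap B_\tau(x_0)\subseteq\{\nabla g=0\}$. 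Choosing coordinates so that $\mathrm{Ker}(Q_{x_0})=\mathrm{span}(e_1,\dots,e_k)$, the map $h:=(\partial_{k+1}g,\dots,\partial_n g)$ has $Dh(x_0)$ containing the invertible $(n-k)\times(n-k)$ block $2(Q_{x_0})_{ij}$, $k<i,j\leq n$; the implicit function theorem then shows $\{h=0\}$ is, near $x_0$, a $C^1$ $k$-dimensional submanifold of $\R^n$ (tangent to $\mathrm{Ker}(Q_{x_0})$), and it contains $S_k\cap B_\tau(x_0)$. This is exactly the claim of Definition \ref{d:singular stratum} for $S_k$.

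The main obstacle is the uniform second order expansion and the uniform modulus $\omega_K$ for $x\mapsto Q_x$: both rest on Proposition \ref{unicita blowup + mod cont}, hence ultimately on the Monneau quasi-monotonicity formula (Theorem \ref{Monneau}), where the lack of Lipschitz/Hölder regularity of the coefficients forces one to carry the integrable error terms $r^{-n/\Theta}$, $\omega(r)/r$ and $r^{-1}\int_0^r\omega(s)s^{-1}\,ds$ all the way through the argument; once these are in hand, the Whitney extension and implicit function steps are classical and identical to \cite[Theorem 4.14]{FGS}, which is why no separate proof is needed.
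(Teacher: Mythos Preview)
Your proposal is correct and follows exactly the Caffarelli--Monneau/Whitney-extension strategy that the paper defers to \cite[Theorem 4.14]{FGS} without reproving (the paper gives no proof, stating only that the arguments of \cite{FGS} carry over once Theorem \ref{Monneau} and Proposition \ref{unicita blowup + mod cont} are in hand). One small citation issue: you invoke Theorem \ref{teo regolarità pti reg} for the closedness of $Sing(u)$, but that theorem requires $(H4)$ with $a>2$; the relative openness of $Reg(u)$ in $\Gamma_u$ (hence closedness of $Sing(u)$) already follows under $(H1)$--$(H3)$ from the upper semicontinuity of $x_0\mapsto \Phi_{\LL(x_0)}(0^+)$ granted by Weiss' quasi-monotonicity (Theorem \ref{Weiss}) together with Remark \ref{oss conto energia}.
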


\subsection*{Acknowledgments} The author is grateful and would like to thank Professor Matteo Focardi for his suggestion of the problem 
 and for his constant support and encouragement. The author is partially supported by project GNAMPA $2015$ 
 ``Regolarità per problemi di analisi geometrica e del calcolo delle variazioni''.
 The author is member of the Gruppo Nazionale per l'Analisi Matematica, la Probabilità e le loro Applicazioni (GNAMPA) of
the Istituto Nazionale di Alta Matematica (INdAM).

{\footnotesize

\vspace{0.5cm}
\noindent\textsc{DiMaI, Università degli studi di Firenze}\\
\emph{Current address:} Viale Morgagni 67/A, 50134 Firenze (Italy)\\
\emph{E-mail address:} \texttt{geraci@math.unifi.it}
}

\end{document}